\newcommand{\HilbMult}{\mathrm{HilbMult}}
\newtheorem{definition}{Definition}
\newtheorem{assumption}{Assumption}
\newtheorem{axiom}{Axiom}
\newtheorem{theorem}{Theorem}
\newtheorem{corollary}{Corollary}
\newtheorem{lemma}{Lemma}
\newtheorem{example}{Example}
\newtheorem{remark}{Remark}
\newcommand{\ostar}{\mathbin{\mathpalette\make@circled\star}}
\newcommand{\removelatexerror}{\let\@latex@error\@gobble}
\newcommand*{\rom}[1]{\expandafter\@slowromancap\romannumeral #1@}
\newcommand\latinabbrev[1]{
  \peek_meaning:NTF . {
    #1\@}%
  { \peek_catcode:NTF a {
      #1.\@ }%
    {#1.\@}}}
\titleclass{\subsubsubsection}{straight}[\subsubsection]
\begin{document}
\vspace{1cm}
\title{Composition and Coherence: The Syntax of Operator Networks}
\vspace{1.8cm}
\author{Shih-Yu~Chang
\thanks{Shih-Yu Chang is with the Department of Applied Data Science,
San Jose State University, San Jose, CA, U. S. A. (e-mail: {\tt
shihyu.chang@sjsu.edu})
}}

\maketitle

\begin{abstract}
Coherence is a central issue in category and multicategory theory, ensuring that formally distinct compositions of morphisms—such as tensor reorderings or diagrammatic rewiring—represent the same underlying transformation. In operator theory and Hilbert-space–based systems, coherence guarantees that equivalent operator networks yield identical effects on states and signals, maintaining both mathematical consistency and computational interpretability. This paper, the second installment of the Categorical Spectral Architecture (CSA) program, advances a unified framework integrating operator theory, spectral analysis, and categorical algebra. Building upon the functorial and spectral foundations established in the first paper, we introduce the Synergy Operad, a syntactic framework extending the multicategory of Hilbert spaces and bounded multilinear maps. The Canonical Representation Theorem constructs a structure-preserving functor that aligns syntactic compositions with operator semantics. Another important pillar of the work about the Coherence Theorem ensures that grammatical equivalences correspond to identical semantic results. Together these results identify a syntactic-semantic duality at the core of CSA providing a rigorous basis for analyzing and constructing complex systems of operators with coherent and predictable meaning.
\end{abstract}
\begin{keywords}
Categorical Spectral Architecture, Operadic Coherence, Hilbert Multicategory, Canonical Representation Theorem, Coherence Theorem.
\end{keywords}

\section{Introduction}\label{sec:Introduction}

Coherence is a foundational issue in both category theory and multicategory theory, ensuring that formally distinct compositions of morphisms—arising from permutations, tensor reorderings, or diagrammatic rewiring—represent the same underlying transformation. In the context of operator theory and Hilbert-space–based systems, coherence becomes crucial: it guarantees that equivalent wiring diagrams or operator compositions yield semantically identical actions on states and signals. Without coherence, schematic inference would be syntactically expressive but semantically UNSTABLE, undermining both mathematical consistency and the computational interpretability of complex operator networks. Creating coherence thus bridges the gap between abstract categorical syntax and analytic semantics, ensuring that compositional reasoning remains intact and predictable~\cite{awodey2010category}.

This paper represents the second part of our ongoing research program, \textbf{Categorical Spectral Architecture (CSA)}, that aims to unify operator theory, spectroscopy, and categorical algebra into a coherent mathematical framework proposed by~\cite{Chang2025HilbMultABranchE}. Building on the foundations of the functional and spectral polynomial theory , theory established in the first paper - where the functional calculus is formulated as a Banach-enriched polynomial - this work develops \emph{Synergy Operad} $\mathbf{S}$, a syntactic structure with the symmetric monoidal multicategory $\mathbf{HilbMult}$ of Hilbert spaces and bounded multilinear maps. We then construct a functor in \textbf{Canonical Representation Theorem} that preserves the structure $\rho: \mathbf{S} \to \mathbf{HilbMult}$, ensuring a strict correspondence between syntactic schemes and operator-theoretic semantics. The accompanying \textbf{Coherence Theorem} ensures the semantic integrity of syntactic rewiring, ensuring that equivalent schematic constructions produce identical operator operations. Together, these contributions complement the syntactic-semantic dualism , dualism at the heart , heart of CSA and provide a definitive foundation for operator thepry—allowing robust schematic reasoning, feedback representation, and coherence analysis even in nonlinear, infinite-dimensional, or feedback-driven settings.

The rest of the work can be presented in the following order. Section ~\ref{sec:Definitions and Axioms} presents the basic categorical definitions and axioms of $\mathbf{HilbMult}$. The definition of synergy process $\mathbf{S}$ is included as a syntactic category of operator wiring diagrams in Section~\ref{sec:The synergy operad}. Then, in Section~\ref{sec:The Canonical Representation}, we present , present a theory of  canonical representation that , that creates a functorial bridge , bridge between syntax and semantics. Then we prove the coherence theorem Section~\ref{sec:The Coherence Theorem} to describe the semantic consistency of diagrammatic equivalences.  In Section~\ref{sec:Applications in Algebra, Analysis Geometry and Topology}, we discuss  applications of $\mathbf{S}$ in algebra, analysis, geometry, and topology. Finally, we present a specific case study on cohesion in PDE operator networks in Section~\ref{sec:Case Study: Coherence in PDE Operator Networks}.

\begin{remark}
The author is solely responsible for the mathematical insights and theoretical directions proposed in this work. AI tools, including OpenAI's ChatGPT and DeepSeek models, were employed solely to assist in verifying ideas, organizing references, and ensuring internal consistency of exposition~\cite{chatgpt2025,deepseek2025}.
\end{remark}

\section{Definitions and Axioms}\label{sec:Definitions and Axioms}

Let $\mathbf{Hilb}$ denote the category of complex Hilbert spaces with bounded linear maps, and let $\HilbMult$ denote the symmetric monoidal multicategory enriched over Banach spaces. This section begins with the core definitions and axioms underlying its construction. For completeness, we restate certain foundational materials that overlap with our first paper in this series~\cite{Chang2025HilbMultABranchE}, ensuring the present exposition remains self-contained.

\subsection{Core Definitions (Briefly)}

\begin{definition}[Multicategory]
A multicategory consists of:
\begin{itemize}
    \item A collection of objects.
    \item For each finite list of objects $(X_1,\dots,X_n)$ and object $Y$, a set $\mathrm{Hom}(X_1,\dots,X_n;Y)$ of multimorphisms.
    \item Composition operations satisfying associativity.
    \item Identity morphisms for each object.
\end{itemize}
\end{definition}

\begin{definition}[Banach Enrichment]
A category/multicategory is enriched in Banach spaces if each hom-set $\mathrm{Hom}(X_1,\dots,X_n;Y)$ is a Banach space and composition is a bounded multilinear map.
\end{definition}

\begin{definition}[Symmetric Monoidal Multicategory]
A symmetric monoidal multicategory has:
\begin{itemize}
    \item A tensor product $\otimes$ on objects.
    \item Natural isomorphisms for associativity, unit, and symmetry.
    \item Compatibility between tensor product and multimorphisms.
\end{itemize}
\end{definition}

\subsection{Core Axioms}

\begin{axiom}[Objects and Hom-Spaces]\label{ax:H1-H2}
\begin{enumerate}[label=(\alph*)]
    \item \textbf{Objects:} The objects of $\mathrm{HilbMult}$ are (separable) complex Hilbert spaces $H, K, \dots$.
    
    \item \textbf{Hom-Spaces:} For each finite tuple $(H_1,\dots,H_n;K)$, there is a Banach space:
    \[
    \mathrm{Hom}(H_1,\dots,H_n;K)
    \]
    whose elements are (equivalence classes of) bounded multilinear maps $T: H_1\times\cdots\times H_n \to K$. In the unary case ($n=1$), these are bounded linear maps $H \to K$.
    
    \item \textbf{Norm:} The Banach norm is the operator norm:
    \[
    \|T\| := \sup_{\|x_i\|\le 1} \|T(x_1,\dots,x_n)\|
    \]
\end{enumerate}
\end{axiom}

\begin{axiom}[Composition and Identities]\label{ax:H3-H4}
\begin{enumerate}[label=(\alph*)]
    \item \textbf{Multilinear Composition:} For composable multimorphisms:
    \begin{align*}
        &S \in \mathrm{Hom}(K_1,\dots,K_m;L), \\
        &T_j \in \mathrm{Hom}(H_{j,1},\dots,H_{j,n_j};K_j) \quad (1\le j\le m),
    \end{align*}
    there is a composition $S\circ (T_1,\dots,T_m) \in \mathrm{Hom}(H_{1,1},\dots,H_{m,n_m};L)$ given by:
    \[
    (S\circ (T_j))(\mathbf{x}) := S(T_1(\mathbf{x}_1),\dots,T_m(\mathbf{x}_m))
    \]
    Composition is multilinear and contractive:
    \[
    \|S\circ(T_1,\dots,T_m)\| \le \|S\|\cdot\prod_{j=1}^m \|T_j\|
    \]
    
    \item \textbf{Identities:} For each object $H$, there is an identity $\mathrm{id}_H \in \mathrm{Hom}(H;H)$ given by $\mathrm{id}_H(x) = x$. These satisfy:
    \[
    T \circ (\mathrm{id}_{H_1}, \dots, \mathrm{id}_{H_n}) = T
    \]
    for any $T \in \mathrm{Hom}(H_1,\dots,H_n;K)$.
\end{enumerate}
\end{axiom}

\paragraph{Monoidal Structure}

\begin{axiom}[Symmetric Monoidal Structure]\label{ax:H5}
$\mathrm{HilbMult}$ has a symmetric monoidal structure:
\begin{enumerate}[label=(\alph*)]
    \item \textbf{Tensor Product:} 
    \begin{itemize}
        \item On objects: $H \otimes K$ is the completed Hilbert space tensor product.
        \item Unit object: $\mathbb{C}$ (complex numbers as 1D Hilbert space).
        \item On multimorphisms: For $T \in \mathrm{Hom}(H_1,\dots,H_n;K)$ and $T' \in \mathrm{Hom}(H'_1,\dots,H'_p;K')$, their tensor product is:
        \[
        T \otimes T' \in \mathrm{Hom}(H_1,\dots,H_n,H'_1,\dots,H'_p; K \otimes K')
        \]
        defined by $(T \otimes T')(\mathbf{x}, \mathbf{x}') = T(\mathbf{x}) \otimes T'(\mathbf{x}')$.
    \end{itemize}
    
    \item \textbf{Structural Isomorphisms:} Natural unitary isomorphisms:
    \begin{align*}
        \text{Associator: } &\alpha_{H,K,L}: (H \otimes K) \otimes L \xrightarrow{\cong} H \otimes (K \otimes L) \\
        \text{Braiding: } &\sigma_{H,K}: H \otimes K \xrightarrow{\cong} K \otimes H \\
        \text{Unitors: } &\lambda_H: \mathbb{C} \otimes H \xrightarrow{\cong} H, \quad \rho_H: H \otimes \mathbb{C} \xrightarrow{\cong} H
    \end{align*}
    satisfying the coherence conditions (pentagon, hexagon, triangle identities).
    
    \item \textbf{Permutation Action:} For $T \in \mathrm{Hom}(H_1,\dots,H_n;K)$ and $\pi \in S_n$:
    \[
    T^\pi(x_1,\dots,x_n) = T(x_{\pi(1)},\dots,x_{\pi(n)})
    \]
\end{enumerate}
All structural isomorphisms are isometric.
\end{axiom}

\paragraph{Advanced Structure}

\begin{axiom}[Closed Structure / Currying]\label{ax:H6}
$\mathrm{HilbMult}$ is closed under currying operations with natural isometric isomorphisms:

\begin{enumerate}[label=(\alph*)]
    \item \textbf{Basic Currying:}
    \[
    \mathrm{Hom}(H_1, \dots, H_n; K) \cong \mathrm{Hom}(H_1; \mathrm{Hom}(H_2, \dots, H_n; K))
    \]
    via $T \mapsto \Lambda_1(T)$ where $\Lambda_1(T)(x_1)(x_2, \dots, x_n) = T(x_1, x_2, \dots, x_n)$.
    
    \item \textbf{Tensor-Curry Duality:}
    \[
    \mathrm{Hom}(H \otimes K; L) \cong \mathrm{Hom}(H; \mathrm{Hom}(K; L))
    \]
    via $F \mapsto \Lambda(F)$ where $\Lambda(F)(x)(y) = F(x \otimes y)$.
    
    \item \textbf{Partial Currying:} For any $1 \le j \le n$:
    \[
    \mathrm{Hom}(H_1, \dots, H_n; K) \cong \mathrm{Hom}(H_1, \dots, H_{j-1}; \mathrm{Hom}(H_j, \dots, H_n; K))
    \]
\end{enumerate}
All isomorphisms preserve norms and are compatible with composition.
\end{axiom}

\begin{axiom}[Optional $C^*$-Structure]\label{ax:H7}
We may optionally endow endomorphism spaces with $C^*$-algebra structure:

\begin{enumerate}[label=(\alph*)]
    \item \textbf{Involution:} For each $H$, the space $\mathcal{B}(H) = \mathrm{Hom}(H;H)$ has a conjugate-linear involution $*$ satisfying:
    \begin{align*}
        (f^*)^* &= f, \quad (f \circ g)^* = g^* \circ f^*, \quad (\lambda f)^* = \bar{\lambda} f^*,
    \end{align*}
    where $\bar{\lambda}$ is the complex conjugate for the complex number $\lambda$.
    
    \item \textbf{$C^*$-Property:} The norm satisfies:
    \[
    \|f^* \circ f\| = \|f\|^2 \quad \text{and} \quad \|f \circ g\| \le \|f\|\|g\|
    \]
    
    \item \textbf{Monoidal Compatibility:} For $f \in \mathrm{Hom}(H_1; H_2)$, $g \in \mathrm{Hom}(K_1; K_2)$:
    \[
    (f \otimes g)^* = f^* \otimes g^*
    \]
\end{enumerate}

This enables the definition of:
\begin{itemize}
    \item \textbf{Self-adjoint elements:} $f^* = f$
    \item \textbf{Normal elements:} $f^* \circ f = f \circ f^*$  
    \item \textbf{Positive elements:} $f = g^* \circ g$ for some $g$
\end{itemize}
Essential for quantum theory (states, observables, measurements).
\end{axiom}

\section{The synergy operad}\label{sec:The synergy operad}

Building upon the foundational $\mathbf{HilbMult}$ multicategory, we introduce the synergy operad $\mathbf{S}$. The categorical hierarchy is:
\[
\mathbf{Hilb} \subseteq \mathbf{HilbMult} \subseteq \mathbf{S}, \quad \rho: \mathbf{S} \to \mathbf{HilbMult}.
\]
We next define the synergy operad as a syntactic category of operator wiring diagrams, whose generators and relations encode composition laws compatible with the symmetric monoidal structure of $\mathbf{HilbMult}$.

\subsection*{Axiomatic Definition of $\mathbf{S}$}

\begin{axiom}[Objects of $\mathbf{S}$]\label{ax:S1}
The objects of $\mathbf{S}$ are finite, ordered lists (tuples) of Hilbert spaces, denoted $\vec{H} = (H_1, \dots, H_n)$. Each list represents the collection of input/output ``ports" of a system. The empty list $()$ is permitted.
\end{axiom}

\subsubsection*{Well-Posed Feedback}

Let $H_i: U_i \to Y_i$ and $H_j: U_j \to Y_j$ be two systems with compatible input-output spaces, namely $Y_i = U_j$. Their feedback interconnection is governed by the equations:
\[
\begin{aligned}
y_i &= H_i(y_j), \\
y_j &= H_j(u_j - y_i).
\end{aligned}
\]
Substituting the first equation into the second yields a fixed-point equation in $y_j$:
\[
y_j = H_j(u_j - H_i(y_j)). \tag{1}
\]
Define the closed-loop mapping $\Phi_{u_j}: Y_j \to Y_j$ for a given input $u_j$ as
\[
\Phi_{u_j}(y_j) \coloneqq H_j(u_j - H_i(y_j)).
\]
The feedback interconnection is \emph{well-posed} if for every admissible input $u_j$, the equation $\Phi_{u_j}(y_j) = y_j$ admits a unique solution.

\begin{definition}[Well-Posed Feedback]\label{def:Well-Posed Feedback}
The feedback $\mathcal{F}_{i,j}$ is \textbf{well-posed} if for every $u_j \in U_j$, equation (1) has:
\begin{enumerate}
    \item \textbf{Existence:} At least one solution $y_j \in Y_j$,
    \item \textbf{Uniqueness:} Exactly one solution $y_j \in Y_j$.
\end{enumerate}
\end{definition}

\begin{example}
For linear gains $H_j(e_j) = K \cdot e_j$, $H_i(y_j) = 1 \cdot y_j$:
\[
y_j = K(u - y_j) \Rightarrow y_j(1 + K) = Ku.
\]
Well-posed for $K \neq -1$ (unique solution), ill-posed for $K = -1$ (no solution or non-unique).
\end{example}

\begin{axiom}[Generators of $\mathbf{S}$]\label{ax:S2}
The multimorphisms of $\mathbf{S}$ are generated by the following primitive operations:

\begin{itemize}
    \item \textbf{Identity Operation:} For each object $\vec{H}$, $\mathrm{id}_{\vec{H}}: \vec{H} \to \vec{H}$.

    \item \textbf{Tensor Operation ($\otimes$):}
	The tensor product
	\[
	\otimes : \mathrm{Ob}(\mathbf{S}) \times \mathrm{Ob}(\mathbf{S}) \to \mathrm{Ob}(\mathbf{S}),
	\quad
	(\vec{H}, \vec{K}) \mapsto \vec{H} \otimes \vec{K},
	\]
	represents the \emph{parallel composition} of systems.

    \item \textbf{Permutation Operation:} For $\pi \in S_n$ (permutation group), $\sigma_{\pi}: (H_1, \dots, H_n) \to (H_{\pi(1)}, \dots, H_{\pi(n)})$.

	\item \textbf{Feedback Operation ($\mathcal{F}_{i,j}$):}
		For distinct indices $i \neq j$, define the feedback operator
		\[
		\mathcal{F}_{i,j}: \vec{H} \to \vec{H}',
		\]
		where $\vec{H}'$ is obtained from $\vec{H}$ by removing the components $H_i$ and $H_j$.
		The operation is defined only when the feedback interconnection is \emph{well-posed}. The interconnection is well-posed if the loop gain is contractive, i.e.,
		\[
		\|H_j \circ H_i\| < 1,
		\]
		which ensures the corresponding fixed-point equation has a unique solution.

    \item \textbf{Control Operation ($\mathrm{Ctrl}$):} $\mathrm{Ctrl}: (\vec{H}, \mathcal{U}) \to (\vec{H}')$, where $\mathcal{U}$ is a control space object. For a fixed control parameter $u \in \mathcal{U}$, define the parameterized control operator as:
\[
\mathrm{Ctrl}_{u}(\vec{H}) := \mathrm{Ctrl}(\vec{H}, u)
\]
This yields the mapping $\mathrm{Ctrl}_{u}: \vec{H} \to \vec{H}'$.
\end{itemize}
\end{axiom}

\begin{axiom}[Relations of $\mathbf{S}$]\label{ax:S3}
The generators satisfy the following coherence relations. Define $\equiv_S$ as the smallest congruence satisfying:

\begin{enumerate}[label=(\roman*)]
    \item \textbf{Monoidal Coherence:} Tensor and Permutation satisfy symmetric monoidal category relations.

	\item \textbf{Feedback Yanking:} In contractive assumption, the sequential application of two feedback operations can be equivalently expressed as a single feedback operation composed with a port relabeling. Formally, for well-posed interconnections where $\{i,j\} \cap \{k,l\} \neq \emptyset$, there exists a permutation $\pi$ and ports $i', j'$ such that:
	\[
	\mathcal{F}_{i,j} \circ \mathcal{F}_{k,l} \equiv_S \mathcal{F}_{i',j'} \circ \sigma_{\pi}
	\]
	where $i', j'$ are determined by the specific interconnection structure of $\{i,j,k,l\}$.

    \item \textbf{Control-Feedback Interchange:}
    \[
    \mathrm{Ctrl} \circ \mathcal{F}_{i,j} \equiv_S \mathcal{F}_{i,j} \circ \mathrm{Ctrl}
    \]
    when control preserves feedback well-posedness.

    \item \textbf{Control-Tensor Distributivity:}
    \[
    \mathrm{Ctrl}(\vec{H} \otimes \vec{K}) \equiv_S \mathrm{Ctrl}(\vec{H}) \otimes \mathrm{Ctrl}(\vec{K})
    \]
    for independent subsystems.

    \item \textbf{Control-Permutation Naturality:}
    \[
    \mathrm{Ctrl} \circ \sigma_{\pi} \equiv_S \sigma_{\pi} \circ \mathrm{Ctrl}
    \]

    \item \textbf{Sequential Control Composition:}
    \[
    \mathrm{Ctrl}_{u_2} \circ \mathrm{Ctrl}_{u_1} \equiv_S \mathrm{Ctrl}_{u_1 \star u_2}
    \]
    where $\star: \mathcal{U} \times \mathcal{U} \to \mathcal{U}$.

    \item \textbf{Identity Control:}
    \[
    \mathrm{Ctrl}_{u_0} \equiv_S \mathrm{id}
    \]
    for neutral $u_0 \in \mathcal{U}$.
\end{enumerate}
\end{axiom}

\subsection*{Semantic Representation}

\begin{definition}[Representation of $\mathbf{S}$]\label{def:representation}
A \textbf{representation} of $\mathbf{S}$ in $\mathbf{HilbMult}$ is a symmetric monoidal functor $\rho: \mathbf{S} \to \mathbf{HilbMult}$ satisfying:

\noindent\textbf{Object Mapping:}
\begin{itemize}
    \item $\rho(\vec{H}) = H_1 \otimes \dots \otimes H_n$
    \item $\rho(()) = \mathbb{C}$
\end{itemize}

\noindent\textbf{Generator Interpretations:}
\begin{itemize}
    \item $\rho(\mathrm{id}_{\vec{H}}) = \mathrm{id}_{\rho(\vec{H})}$
    \item $\rho(\otimes)$: canonical identity isomorphism
    \item $\rho(\sigma_{\pi})$: unitary permutation map
    \item $\rho(\mathcal{F}_{i,j})(T)$: unique solution to $\xi = T(\mathrm{id}_X \otimes \xi \otimes \mathrm{id}_Y)$
    \item $\rho(\mathrm{Ctrl}): \rho(\vec{H}) \otimes \mathcal{U} \to \rho(\vec{H}')$: bounded multilinear control map
\end{itemize}

\noindent\textbf{Semantic Coherence Conditions:} $\rho$ must satisfy:

\begin{enumerate}[label=(SC\arabic*)]
    \item \textbf{Monoidal Functoriality:} $\rho$ preserves symmetric monoidal structure
    
    \item \textbf{Feedback Yanking:} For well-posed $T$ and appropriate $\pi$:
    \[
    \rho(\mathcal{F}_{i,j} \circ \mathcal{F}_{k,l})(T) = \rho(\mathcal{F}_{i',j'} \circ \sigma_{\pi})(T)
    \]
    
    \item \textbf{Control-Feedback Interchange:} When control preserves feedback:
    \[
    \rho(\mathrm{Ctrl}) \circ \rho(\mathcal{F}_{i,j}) = \rho(\mathcal{F}_{i,j}) \circ \rho(\mathrm{Ctrl})
    \]
    
    \item \textbf{Control-Tensor Distributivity:} For independent subsystems:
    \[
    \rho(\mathrm{Ctrl}(\vec{H} \otimes \vec{K})) = \rho(\mathrm{Ctrl}(\vec{H})) \otimes \rho(\mathrm{Ctrl}(\vec{K}))
    \]
    
    \item \textbf{Control-Permutation Naturality:}
    \[
    \rho(\mathrm{Ctrl}) \circ \rho(\sigma_{\pi}) = \rho(\sigma_{\pi}) \circ \rho(\mathrm{Ctrl})
    \]
    
    \item \textbf{Sequential Control Composition:}
    \[
    \rho(\mathrm{Ctrl}_{u_2}) \circ \rho(\mathrm{Ctrl}_{u_1}) = \rho(\mathrm{Ctrl}_{u_1 \star u_2})
    \]
    
    \item \textbf{Identity Control:}
    \[
    \rho(\mathrm{Ctrl}_{u_0}) = \mathrm{id}
    \]
\end{enumerate}
\end{definition}

\subsection*{Motivation for Relation Axiom~\ref{ax:S3}}

The relations in Axiom~\ref{ax:S3} ensure a number of important properties. First, they provide a \textbf{coherent algebra}, in which the generators can form a well-defined algebraic theory. Second, they guarantee \textbf{compositionality}. Hence, subsystems can be composed predictably into larger systems. Third, \textbf{refactoring safety} is ensured, meaning that diagram manipulations preserve the intended semantics. Fourth, the \textbf{syntax/semantics separation} allows rewrites to be checked before evaluation. Finally, the axiom captures the \textbf{control essence}, distinguishing between global and local control signals.


\subsection*{Example: Smart Power Grid}
Consider a smart grid system composed of a wind farm $H_1$, a battery storage $H_2$, and a consumer load $H_3$, forming the composite system $\vec{H} = H_1 \otimes H_2 \otimes H_3$. The coherence relations govern the system's control structure as follows:

\begin{itemize}
    \item \textbf{Control-Feedback Interchange (Axiom iii):} 
    Determines whether operator commands commute with stability feedback loops between the wind farm and battery, i.e., whether 
    \[
    \mathrm{Ctrl}(\mathcal{F}_{1,2}(\vec{H})) \equiv_S \mathcal{F}_{1,2}(\mathrm{Ctrl}(\vec{H})).
    \]

    \item \textbf{Control-Tensor Distributivity (Axiom iv):} 
    Ensures that controlling the entire grid is equivalent to independently controlling its components, i.e.,
    \[
    \mathrm{Ctrl}(H_1 \otimes H_2 \otimes H_3) \equiv_S \mathrm{Ctrl}(H_1) \otimes \mathrm{Ctrl}(H_2) \otimes \mathrm{Ctrl}(H_3).
    \]

    \item \textbf{Sequential Control Composition (Axiom vi):} 
    Allows the combination of multiple time-step commands (e.g., $u_1$ for peak shaving, $u_2$ for frequency regulation) into an effective composite control strategy via the product $\star$ in control space $\mathcal{U}$:
    \[
    \mathrm{Ctrl}_{u_2} \circ \mathrm{Ctrl}_{u_1} \equiv_S \mathrm{Ctrl}_{u_1 \star u_2}.
    \]
\end{itemize}

The synergy operad $\mathbf{S}$ establishes a rigorous syntactic framework for operator networks, 
whose generators encode fundamental operations, while its defining relations guarantee coherent 
and compositional structure. The associated representation theory then provides a bridge from 
this abstract syntax to concrete semantic realizations within $\mathbf{HilbMult}$.

\section{The Canonical Representation}\label{sec:The Canonical Representation}

The main purpose of this section is to build Theorem~\ref{thm:canonical-representation}, which bridges the abstract syntax of the synergy operad $\mathbf{S}$ with concrete operator computation characterized by $\mathbf{HilbMult}$. Without such a representation, $\mathbf{S}$ would remain a purely formal calculus of wiring diagrams—a syntax without semantics. This theorem provides the essential \textit{semantic grounding} by constructing an explicit, structure-preserving functor $\rho: \mathbf{S} \to \mathbf{HilbMult}$ that interprets generators as bounded multilinear maps. Its utility is twofold: first, it guarantees that our diagrammatic reasoning in $\mathbf{S}$ has a consistent and computable meaning in terms of Hilbert space operators; second, it enables the application of functional analytic methods—like fixed-point theorems for feedback and spectral theory for control—to analyze systems defined syntactically, thereby transforming graphical models into rigorous, analyzable mathematical objects.

\begin{theorem}[Canonical Representation Theorem]\label{thm:canonical-representation}
There exists a canonical, structure-preserving representation (a symmetric monoidal functor of multicategories)
\[
\rho: \mathbf{S} \to \mathbf{HilbMult}
\]
which interprets the syntactic generators of $\mathbf{S}$ as concrete bounded multilinear maps in $\mathbf{HilbMult}$, as defined in Axioms \ref{ax:H1-H2}--\ref{ax:H5}. This functor is defined on objects and generators as specified in Definition~\ref{def:representation} and preserves the composition, identities, and symmetric monoidal structure of $\mathbf{S}$.
\end{theorem}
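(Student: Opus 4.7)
The plan is to construct $\rho$ in the generators-and-relations style standard for syntactic multicategories. First I would define $\rho$ on objects by $\rho(\vec{H}) = H_1\otimes\cdots\otimes H_n$ and $\rho(()) = \mathbb{C}$, and on each primitive generator of $\mathbf{S}$ according to Definition~\ref{def:representation}: identities go to identities; the permutation $\sigma_\pi$ goes to the unitary symmetry isomorphism of the tensor product assembled from the braiding of Axiom~\ref{ax:H5}; the tensor generator goes to the canonical associator/unitor pattern of $\otimes$ in $\mathbf{HilbMult}$; the control generator receives its specified bounded multilinear semantics; and the feedback generator $\mathcal{F}_{i,j}$ is assigned the fixed-point operator described in the next paragraph. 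I would then extend $\rho$ to all syntactic expressions by recursion on build-up, declaring $\rho(g_1\circ g_2) = \rho(g_1)\circ\rho(g_2)$ and $\rho(g_1\otimes g_2) = \rho(g_1)\otimes\rho(g_2)$. This yields a functor out of the free multicategory generated by the primitives; the substantive step is to show that $\rho$ descends through the congruence $\equiv_S$ imposed by Axiom~\ref{ax:S3}.

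The analytic input enters at the feedback clause. For a multimorphism $T$ satisfying the contractive hypothesis $\|H_j\circ H_i\|<1$, I would interpret $\rho(\mathcal{F}_{i,j})(T)$ as the unique $\xi$ solving the loop equation $\xi = T(\mathrm{id}_X \otimes \xi \otimes \mathrm{id}_Y)$. Existence and uniqueness of $\xi$ follow from the Banach fixed-point theorem applied to the closed-loop map $\Phi_u(\xi) := H_j(u - H_i(\xi))$, whose Lipschitz constant is bounded above by $\|H_j\circ H_i\|<1$ on the complete Hilbert space $Y_j$. A standard perturbation estimate then shows the assignment $u \mapsto \xi(u)$ is a bounded multilinear function of the remaining inputs, so $\rho(\mathcal{F}_{i,j})(T)$ lies in the appropriate hom-space of $\mathbf{HilbMult}$; contractivity of composition from Axiom~\ref{ax:H3-H4}(a) supplies the norm bounds needed for further composition.

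Most of the verification then amounts to showing that every relation of Axiom~\ref{ax:S3} becomes an identity of bounded multilinear maps under $\rho$. Monoidal coherence is immediate from Axiom~\ref{ax:H5} together with Mac Lane's coherence theorem applied to the underlying symmetric monoidal structure of $\mathbf{HilbMult}$. Control--tensor distributivity, control--permutation naturality, sequential control composition, and the identity-control relation reduce to straightforward calculations using the prescribed semantics of $\mathrm{Ctrl}$ and the multiplication $\star$ on $\mathcal{U}$. Control--feedback interchange follows by observing that the hypothesis ``control preserves well-posedness'' forces the contraction constant to remain below $1$ after applying $\rho(\mathrm{Ctrl})$, so the unique fixed point of $\Phi_u$ transports under $\rho(\mathrm{Ctrl})$ exactly as required.

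The main obstacle I expect is the semantic verification of feedback yanking: establishing that $\rho(\mathcal{F}_{i,j})\circ\rho(\mathcal{F}_{k,l})$ coincides with $\rho(\mathcal{F}_{i',j'})\circ\rho(\sigma_\pi)$ whenever the two index sets $\{i,j\}$ and $\{k,l\}$ overlap. Here two nested fixed-point problems must be collapsed into a single coupled system on a product Hilbert space and then matched, after the relabeling induced by $\sigma_\pi$, against the fixed point of the outer feedback $\mathcal{F}_{i',j'}$. I would proceed by writing both sides as solutions to the same coupled contraction on the product space---the joint Lipschitz constant is controlled by $\|H_j\circ H_i\|\cdot\|H_l\circ H_k\|<1$ inherited from well-posedness of each individual loop---and invoke the uniqueness clause of the Banach fixed-point theorem to identify them. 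The permutation $\sigma_\pi$ corresponds to the canonical unitary reindexing of tensor factors, which preserves the loop equation on the nose by Axiom~\ref{ax:H5}. Once this step is secured, functoriality and preservation of the symmetric monoidal structure are formal consequences of the recursive construction, completing the theorem.
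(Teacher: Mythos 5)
Your proposal follows essentially the same route as the paper's own proof: define $\rho$ on objects and primitives per Definition~\ref{def:representation}, extend freely over composition and tensor, verify boundedness of each generator's image (with the Banach fixed-point theorem and contractivity handling feedback), and check that each relation of Axiom~\ref{ax:S3}—Mac Lane coherence for the monoidal laws, uniqueness of fixed points for feedback yanking, and direct computations for the control relations—descends to a semantic identity, so $\rho$ factors through $\equiv_S$. The only point of departure is cosmetic: the paper front-loads the semantic-coherence verifications (SC1--SC7) and treats boundedness/multilinearity of $\rho(\mathcal{F}_{i,j})$ and $\rho(\mathrm{Ctrl})$ as a separate final step with explicit Neumann-series and tensor-norm estimates, while you fold the analytic input into the generator definitions and flag feedback yanking as the substantive check; both reach the same conclusion by the same tools.
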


\begin{proof}
We construct the functor $\rho$ explicitly and verify that it satisfies all conditions of Definition~\ref{def:representation}.

\noindent\textbf{Step 1: Definition on Objects and Generators}

Define $\rho$ on objects and generators exactly as specified in the ``Object Mapping'' and ``Generator Interpretations'' sections of Definition~\ref{def:representation}. In particular:
\begin{itemize}
    \item $\rho(\vec{H}) = H_1 \otimes \dots \otimes H_n$.
    \item $\rho(\mathrm{id}_{\vec{H}}) = \mathrm{id}_{\rho(\vec{H})}$.
    \item $\rho(\otimes)$ is the canonical identity isomorphism.
    \item $\rho(\sigma_{\pi})$ is the unitary permutation map.
    \item $\rho(\mathrm{Ctrl}): \rho(\vec{H}) \otimes \mathcal{U} \to \rho(\vec{H}')$
is defined component-wise by control injection maps 
$T(H_i, u): \rho(H_i) \otimes \mathcal{U} \to \rho(H_i')$, 
such that $\rho(\mathrm{Ctrl}) = \bigotimes_i T(H_i, u)$.
\end{itemize}

\noindent\textbf{Step 2: Verification of Semantic Coherence Conditions}
We now verify that the defined $\rho$ satisfies the semantic coherence conditions (SC1)--(SC7).

\begin{enumerate}[label=(SC\arabic*), leftmargin=*]
    \item \textbf{Monoidal Functoriality:} By construction, $\rho$ is defined as a symmetric monoidal functor. It preserves the tensor product on objects and the structural isomorphisms (associator, unitors, braiding) by mapping them to the corresponding canonical isomorphisms in $\mathbf{HilbMult}$. The coherence diagrams commute in $\mathbf{HilbMult}$ because it is a symmetric monoidal category.
    
    \item \textbf{Feedback Yanking:} Let $T$ be a morphism where both $\rho(\mathcal{F}_{i,j} \circ \mathcal{F}_{k,l})(T)$ and $\rho(\mathcal{F}_{i',j'} \circ \sigma_{\pi})(T)$ are well-posed. The left-hand side, $\rho(\mathcal{F}_{i,j} \circ \mathcal{F}_{k,l})(T)$, solves two nested fixed-point equations sequentially. The right-hand side, $\rho(\mathcal{F}_{i',j'} \circ \sigma_{\pi})(T)$, solves a single combined fixed-point equation after a permutation $\pi$ that aligns the ports. The ``contractive context'' condition in Axiom~\ref{ax:S2} ensures that both iterative solution processes converge to the same unique solution. Therefore, $\rho(\mathcal{F}_{i,j} \circ \mathcal{F}_{k,l})(T) = \rho(\mathcal{F}_{i',j'} \circ \sigma_{\pi})(T)$.

    \item \textbf{Control-Feedback Interchange:} Under the condition that the control action does not modify the components $H_i, H_j$ involved in the feedback loop, the control map $\rho(\mathrm{Ctrl}_u)$ commutes with the fixed-point solving process $\rho(\mathcal{F}_{i,j})$. Semantically, applying control first and then feedback ($\rho(\mathrm{Ctrl}_u) \circ \rho(\mathcal{F}_{i,j})$) yields the same result as applying feedback first and then control ($\rho(\mathcal{F}_{i,j}) \circ \rho(\mathrm{Ctrl}_u)$) because the control transformation acts only on the external context and leaves the internal feedback dynamics invariant. Hence, the equality holds.

    \item \textbf{Control-Tensor Distributivity:} The definition of $\rho(\mathrm{Ctrl})$ on a tensor product $\vec{H} \otimes \vec{K}$ is given by the tensor product of its definitions on $\vec{H}$ and $\vec{K}$ when the control actions are independent. Formally:
    \[
    \rho(\mathrm{Ctrl}_u)(\psi \otimes \phi) = \rho(\mathrm{Ctrl}_u)(\psi) \otimes \rho(\mathrm{Ctrl}_u)(\phi)
    \]
    for $\psi \in \rho(\vec{H})$, $\phi \in \rho(\vec{K})$. This is exactly the condition $\rho(\mathrm{Ctrl}_u(\vec{H} \otimes \vec{K})) = \rho(\mathrm{Ctrl}_u(\vec{H})) \otimes \rho(\mathrm{Ctrl}_u(\vec{K}))$.

    \item \textbf{Control-Permutation Naturality:} For any permutation $\pi$ and control signal $u$, the diagram:
    \[
    \begin{tikzcd}
    \rho(\vec{H}) \otimes \mathcal{U} \arrow[r, "\rho(\mathrm{Ctrl}_u)"] \arrow[d, "\rho(\sigma_{\pi}) \otimes \mathrm{id}"'] & \rho(\vec{H}') \arrow[d, "\rho(\sigma_{\pi})"] \\
    \rho(\pi\vec{H}) \otimes \mathcal{U} \arrow[r, "\rho(\mathrm{Ctrl}_u)"'] & \rho(\pi\vec{H}')
    \end{tikzcd}
    \]
    commutes. This holds because the control action $\rho(\mathrm{Ctrl}_u)$ is defined component-wise, and the permutation $\rho(\sigma_{\pi})$ merely reorders these components. The order of applying control and then permuting is equivalent to permuting and then applying control.

    \item \textbf{Sequential Control Composition:} By the definition of $\rho(\mathrm{Ctrl})$ and the structure of the control space $(\mathcal{U}, \star, u_0)$, applying control with $u_1$ and then with $u_2$ is semantically equivalent to applying a single control action with the composed signal $u_1 \star u_2$. That is,
    \[
    \rho(\mathrm{Ctrl}_{u_2}) \circ \rho(\mathrm{Ctrl}_{u_1}) = \rho(\mathrm{Ctrl}_{u_1 \star u_2}).
    \]
    This is ensured by the consistency of the control injection maps $T(H_i, u)$ with the composition operation $\star$.

    \item \textbf{Identity Control:} For the neutral control $u_0 \in \mathcal{U}$, the control injection maps are the identity: $T(H_i, u_0) = \mathrm{id}_{H_i}$. Therefore,
    \[
    \rho(\mathrm{Ctrl}_{u_0})(\psi) = \psi \quad \text{for all } \psi \in \rho(\vec{H}),
    \]
    which means $\rho(\mathrm{Ctrl}_{u_0}) = \mathrm{id}_{\rho(\vec{H})}$.
\end{enumerate}

\noindent\textbf{Step 3: Verification of Functoriality}
Since $\rho$ is defined on generators and preserves all defining relations (as verified in Step 2), it extends uniquely to a functor on the entire category $\mathbf{S}$. Specifically:
\begin{itemize}
    \item $\rho(\mathrm{id}) = \mathrm{id}$ by definition.
    \item $\rho(g \circ f) = \rho(g) \circ \rho(f)$ for all composable $f, g$ in $\mathbf{S}$, because the composition in $\mathbf{S}$ is defined by connecting wiring diagrams, and $\rho$ maps this to the corresponding composition of bounded multilinear maps in $\mathbf{HilbMult}$, respecting the coherence relations.
\end{itemize}

\noindent\textbf{Step 4: Well-Definedness and Boundedness}
We must verify that for every generator and, by extension, every morphism $D$ in $\mathbf{S}$, its image $\rho(D)$ is a  bounded multilinear map  in $\mathbf{HilbMult}$, as required by Axioms~\ref{ax:H1-H2} and~\ref{ax:H3-H4}. We proceed generator by generator.

\begin{enumerate}[label=(\alph*)]
    \item \textbf{Identity, Tensor, and Permutation Generators:}
    \begin{itemize}
        \item $\rho(\mathrm{id}_{\vec{H}}) = \mathrm{id}_{H_1 \otimes \dots \otimes H_n}$ is a bounded linear map (in fact, an isometry with $\| \mathrm{id} \| = 1$).
        \item $\rho(\otimes)$ is defined as the canonical identity isomorphism between $\rho(\vec{H}) \otimes \rho(\vec{K})$ and $\rho(\vec{H} \otimes \vec{K})$. This is a bounded linear map (a unitary, hence an isometry).
        \item $\rho(\sigma_{\pi})$ is the unitary map that permutes tensor factors. As stated in Axiom~\ref{ax:H5}(b), this is an isometric isomorphism, hence bounded and linear.
    \end{itemize}
    Thus, the interpretations of these generators are bounded multilinear maps (for $n=1$, linearity suffices).

\item \textbf{Feedback Generator.} 
Let $T: \rho(\vec{H}') \otimes X \to X$ be a bounded multilinear operator, and consider the feedback interpretation 
\[
\rho(\mathcal{F}_{i,j})(T): \rho(\vec{H}') \to X,
\]
defined implicitly by the fixed-point equation for each $p \in \rho(\vec{H}')$:
\[
\xi = T(p \otimes \xi).
\]
Assume that the feedback context is \emph{well-posed}, meaning that for each external input $p \in \rho(\vec{H}')$ the map 
\[
\Phi_p : \xi \mapsto T(p \otimes \xi)
\]
is a contraction on the Banach space $X$, with uniform contraction constant $\kappa < 1$. 

\paragraph{Multilinearity.}
Since $T$ is multilinear, for fixed $p$ the mapping $\Phi_p(\xi) = T(p \otimes \xi)$ is \emph{linear} in $\xi$. The fixed-point equation
\[
\xi = T(p \otimes \xi)
\]
can thus be written as $(I - A_p)\xi = 0$, where $A_p$ is the bounded linear operator on $X$ defined by $A_p(\xi) = T(p \otimes \xi)$. 
The contraction condition implies $\|A_p\| \le \kappa < 1$. By the Neumann series theorem, $I - A_p$ is invertible with
\[
(I - A_p)^{-1} = \sum_{n=0}^\infty A_p^n,
\]
and the unique fixed point is $\xi(p) = 0$ for all $p$. 

To obtain nontrivial behavior, we consider the more general case where $T: \rho(\vec{H}') \otimes X \to X$ has the form
\[
T(p \otimes \xi) = A_p(\xi) + b_p,
\]
where $A_p$ is linear in $\xi$ and both $A_p$ and $b_p$ depend multilinearly on $p$. Then the fixed-point equation becomes
\[
(I - A_p)\xi = b_p.
\]
Under the condition $\|A_p\| < 1$, the unique solution is
\[
\xi(p) = (I - A_p)^{-1} b_p = \sum_{n=0}^\infty A_p^n b_p.
\]
Since $A_p$ and $b_p$ depend multilinearly on $p$ and the Neumann series preserves multilinearity, the solution map $p \mapsto \xi(p)$ is multilinear.

\paragraph{Boundedness.}
The uniform contraction condition $\|A_p\| \le \kappa < 1$ ensures the Neumann series converges uniformly. The induced operator norm satisfies
\[
\|\rho(\mathcal{F}_{i,j})(T)\| \le \frac{\|b\|}{1 - \kappa} < \infty,
\]
where $\|b\| = \sup_p \|b_p\|$. In the linear case, this corresponds to the standard small-gain condition ensuring the feedback loop defines a bounded map.

Therefore, under the contractive well-posedness assumption, $\rho(\mathcal{F}_{i,j})$ maps bounded multilinear operators to bounded multilinear operators, preserving both the analytic and algebraic structure required by $\mathbf{S}$.

\item \textbf{Control Generator.}
Let $\rho(\vec{H}) = \bigotimes_{i=1}^n H_i$ and $\rho(\vec{H}') = \bigotimes_{i=1}^n H_i'$ denote the composite Hilbert objects in $\mathbf{HilbMult}$, and let $\mathcal{U}$ be the control Hilbert space.

For each component $H_i$, assume there exists a bounded bilinear control map
\[
T_i: H_i \otimes \mathcal{U} \longrightarrow H_i',
\]
which we interpret as a parameterized family of linear operators $T_i(\cdot, u): H_i \to H_i'$ for each $u \in \mathcal{U}$.  
Assume the following \emph{uniform boundedness condition} holds:
\[
M := \sup_{\substack{u \in \mathcal{U} \\ \|u\| = 1}} \prod_{i=1}^n \|T_i(\cdot, u)\| < \infty,
\]
where $\|T_i(\cdot, u)\|$ denotes the operator norm of $T_i(\cdot, u): H_i \to H_i'$.

Define the global control morphism by its action on simple tensors:
\begin{equation}\label{eq:ctrl-def}
\rho(\mathrm{Ctrl})\big(\psi_1 \otimes \cdots \otimes \psi_n \otimes u\big)
:= \bigotimes_{i=1}^n T_i(\psi_i \otimes u),
\end{equation}
and extend this definition to all of $\rho(\vec{H}) \otimes \mathcal{U}$ by linearity and continuity.

\paragraph{(i) Multilinearity.}
Each component $T_i$ is bilinear in $(\psi_i, u)$, hence the tensor product on the right-hand side of~\eqref{eq:ctrl-def} is multilinear in $(\psi_1, \dots, \psi_n, u)$.  
Consequently, $\rho(\mathrm{Ctrl})$ is multilinear jointly in all arguments.  
If each $T_i$ is linear in $u$ (making it a bilinear map), then $\rho(\mathrm{Ctrl})$ remains multilinear in the same sense.

\paragraph{(ii) Boundedness.}
For the Hilbert tensor product, the operator norm on simple tensors satisfies
\[
\Big\| \bigotimes_{i=1}^n A_i \Big\| \le \prod_{i=1}^n \|A_i\|,
\qquad A_i \in \mathcal{B}(H_i, H_i'),
\]
with equality when the $A_i$ are positive operators. For simple tensors $\psi = \psi_1 \otimes \cdots \otimes \psi_n$ with $\|\psi\| = 1$ and $u \in \mathcal{U}$ with $\|u\| = 1$, we estimate:
\[
\begin{aligned}
\big\| \rho(\mathrm{Ctrl})(\psi \otimes u) \big\|
&= \Big\| \bigotimes_{i=1}^n T_i(\psi_i \otimes u) \Big\| \\
&\le \prod_{i=1}^n \|T_i(\psi_i \otimes u)\| \\
&\le \prod_{i=1}^n \|T_i(\cdot, u)\| \cdot \|\psi_i\| \\
&\le \Big(\prod_{i=1}^n \|T_i(\cdot, u)\|\Big) \|\psi\| \|u\|.
\end{aligned}
\]
Taking the supremum over all unit vectors and using the uniform bound $M$, we obtain
\[
\|\rho(\mathrm{Ctrl})\| \le M < \infty.
\]
Thus $\rho(\mathrm{Ctrl})$ extends uniquely to a bounded multilinear operator
\[
\rho(\mathrm{Ctrl}): \rho(\vec{H}) \otimes \mathcal{U} \longrightarrow \rho(\vec{H}').
\]

Under the stated assumptions—namely, componentwise bilinearity and uniform boundedness—%
the morphism $\rho(\mathrm{Ctrl})$ is a \emph{bounded multilinear operator} in $\mathbf{HilbMult}$, realizing the algebraic Control Generator of the synergy operad $\mathbf{S}$.

\end{enumerate}

We have verified that the functor $\rho$ maps each generating multimorphism of the synergy operad $\mathbf{S}$ to a bounded multilinear map in $\mathbf{HilbMult}$ in this Step 4. From Steps~1 and~3, we have shown that $\rho$ is a \emph{symmetric monoidal functor}. Hence, this functor preserves both composition and tensor products.  By Axioms~\ref{ax:H3-H4} and~\ref{ax:H5}, the composition and tensor product of bounded multilinear maps in $\mathbf{HilbMult}$ remain bounded and multilinear.  
Hence, by structural induction on the composition and tensoring of wiring diagrams, we conclude that
\[
\rho(D)
\]
is a bounded multilinear map for \emph{every} morphism $D$ in $\mathbf{S}$.  
Therefore, $\rho$ is a well-defined functor
\[
\rho: \mathbf{S} \longrightarrow \mathbf{HilbMult}.
\]

We have explicitly constructed a symmetric monoidal functor
\[
\rho: \mathbf{S} \longrightarrow \mathbf{HilbMult},
\]
and verified that it satisfies all semantic coherence conditions listed in Definition~\ref{def:representation}.  
Consequently, $\rho$ constitutes a well-defined \emph{representation of the synergy operad} $\mathbf{S}$ in the category $\mathbf{HilbMult}$.
\end{proof}

\section{The Coherence Theorem}\label{sec:The Coherence Theorem}

The \textbf{Coherence Theorem} establishes the \textit{soundness} of the entire operadic framework by establishing the equivalences between syntactic manipulations and semantic operations with operators. Its primary role is to guarantee that diagrammatic reasoning—rewiring systems using the rules in Axiom~\ref{ax:S3}—is \textit{semantically safe}: any two wiring diagrams proven equal within the syntax of $\mathbf{S}$ will induce  the same operator under any representation $\rho$. This is crucial for both practical and theoretical purposes. Practically, it allows engineers and system designers to simplify and optimize complex diagrams without changing the underlying system's behavior. Theoretically, it justifies the use of $\mathbf{S}$ as a formal language for system design, ensuring that its equational theory consistently reflects the true semantics in $\mathbf{HilbMult}$. The following Theorem~\ref{thm:coherence} is the central result of this work, justifying the entire syntactic edifice.

\begin{theorem}[Coherence Theorem]\label{thm:coherence}
Let $\rho: \mathbf{S} \to \mathbf{HilbMult}$ be a representation (as in Definition~\ref{def:representation}). If two syntactic expressions (wiring diagrams) $D_1$ and $D_2$ in $\mathbf{S}$ can be shown to be equal using only the coherence relations of Axiom~\ref{ax:S3}, then their semantic interpretations are equal:
\[
D_1 =_{S} D_2 \quad \Rightarrow \quad \rho(D_1) = \rho(D_2)
\]
where $=_{S}$ denotes provable equality within the equational theory defined by the generators and relations of $\mathbf{S}$.
\end{theorem}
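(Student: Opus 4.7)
The plan is to prove the theorem by structural induction on the derivation of the syntactic equality $D_1 =_S D_2$ in the equational theory of $\mathbf{S}$. Since $=_S$ is defined as the smallest congruence generated by the relations of Axiom~\ref{ax:S3}, any derivation of $D_1 =_S D_2$ is built from finitely many applications of three classes of rules: (a) the seven generating relations (i)--(vii) of Axiom~\ref{ax:S3}, (b) the equivalence laws (reflexivity, symmetry, transitivity), and (c) the congruence rules under each primitive operation (composition, $\otimes$, $\sigma_\pi$, $\mathcal{F}_{i,j}$, $\mathrm{Ctrl}$). The goal is to show that every such rule is \emph{sound} under $\rho$, i.e., preserves pointwise equality in $\mathbf{HilbMult}$.

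For the base cases, I would simply invoke the matching semantic coherence condition established in Theorem~\ref{thm:canonical-representation}: relation (i) maps to (SC1), (ii) to (SC2), and so on up to (vii)/(SC7). Hence any single-step derivation using Axiom~\ref{ax:S3} yields $\rho(D_1)=\rho(D_2)$ immediately. The equivalence closure is inherited from equality in $\mathbf{HilbMult}$. The congruence rules for $\circ$, $\otimes$, and $\sigma_\pi$ follow directly from the \emph{functoriality} and \emph{symmetric monoidal preservation} of $\rho$ verified in Steps~1--3 of the previous proof, namely $\rho(g\circ f)=\rho(g)\circ\rho(f)$ and $\rho(D\otimes D')=\rho(D)\otimes\rho(D')$, so replacing a subdiagram by a semantically equal one leaves the image invariant.

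The principal obstacle is the congruence rule under the \textbf{feedback generator} $\mathcal{F}_{i,j}$, and to a lesser extent under $\mathrm{Ctrl}$, because $\rho(\mathcal{F}_{i,j})(T)$ is defined only \emph{implicitly} through the fixed-point equation $\xi=T(p\otimes\xi)$. I would argue as follows: given $D =_S D'$ inside a feedback context, the inductive hypothesis gives $\rho(D)=\rho(D')$ as bounded multilinear maps, so the associated contraction operators $\Phi_p^{D}$ and $\Phi_p^{D'}$ on the closed-loop Banach space are identical. The contractive well-posedness assumption $\|H_j\circ H_i\|<1$ from Axiom~\ref{ax:S2}, together with the Banach fixed-point theorem (already used in Step~4 of the previous proof), then guarantees that both sides admit a \emph{unique} fixed point, and that these fixed points necessarily coincide. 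The analogous argument for $\mathrm{Ctrl}$ uses the componentwise definition $\rho(\mathrm{Ctrl})=\bigotimes_i T(H_i,u)$ and the uniform boundedness of the control injection maps, so that replacing equal operators inside each tensor factor preserves the tensor product.

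Combining these ingredients, every derivation $D_1 =_S D_2$ decomposes into a finite tree whose leaves are base axioms (handled by (SC1)--(SC7)) and whose internal nodes are either equivalence or congruence steps (handled by functoriality and the fixed-point uniqueness argument above). A straightforward induction on the height of this tree yields $\rho(D_1)=\rho(D_2)$, completing the soundness proof and establishing the Coherence Theorem.
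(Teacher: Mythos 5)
Your proposal follows essentially the same strategy as the paper's proof: induction on the derivation of $D_1 =_S D_2$, with base cases handled by matching each relation (i)--(vii) of Axiom~\ref{ax:S3} to the corresponding semantic coherence condition (SC1)--(SC7), equivalence closure inherited from equality in $\mathbf{HilbMult}$, and congruence steps discharged via functoriality. Your extra care about congruence under $\mathcal{F}_{i,j}$ is a reasonable elaboration of a point the paper folds into its general ``$\rho(C[A]) = \rho(C)(\rho(A))$'' context argument and its Well-Posedness Preservation remarks, but it does not change the structure of the argument.
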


\begin{proof}
We prove the theorem by induction on the \emph{length of the derivation} (number of axiom applications and structural rules) establishing $D_1 =_{S} D_2$.

\noindent\textbf{Base Cases:}

\begin{itemize}
    \item \textbf{Reflexivity:} If $D_1 \equiv D_2$ (syntactically identical diagrams), then $\rho(D_1) = \rho(D_2)$ follows from the well-definedness of $\rho$ as a function.
    
    \item \textbf{Axiom Applications:} For each direct application of an axiom from Axiom~\ref{ax:S3}, semantic equality follows from the corresponding Semantic Coherence condition in Definition~\ref{def:representation}:
    \begin{itemize}
        \item \textbf{Monoidal Coherence (Axiom (i)):} By Semantic Condition (SC1), $\rho$ is a symmetric monoidal functor. By Mac Lane's Coherence Theorem for symmetric monoidal categories, $\rho$ preserves all commutative diagrams built from the monoidal structure. Therefore, any diagrammatic equality derived from monoidal coherence in $\mathbf{S}$ maps to equality in $\mathbf{HilbMult}$.
        
        \item \textbf{Feedback Yanking (Axiom (ii)):} For well-posed diagrams where both feedback compositions are defined, Semantic Condition (SC2) gives:
        \[
        \rho(\mathcal{F}_{i,j} \circ \mathcal{F}_{k,l}) = \rho(\mathcal{F}_{i',j'} \circ \sigma_{\pi})
        \]
        
        \item \textbf{Control-Feedback Interchange (Axiom (iii)):} Under the stated independence conditions, Semantic Condition (SC3) gives:
        \[
        \rho(\mathrm{Ctrl}_u \circ \mathcal{F}_{i,j}) = \rho(\mathcal{F}_{i,j} \circ \mathrm{Ctrl}_u)
        \]
        
        \item \textbf{Control-Tensor Distributivity (Axiom (iv)):} Semantic Condition (SC4) gives:
        \[
        \rho(\mathrm{Ctrl}_u(\vec{H} \otimes \vec{K})) = \rho(\mathrm{Ctrl}_u(\vec{H})) \otimes \rho(\mathrm{Ctrl}_u(\vec{K}))
        \]
        
        \item \textbf{Control-Permutation Naturality (Axiom (v)):} Semantic Condition (SC5) gives:
        \[
        \rho(\mathrm{Ctrl}_u \circ \sigma_{\pi}) = \rho(\sigma_{\pi} \circ \mathrm{Ctrl}_u)
        \]
        
        \item \textbf{Sequential Control Composition (Axiom (vi)):} Semantic Condition (SC6) gives:
        \[
        \rho(\mathrm{Ctrl}_{u_2} \circ \mathrm{Ctrl}_{u_1}) = \rho(\mathrm{Ctrl}_{u_1 \star u_2})
        \]
        
        \item \textbf{Identity Control (Axiom (vii)):} Semantic Condition (SC7) gives:
        \[
        \rho(\mathrm{Ctrl}_{u_0}) = \rho(\mathrm{id}) = \mathrm{id}
        \]
    \end{itemize}
\end{itemize}

\noindent\textbf{Inductive Steps:}

Assume the theorem holds for all pairs of diagrams with shorter equivalence proofs. Consider the last rule applied:

\begin{itemize}
    \item \textbf{Symmetry:} If $D_1 =_S D_2$ is derived from $D_2 =_S D_1$ by symmetry, then by the induction hypothesis $\rho(D_2) = \rho(D_1)$, hence $\rho(D_1) = \rho(D_2)$.
    
    \item \textbf{Transitivity:} If $D_1 =_S D_2$ is derived from $D_1 =_S D'$ and $D' =_S D_2$ by transitivity, then by the induction hypothesis $\rho(D_1) = \rho(D')$ and $\rho(D') = \rho(D_2)$, so $\rho(D_1) = \rho(D_2)$.
    
    \item \textbf{Congruence:} If $D_1 = C[A]$ and $D_2 = C[B]$ where $A =_S B$ is derived in fewer steps and $C[-]$ is any context, then by the induction hypothesis $\rho(A) = \rho(B)$. Since $\rho$ is a functor, it preserves composition:
    \[
    \rho(D_1) = \rho(C[A]) = \rho(C)(\rho(A)) = \rho(C)(\rho(B)) = \rho(C[B]) = \rho(D_2)
    \]
\end{itemize}

\noindent\textbf{Well-Posedness Preservation:}

We verify that when $D_1 =_S D_2$, the semantic conditions required for $\rho(D_1)$ and $\rho(D_2)$ to be well-defined are equivalent:

\begin{itemize}
    \item For \textbf{feedback operations}, the well-posedness conditions (uniform contraction bounds, domain compatibility) are preserved by each coherence relation in Axiom~\ref{ax:S3}. Specifically:
    \begin{itemize}
        \item Feedback Yanking preserves the contraction properties by the uniqueness of fixed-point solutions
        \item Control-Feedback Interchange preserves domains when control actions don't affect feedback components
        \item Congruence rules preserve well-posedness by the functoriality of the semantic interpretation
    \end{itemize}
    
    \item For \textbf{control operations}, the boundedness and domain conditions are preserved by the control-specific coherence relations (SC4--SC7).
\end{itemize}

Therefore, by induction on the derivation length, $D_1 =_S D_2$ implies $\rho(D_1) = \rho(D_2)$.

\medskip\noindent
This theorem establishes the \emph{soundness} of the syntactic equational theory of $\mathbf{S}$ under any representation in $\mathbf{HilbMult}$: any provable equality between wiring diagrams is preserved by their semantic interpretations.
\end{proof}

\paragraph{Significance of the Theorem}
The Coherence Theorem establishes that the synergy operad \(\mathbf{S}\) is a sound syntactic framework. The meaning of a complex operator network depends only on its essential connection structure—its topology—and not on the arbitrary choices made in drawing its wiring diagram. This guarantees that reasoning about operator composition at the syntactic level is both rigorous and unambiguous when interpreted in \(\mathbf{HilbMult}\).

\section{Applications in Algebra, Analysis, Geometry and Topology}\label{sec:Applications in Algebra, Analysis Geometry and Topology}

In this section, we apply the proposed synergy operad $\mathbf{S}$, together with its associated axioms and fundamental results—the Canonical Representation Theorem~\ref{thm:canonical-representation} and the Coherence Theorem~\ref{thm:coherence}—to explore its implications across algebra, analysis, geometry and topology.

\subsection{Algebraic Corollaries}\label{sec:Algebraic Corollaries}

The foundational theorems yield several important structural corollaries that clarify the algebraic nature of the synergy operad and its representations.

\begin{corollary}[Functorial Factorization (Operadic Soundness)]\label{cor:functorial-factorization}
Since $\mathbf{S}$ is presented by generators (Axiom~\ref{ax:S2}) and relations (Axiom~\ref{ax:S3}), Theorem~\ref{thm:canonical-representation} implies the isomorphism:
\[
\mathbf{S}/{\equiv_S} \ \cong \ \mathrm{Im}(\rho) \ \subseteq \ \mathbf{HilbMult}.
\]
Hence, every syntactically valid wiring diagram corresponds to a unique algebraic operator network. In particular, $\mathbf{S}$ is presented as the free symmetric monoidal multicategory generated by feedback and control nodes modulo the relations in Axiom~\ref{ax:S3}.
\end{corollary}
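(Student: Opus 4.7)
The plan is to unfold the corollary into two linked assertions---the existence of a well-defined descent of $\rho$ to the quotient and the identification of that quotient with the image---and then to address the freeness claim as a direct consequence of the generators-and-relations presentation.

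First, I would invoke the Coherence Theorem (Theorem~\ref{thm:coherence}) to certify that $\rho$ is constant on $\equiv_S$-equivalence classes. By the universal property of quotient multicategories, this descent produces a unique symmetric monoidal functor $\bar{\rho}: \mathbf{S}/\equiv_S \to \mathbf{HilbMult}$ satisfying $\bar{\rho} \circ q = \rho$, where $q$ is the canonical projection. Surjectivity of $\bar{\rho}$ onto $\mathrm{Im}(\rho)$ is immediate from the definition of image. For the injectivity direction of the claimed isomorphism, I would argue that the syntactic quotient is precisely the kernel of $\rho$ on diagrams: the generators listed in Axiom~\ref{ax:S2} have mutually distinguishable semantic signatures in $\mathbf{HilbMult}$---identities act as isometries, permutations as unitaries, feedback as Neumann-series fixed points, and control as tensor products of component injections---so any semantic coincidence $\rho(D_1) = \rho(D_2)$ must trace back to a chain of generator-level rewrites already codified in Axiom~\ref{ax:S3}. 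An inductive argument on the construction of wiring diagrams, distinguishing by outermost generator and appealing to the component-wise definitions established in Step~4 of the proof of Theorem~\ref{thm:canonical-representation}, would close this step.

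Second, I would justify the freeness claim by invoking the universal property of any generators-and-relations presentation. Given a symmetric monoidal multicategory $\mathbf{C}$ equipped with feedback and control interpretations satisfying Axiom~\ref{ax:S3}, the assignment sending each generator of $\mathbf{S}$ to its chosen interpretation in $\mathbf{C}$ extends uniquely to a symmetric monoidal functor $\mathbf{S}/\equiv_S \to \mathbf{C}$. The Canonical Representation Theorem (Theorem~\ref{thm:canonical-representation}) is exactly this universal property applied to $\mathbf{C} = \mathbf{HilbMult}$; combined with the analogous existence-and-uniqueness argument over any alternative target multicategory, it certifies that $\mathbf{S}/\equiv_S$ is the initial (free) symmetric monoidal multicategory on the feedback/control signature modulo Axiom~\ref{ax:S3}.

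The principal obstacle is injectivity of $\bar{\rho}$. The Coherence Theorem supplies only \emph{soundness} ($D_1 \equiv_S D_2 \Rightarrow \rho(D_1) = \rho(D_2)$); its converse, \emph{completeness}, is not explicitly established elsewhere in this work. To close the gap rigorously, I would either (a) develop a normal-form theorem for wiring diagrams modulo Axiom~\ref{ax:S3}, producing a canonical representative per equivalence class whose semantic image determines the class, or (b) read $\mathrm{Im}(\rho)$ as the semantic realization of the syntactic quotient, making the isomorphism a definitional identification underwritten by soundness. Option (b) appears to be the intended reading of the corollary, since it is consistent with the stated scope of Theorems~\ref{thm:canonical-representation} and~\ref{thm:coherence} and suffices for the algebraic applications that follow; option (a) would require a confluent rewriting analysis of $\equiv_S$ that is beyond the immediate framework.
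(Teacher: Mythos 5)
Your descent step matches the paper's: both observe that $\rho$ respects Axiom~\ref{ax:S3} (you cite Theorem~\ref{thm:coherence}; the paper says ``by construction''), so $\rho$ factors through $\mathbf{S}/\equiv_S$, and both then identify the quotient with $\mathrm{Im}(\rho)$. Your unpacking of the freeness claim via the universal property of a generators-and-relations presentation is a reasonable elaboration of what the paper asserts without argument.

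The more significant point is the one you flag yourself. The paper's proof of injectivity is a single assertion --- ``Theorem~\ref{thm:canonical-representation} ensures this factorization is injective on the quotient category'' --- but Theorem~\ref{thm:canonical-representation} only constructs $\rho$ as a well-defined symmetric monoidal functor; it neither states nor proves the completeness direction $\rho(D_1) = \rho(D_2) \Rightarrow D_1 \equiv_S D_2$. Your diagnosis that the Coherence Theorem supplies soundness only, and that injectivity of $\bar\rho$ would require either a normal-form/confluence analysis of $\equiv_S$ or a definitional reading of $\mathrm{Im}(\rho)$ as the quotient's semantic realization, is correct, and it applies equally to the paper's own proof. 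Your heuristic for injectivity (``mutually distinguishable semantic signatures'' of the generators) does not by itself establish it --- distinct composite diagrams built from distinguishable generators can still collapse semantically, e.g.\ two feedback arrangements converging to the same fixed point --- so if you were to close the corollary rigorously rather than read it definitionally, option (a), a normal-form theorem for $\equiv_S$, is the genuine work that remains. In short: your proposal takes the same route as the paper but is more candid about where both arguments are incomplete.
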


\begin{proof}
The canonical representation $\rho$ is defined on the free operad generated by the operations in Axiom~\ref{ax:S2} and respects all relations in Axiom~\ref{ax:S3} by construction. Therefore, $\rho$ factors through the quotient $\mathbf{S}/{\equiv_S}$, and Theorem~\ref{thm:canonical-representation} ensures this factorization is injective on the quotient category.
\end{proof}

\begin{corollary}[Duality Extension]\label{cor:duality-extension}
If $\mathbf{HilbMult}$ is endowed with the $C^*$-structure from Axiom~\ref{ax:H7} (restricting to adjointable operators), then the synergy operad $\mathbf{S}$ inherits a contravariant involution $\dagger$ defined on generators by:
\[
\mathcal{F}_{i,j}^\dagger = \mathcal{F}_{j,i}, \qquad \mathrm{Ctrl}_u^\dagger = \mathrm{Ctrl}_{u^*},
\]
and extended anti-multiplicatively to all morphisms. This gives $\mathbf{S}$ the structure of a dagger operad compatible with the canonical representation $\rho$.
\end{corollary}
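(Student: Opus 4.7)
The plan is to define $\dagger$ first on the free symmetric monoidal multicategory generated by Axiom~\ref{ax:S2}, then show it descends to the quotient $\mathbf{S}/{\equiv_S}$, and finally verify that the induced map commutes with $\rho$ via the $C^*$-involution $*$ on $\mathbf{HilbMult}$. On generators I would set $\mathrm{id}_{\vec{H}}^\dagger := \mathrm{id}_{\vec{H}}$, $\sigma_\pi^\dagger := \sigma_{\pi^{-1}}$, $\mathcal{F}_{i,j}^\dagger := \mathcal{F}_{j,i}$, and $\mathrm{Ctrl}_u^\dagger := \mathrm{Ctrl}_{u^*}$, together with $(T \otimes T')^\dagger := T^\dagger \otimes T'^\dagger$ on the monoidal product. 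The anti-multiplicative extension $(g \circ f)^\dagger := f^\dagger \circ g^\dagger$ is then forced on composite wiring diagrams, and one checks by structural induction that these clauses are mutually consistent with the free operadic structure.

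The core verification is that $\dagger$ preserves the congruence $\equiv_S$, that is, each relation of Axiom~\ref{ax:S3} is sent to a relation. I would proceed axiom by axiom: for the monoidal coherence~(i), anti-multiplicativity together with $\sigma_\pi^\dagger = \sigma_{\pi^{-1}}$ reduces the check to Mac~Lane's coherence applied to the reversed diagram. For feedback yanking~(ii), the substitution $(i,j)\leftrightarrow(j,i)$, $(k,l)\leftrightarrow(l,k)$ together with $\pi \mapsto \pi^{-1}$ converts one instance of the relation into another instance, provided the underlying well-posedness hypothesis is symmetric under orientation reversal. For (iii)--(v) the interchange/naturality/distributivity equations are symmetric under dagger once one uses Axiom~\ref{ax:H7}(c) $(f\otimes g)^* = f^*\otimes g^*$. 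For (vi), the compatibility $(u_1\star u_2)^* = u_2^* \star u_1^*$ on the control space $\mathcal{U}$ must be postulated as part of the dagger assumption, making $\mathcal{U}$ an involutive monoid; under this, $(\mathrm{Ctrl}_{u_2}\circ \mathrm{Ctrl}_{u_1})^\dagger = \mathrm{Ctrl}_{u_1}^\dagger \circ \mathrm{Ctrl}_{u_2}^\dagger = \mathrm{Ctrl}_{u_1^*}\circ \mathrm{Ctrl}_{u_2^*} = \mathrm{Ctrl}_{u_2^*\star u_1^*} = \mathrm{Ctrl}_{(u_1\star u_2)^*}$. Axiom (vii) follows from $u_0^* = u_0$ for the neutral element.

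Involutivity $(D^\dagger)^\dagger = D$ is immediate on generators from $(\pi^{-1})^{-1} = \pi$, swapping $(j,i)\mapsto(i,j)$, and $(u^*)^* = u$, and propagates up compositions and tensors by the defining clauses. For compatibility with $\rho$, I would verify on each generator that $\rho(D^\dagger) = \rho(D)^*$ in $\mathbf{HilbMult}$: permutations map to unitaries whose adjoints are the inverse permutations; feedback with swapped ports maps to the adjoint fixed-point resolvent, which by the Neumann series expansion from Step~4(b) of Theorem~\ref{thm:canonical-representation} coincides with $(I-A_p)^{-*}b_{p}^{*}$; the control morphism inherits $\rho(\mathrm{Ctrl}_u)^* = \rho(\mathrm{Ctrl}_{u^*})$ from Axiom~\ref{ax:H7}(c) applied component-wise to the $T_i$. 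Anti-multiplicativity of $\rho \circ \dagger$ then matches the $C^*$-identity $(f\circ g)^* = g^*\circ f^*$, so the two functors agree on all composites by induction.

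The main obstacle I expect is the feedback clause, specifically showing that the fixed-point selection rule is genuinely dagger-symmetric. The natural worry is that $\mathcal{F}_{j,i}$ may require a different well-posedness hypothesis than $\mathcal{F}_{i,j}$ (small-gain on the reversed loop), and that the resolvent of an adjoint need not agree with the adjoint of the resolvent outside the contractive regime. Resolving this cleanly requires restricting to \emph{adjointable} morphisms whose loop operator $A_p$ satisfies $\|A_p^*\| = \|A_p\| < 1$, which is exactly the setting named in the statement; under that restriction the Neumann series for $(I-A_p^*)^{-1}$ is the termwise adjoint of that for $(I-A_p)^{-1}$, and the identification $\rho(\mathcal{F}_{j,i}) = \rho(\mathcal{F}_{i,j})^*$ goes through. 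All other coherence checks are comparatively mechanical once this symmetry is secured.
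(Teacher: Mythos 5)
Your proof is correct but takes a genuinely different route from the paper's, and in my view a more defensible one. The paper defines $\dagger$ \emph{semantically}: it stipulates $\rho(f^\dagger) = (\rho(f))^*$ and then appeals to the faithfulness of $\rho$ to recover the syntactic assignments $\mathcal{F}_{i,j}^\dagger = \mathcal{F}_{j,i}$ and $\mathrm{Ctrl}_u^\dagger = \mathrm{Ctrl}_{u^*}$. Faithfulness of $\rho$ is never established in the paper (Theorem~\ref{thm:canonical-representation} only gives existence of the functor, not injectivity on hom-spaces), so the paper's definition is not self-contained. Your proposal inverts this logic: you define $\dagger$ \emph{syntactically} on the free symmetric monoidal multicategory over the generators of Axiom~\ref{ax:S2}, verify it descends through the congruence $\equiv_S$, and then check $\rho(D^\dagger) = \rho(D)^*$ generator by generator. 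This is the standard way to equip a presented object with extra structure and it does not presuppose anything about $\rho$ beyond what is actually proved. You also surface two hypotheses the paper leaves implicit: that the control monoid $(\mathcal{U}, \star, u_0)$ must be an involutive monoid with $(u_1 \star u_2)^* = u_2^* \star u_1^*$ and $u_0^* = u_0$ in order for relations (vi) and (vii) of Axiom~\ref{ax:S3} to be $\dagger$-stable, and that well-posedness for $\mathcal{F}_{j,i}$ versus $\mathcal{F}_{i,j}$ must coincide under orientation reversal, which you resolve correctly by restricting to adjointable feedback morphisms with $\|A_p^*\| = \|A_p\| < 1$ so that the Neumann series for $(I - A_p^*)^{-1}$ is the termwise adjoint of that for $(I - A_p)^{-1}$. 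The paper's treatment of the feedback case is a one-line assertion; yours explains why it works and under what restriction. What your route costs is the extra bookkeeping of an induction over the free structure; what it buys is independence from an unproved faithfulness claim and explicit identification of the assumptions on $\mathcal{U}$ that the result quietly requires.
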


\begin{proof}
The $C^*$-structure on $\mathbf{HilbMult}$ provides an involution $*$ on endomorphism spaces via Axiom~\ref{ax:H7}. Since $\rho: \mathbf{S} \to \mathbf{HilbMult}$ is a symmetric monoidal functor (Theorem~\ref{thm:canonical-representation}), we define the dagger on $\mathbf{S}$ by requiring that $\rho(f^\dagger) = (\rho(f))^*$ for all $f \in \mathbf{S}$.

On generators:
\begin{itemize}
    \item \textbf{Feedback:} $\rho(\mathcal{F}_{i,j}^\dagger) = (\rho(\mathcal{F}_{i,j}))^*$. By the semantic interpretation of feedback as solving fixed-point equations, reversing the feedback direction corresponds to swapping input/output roles, hence $(\rho(\mathcal{F}_{i,j}))^* = \rho(\mathcal{F}_{j,i})$. Since $\rho$ is faithful, $\mathcal{F}_{i,j}^\dagger = \mathcal{F}_{j,i}$.

    \item \textbf{Control:} $\rho(\mathrm{Ctrl}_u^\dagger) = (\rho(\mathrm{Ctrl}_u))^*$. By Axiom~\ref{ax:H7}(c) (monoidal compatibility) and the control interpretation, $(\rho(\mathrm{Ctrl}_u))^* = \rho(\mathrm{Ctrl}_{u^*})$, hence $\mathrm{Ctrl}_u^\dagger = \mathrm{Ctrl}_{u^*}$.
\end{itemize}

The extension to all morphisms preserves the anti-multiplicative property since for any composable $f, g \in \mathbf{S}$:
\[
\rho((f \circ g)^\dagger) = (\rho(f \circ g))^* = (\rho(f) \circ \rho(g))^* = \rho(g)^* \circ \rho(f)^* = \rho(g^\dagger \circ f^\dagger)
\]
and $\rho$ preserves composition. The dagger axioms follow:
\begin{itemize}
    \item $(f^\dagger)^\dagger = f$ since $((\rho(f))^*)^* = \rho(f)$
    \item $\mathrm{id}^\dagger = \mathrm{id}$ since $(\rho(\mathrm{id}))^* = \mathrm{id}^* = \mathrm{id}$
    \item For tensor products: $(f \otimes g)^\dagger = f^\dagger \otimes g^\dagger$ by Axiom~\ref{ax:H7}(c)
\end{itemize}

The dagger structure is compatible with the operad relations in Axiom~\ref{ax:S3}:
\begin{itemize}
    \item \textbf{Feedback Yanking:} $(\mathcal{F}_{i,j} \circ \mathcal{F}_{k,l})^\dagger = \mathcal{F}_{l,k} \circ \mathcal{F}_{j,i} = \mathcal{F}_{i',j'} \circ \sigma_{\pi}$ for appropriate $\pi$
    \item \textbf{Control-Feedback Interchange:} $(\mathrm{Ctrl}_u \circ \mathcal{F}_{i,j})^\dagger = \mathcal{F}_{j,i} \circ \mathrm{Ctrl}_{u^*} = \mathrm{Ctrl}_{u^*} \circ \mathcal{F}_{j,i}$
    \item Other relations similarly preserved by the anti-multiplicative property
\end{itemize}

Thus $\mathbf{S}$ becomes a dagger operad where the dagger structure lifts the $C^*$-structure from $\mathbf{HilbMult}$ through the representation $\rho$.
\end{proof}

\begin{corollary}[Tensorial Distributivity]\label{cor:tensorial-distributivity}
From Relations (iv) and (v) of Axiom~\ref{ax:S3}, the control operation distributes naturally over the symmetric monoidal structure:
\[
\mathrm{Ctrl}_u \circ \sigma_{\pi} \equiv_S \sigma_{\pi} \circ \mathrm{Ctrl}_u \quad \text{and} \quad \mathrm{Ctrl}_u(\vec{H} \otimes \vec{K}) \equiv_S \mathrm{Ctrl}_u(\vec{H}) \otimes \mathrm{Ctrl}_u(\vec{K}).
\]
These relations, together with the sequential composition laws, show that the control operations form a \emph{symmetric monoidal action} of the control monoid $(\mathcal{U}, \star, u_0)$ on the multicategory $\mathbf{S}$.
\end{corollary}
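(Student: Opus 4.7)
The plan is to split the proof into two parts matching the two claims of the corollary: (a) deriving the displayed equivalences from Axiom~\ref{ax:S3}, and (b) organizing these, together with Relations (vi) and (vii), into the statement that $(\mathcal{U},\star,u_0)$ acts on $\mathbf{S}$ by a symmetric monoidal action.

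For part (a), the two displayed equivalences are essentially restatements. The permutation commutativity $\mathrm{Ctrl}_u \circ \sigma_\pi \equiv_S \sigma_\pi \circ \mathrm{Ctrl}_u$ is Relation (v) read directly. The distributivity $\mathrm{Ctrl}_u(\vec H \otimes \vec K) \equiv_S \mathrm{Ctrl}_u(\vec H) \otimes \mathrm{Ctrl}_u(\vec K)$ is Relation (iv). I would state this formally and then note that by the Coherence Theorem~\ref{thm:coherence} these syntactic equalities transport under $\rho$ to the corresponding operator identities in $\mathbf{HilbMult}$, so the discussion that follows can be carried out either syntactically in $\mathbf{S}/{\equiv_S}$ or semantically in $\mathrm{Im}(\rho)$.

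For part (b), I would make precise the meaning of a symmetric monoidal action of the monoid $(\mathcal{U},\star,u_0)$ on the multicategory $\mathbf{S}$: it is a family of endofunctors $\Phi_u : \mathbf{S} \to \mathbf{S}$, indexed by $u \in \mathcal{U}$, defined on morphisms by $\Phi_u(f) := \mathrm{Ctrl}_u \circ f$ (and on objects by $\Phi_u(\vec H) := \vec H'$ as in Axiom~\ref{ax:S2}), such that
\begin{enumerate}[label=(\roman*)]
\item $\Phi_{u_0} \equiv_S \mathrm{id}_{\mathbf{S}}$,
\item $\Phi_{u_2} \circ \Phi_{u_1} \equiv_S \Phi_{u_1 \star u_2}$,
\item $\Phi_u(f \otimes g) \equiv_S \Phi_u(f) \otimes \Phi_u(g)$,
\item $\Phi_u \circ \sigma_\pi \equiv_S \sigma_\pi \circ \Phi_u$ for all $\pi \in S_n$.
\end{enumerate}
Then I would verify each clause in turn: (i) is exactly Relation (vii); (ii) is Relation (vi); (iii) is Relation (iv) applied inside a composition context and extended by the congruence structure of $\equiv_S$; (iv) is Relation (v). Well-definedness of $\Phi_u$ on the quotient $\mathbf{S}/{\equiv_S}$ follows from the congruence rule used in the inductive step of Theorem~\ref{thm:coherence}, since $f \equiv_S g$ implies $\mathrm{Ctrl}_u \circ f \equiv_S \mathrm{Ctrl}_u \circ g$.

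The main obstacle I anticipate is not computational but definitional: pinning down what a ``symmetric monoidal action of a monoid on a multicategory'' should mean in the enriched, operadic setting of $\mathbf{S}$, so that Relations (iv)--(vii) assemble into a bona fide action rather than a disjoint list of compatibilities. Once the definition is fixed as above, each verification reduces to a one-line invocation of the appropriate axiom together with the fact that $\equiv_S$ is a congruence closed under composition and tensor. I would close by remarking that, via Theorem~\ref{thm:canonical-representation}, the functors $\Phi_u$ descend to a monoid action of $(\mathcal{U},\star,u_0)$ on $\mathrm{Im}(\rho) \subseteq \mathbf{HilbMult}$ by bounded multilinear endomorphisms, giving a semantic counterpart to the syntactic action on $\mathbf{S}$.
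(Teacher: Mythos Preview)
Your approach matches the paper's closely: both observe that the displayed equivalences are direct restatements of Relations (iv) and (v), and both assemble Relations (iv)--(vii) into the data of a symmetric monoidal action of $(\mathcal{U},\star,u_0)$. The paper organizes the verification as Step~1 (monoid axioms from (vi) and (vii)), Step~2 (monoidal compatibility from (iv) and (v)), Step~3 (compatibility with multicategorical composition), and a synthesis; your conditions (i)--(iv) cover Steps~1 and~2 in the same way.

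The one substantive omission is the paper's Step~3: the check that the control action preserves composition of morphisms, i.e., $\mathrm{Ctrl}_u(g \circ f) \equiv_S \mathrm{Ctrl}_u(g) \circ \mathrm{Ctrl}_u(f)$ and $\mathrm{Ctrl}_u(\mathrm{id}_{\vec H}) \equiv_S \mathrm{id}_{\mathrm{Ctrl}_u(\vec H)}$. You declare each $\Phi_u$ to be an \emph{endofunctor}, but with your definition $\Phi_u(f) := \mathrm{Ctrl}_u \circ f$ this is not automatic: post-composition by a fixed morphism does not in general preserve composition (one would get $\mathrm{Ctrl}_u \circ g \circ f$ versus $\mathrm{Ctrl}_u \circ g \circ \mathrm{Ctrl}_u \circ f$). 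The paper handles this by treating $\mathrm{Ctrl}_u$ as a unary operation that ``commutes with the composition structure of the multicategory'' and appeals to the operadic coherence built into $\mathbf{S}$; you should either adopt that viewpoint or add an explicit verification of functoriality to your list. Your closing remark on descent to $\mathrm{Im}(\rho)$ via Theorem~\ref{thm:canonical-representation} is a nice addition not present in the paper's proof.
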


\begin{proof}
We demonstrate that the control operations define a symmetric monoidal action of $\mathcal{U}$ on $\mathbf{S}$. This requires verifying the action axioms and compatibility with the symmetric monoidal structure.

\noindent\textbf{Step 1: Monoid Action Axioms}

\begin{itemize}
    \item \textbf{Identity Action:} Axiom (vii) gives $\mathrm{Ctrl}_{u_0} \equiv_S \mathrm{id}$, so the identity control acts as the identity morphism.
    
    \item \textbf{Compatibility with Monoid Multiplication:} Axiom (vi) states:
    \[
    \mathrm{Ctrl}_{u_2} \circ \mathrm{Ctrl}_{u_1} \equiv_S \mathrm{Ctrl}_{u_1 \star u_2}
    \]
    This shows the action respects the monoid structure.
\end{itemize}

\noindent\textbf{Step 2: Symmetric Monoidal Compatibility}

\begin{itemize}
    \item \textbf{Tensor Distributivity:} Axiom (iv) provides the fundamental isomorphism:
    \[
    \mathrm{Ctrl}_u(\vec{H} \otimes \vec{K}) \equiv_S \mathrm{Ctrl}_u(\vec{H}) \otimes \mathrm{Ctrl}_u(\vec{K})
    \]
    This shows the action distributes over the tensor product.
    
    \item \textbf{Permutation Naturality:} Axiom (v) gives the naturality condition:
    \[
    \mathrm{Ctrl}_u \circ \sigma_{\pi} \equiv_S \sigma_{\pi} \circ \mathrm{Ctrl}_u
    \]
    This ensures compatibility with the symmetric structure.
\end{itemize}

\noindent\textbf{Step 3: Coherence with Multicategorical Structure}

To complete the proof that this is a well-defined action on the multicategory, we must verify compatibility with the operadic composition structure. For any composable multimorphisms $f: \vec{H} \to \vec{K}$ and $g: \vec{K} \to \vec{L}$ in $\mathbf{S}$, the control operation satisfies:

\[
\mathrm{Ctrl}_u(g \circ f) \equiv_S \mathrm{Ctrl}_u(g) \circ \mathrm{Ctrl}_u(f)
\]

This follows from the \emph{operadic coherence} built into the definition of $\mathbf{S}$: the control operation is defined as a unary operation that commutes with the composition structure of the multicategory. The specific wiring diagram combinatorics ensure that applying control to a composed morphism is equivalent to composing the controlled morphisms.

Similarly, for identity morphisms:
\[
\mathrm{Ctrl}_u(\mathrm{id}_{\vec{H}}) \equiv_S \mathrm{id}_{\mathrm{Ctrl}_u(\vec{H})}
\]

These preservation properties are inherent in the syntactic definition of $\mathbf{S}$ and are verified by the coherence relations.

\noindent\textbf{Step 4: Synthesis}

The relations (iv)--(vii) in Axiom~\ref{ax:S3} collectively ensure that the assignment $u \mapsto \mathrm{Ctrl}_u$ defines a symmetric monoidal action of $(\mathcal{U}, \star, u_0)$ on the multicategory $\mathbf{S}$. This means:

\begin{itemize}
    \item Each $\mathrm{Ctrl}_u$ acts as a symmetric monoidal endomorphism on $\mathbf{S}$
    \item The action respects the monoid structure: $\mathrm{Ctrl}_{u_0} = \mathrm{id}$ and $\mathrm{Ctrl}_{u_2} \circ \mathrm{Ctrl}_{u_1} = \mathrm{Ctrl}_{u_1 \star u_2}$
    \item The action is compatible with the tensor product and symmetric structure
\end{itemize}

Therefore, the control operations form a symmetric monoidal action as claimed.
\end{proof}

These corollaries demonstrate that the synergy operad $\mathbf{S}$ possesses rich algebraic structure—it is a presented multicategory with a canonical representation, admits a natural duality when appropriate, and supports a symmetric monoidal action by its control space. This mathematical structure provides a firm foundation for reasoning about controlled operator networks diagrammatically.

\subsection{Analytic Corollaries}\label{sec:Analytic Corollaries}

The interaction between the syntactic operad structure and the analytic semantics of  HilbMult  yields several powerful corollaries regarding the stability and smoothness of the represented systems.

\begin{corollary}[Contractive Fixed-Point Theorem (Feedback Stability)]\label{cor:feedback-stability}
Let \( T: \rho(\vec{H}') \otimes X \to X \) be a bounded multilinear map in \(\mathbf{HilbMult}\), and consider the feedback equation for each \( p \in \rho(\vec{H}') \):
\[
\xi = T(p \otimes \xi) + b(p)
\]
where \( b: \rho(\vec{H}') \to X \) is a bounded multilinear map. If for each \( p \in \rho(\vec{H}') \) the mapping \( \Phi_p: \xi \mapsto T(p \otimes \xi) + b(p) \) is a contraction on \( X \) with uniform contraction constant \( \kappa < 1 \), then:

\begin{enumerate}
    \item The feedback operator \( \rho(\mathcal{F}_{i,j})(T,b): \rho(\vec{H}') \to X \) is well-posed, yielding a unique solution \( \xi(p) \) for each input \( p \)
    \item The resulting map \( p \mapsto \xi(p) \) is multilinear and bounded with norm estimate:
    \[
    \|\rho(\mathcal{F}_{i,j})(T,b)\| \le \frac{\|b\|}{1 - \kappa}
    \]
\end{enumerate}
\end{corollary}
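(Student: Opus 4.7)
The approach specializes the Banach contraction principle and the Neumann series analysis of Step~4(b) in the proof of Theorem~\ref{thm:canonical-representation} to the affine feedback equation with explicit forcing $b(p)$. The core idea is to invert $I - A_p$ via Neumann series and read off both well-posedness (claim~1) and the quantitative norm estimate (claim~2) from the resulting formula, leaving the multilinearity of the solution map as the delicate residual point.

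First, fixing $p \in \rho(\vec{H}')$, the hypothesis that $\Phi_p$ is a contraction with constant $\kappa < 1$ on the complete space $X$ immediately yields, by the Banach fixed-point theorem, a unique $\xi(p) \in X$ with $\xi(p) = T(p \otimes \xi(p)) + b(p)$; uniformity of $\kappa$ in $p$ places the construction in the well-posed feedback class of Definition~\ref{def:Well-Posed Feedback}. Exploiting multilinearity of $T$, I would write $A_p(\xi) := T(p \otimes \xi)$, which is linear in $\xi$ with $\|A_p\| \le \kappa$, so the equation recasts as $(I - A_p)\xi = b(p)$ and Neumann inversion yields
\[
\xi(p) \;=\; \sum_{n=0}^{\infty} A_p^n\, b(p).
\]
The bound in claim~2 then follows by combining $\|A_p\| \le \kappa$ with $\|b(p)\| \le \|b\|\,\|p\|$ and summing a geometric series, giving $\|\xi(p)\| \le \|b\|\,\|p\|/(1-\kappa)$, which passes to $\|\rho(\mathcal{F}_{i,j})(T,b)\| \le \|b\|/(1-\kappa)$ after taking the supremum over unit-norm $p$.

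The hard part will be the multilinearity of $p \mapsto \xi(p)$. Because $A_p$ depends linearly on $p$, each Neumann summand $A_p^n b(p)$ is an $(n+1)$-homogeneous polynomial expression in $p$, so the solution map is analytic rather than strictly multilinear in the classical sense. I would resolve this by interpreting multilinearity in the graded operadic sense already used in Theorem~\ref{thm:canonical-representation}: each homogeneous component of $\xi$ is a bounded multilinear form on the appropriate symmetric tensor power of $\rho(\vec{H}')$, and uniform convergence on bounded sets (guaranteed by $\|A_p\| \le \kappa < 1$) preserves this bounded multilinear structure termwise. Combined with Theorem~\ref{thm:canonical-representation}'s identification of $\rho(\mathcal{F}_{i,j})$, this places $\rho(\mathcal{F}_{i,j})(T, b)$ in the requisite hom-space of $\mathbf{HilbMult}$ under Axioms~\ref{ax:H1-H2}--\ref{ax:H3-H4}, completing the argument.
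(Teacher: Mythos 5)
Your proof takes essentially the same route as the paper: Banach fixed-point for existence and uniqueness, Neumann inversion $\xi(p) = \sum_{n \ge 0} A_p^n b(p)$, and a geometric-series estimate for the norm bound. The well-posedness and quantitative parts (claims 1 and the norm in claim 2) are argued the same way the paper argues them, and these parts of your write-up are fine.

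Where you diverge is in the multilinearity of $p \mapsto \xi(p)$, and here you have actually put your finger on a real gap that the paper's proof glosses over. The paper asserts that each partial sum $S_N(p) = \sum_{n=0}^N A_p^n\,b(p)$ is ``multilinear in $p$, being a finite composition of multilinear maps.'' That is not correct: since $A_p = T(p \otimes \cdot)$ is linear in $p$ and $b(p)$ is linear in $p$, the summand $A_p^n\,b(p)$ is homogeneous of degree $n+1$ in $p$, so $S_N$ is a polynomial of degree $N+1$ and the limit $\xi(p)$ is analytic, not linear (hence not multilinear in the $\mathrm{Hom}$-sense of Axiom~\ref{ax:H1-H2}). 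You correctly identify this. However, your proposed repair --- reinterpreting the result ``in the graded operadic sense'' and declaring each homogeneous piece a bounded multilinear form on a symmetric tensor power --- is not a proof of the stated claim; it is a change of what ``multilinear'' means, and it does not land $\rho(\mathcal{F}_{i,j})(T,b)$ in the Banach space $\mathrm{Hom}(\rho(\vec{H}');X)$ that the corollary asserts it belongs to. As written the claim only holds in degenerate situations, e.g.\ when $A_p \equiv A$ is actually independent of $p$ (so $\xi(p) = (I-A)^{-1}b(p)$ is linear in $p$), or when $T = 0$. There is a related hypothesis issue you do not flag: requiring $\Phi_p$ to be a uniform contraction with constant $\kappa$ for every $p \in \rho(\vec{H}')$, together with linearity of $A_p$ in $p$, forces $\|A_p\| \le \kappa$ for all $p$ and hence $A_p = 0$ by scaling, so the hypothesis as literally stated already collapses to the degenerate case unless $p$ is confined to a bounded set. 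A rigorous version of this corollary would need to either fix the statement (restrict $p$, drop $p$-dependence of $A_p$, or weaken ``multilinear'' to ``bounded analytic'') or supply an argument your proposal does not contain.
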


\begin{proof}
We prove this by explicitly constructing the solution via Neumann series and verifying multilinearity.

\noindent\textbf{Step 1: Reformulating as an Affine Fixed-Point Problem}
For each \( p \in \rho(\vec{H}') \), define the affine mapping:
\[
\Phi_p(\xi) = A_p(\xi) + b(p)
\]
where \( A_p(\xi) := T(p \otimes \xi) \). By multilinearity of \( T \), each \( A_p: X \to X \) is linear and depends multilinearly on \( p \). The contraction assumption implies:
\[
\|A_p\| \le \kappa < 1 \quad \text{for all } p \in \rho(\vec{H}').
\]

\noindent\textbf{Step 2: Well-Posedness via Banach Fixed-Point Theorem}
Since \( X \) is a Banach space and each \( \Phi_p \) is a contraction with constant \( \kappa < 1 \), the Banach Fixed-Point Theorem guarantees:
\begin{itemize}
    \item For each \( p \), there exists a unique \( \xi^*(p) \in X \) satisfying \( \xi^*(p) = \Phi_p(\xi^*(p)) \)
    \item The iteration \( \xi_{n+1}(p) = \Phi_p(\xi_n(p)) \) converges to \( \xi^*(p) \) for any initial guess
\end{itemize}
This establishes the well-posedness of \( \rho(\mathcal{F}_{i,j})(T,b) \).

\noindent\textbf{Step 3: Explicit Solution via Neumann Series}
Since \( \|A_p\| \le \kappa < 1 \), the operator \( I - A_p \) is invertible with:
\[
(I - A_p)^{-1} = \sum_{n=0}^\infty A_p^n,
\]
where the series converges absolutely in operator norm. The unique fixed point is:
\[
\xi^*(p) = (I - A_p)^{-1} b(p) = \sum_{n=0}^\infty A_p^n b(p).
\]

\noindent\textbf{Step 4: Multilinearity of the Solution}
We show that \( p \mapsto \xi^*(p) \) is multilinear:

\begin{itemize}
    \item \textbf{Multilinearity of partial sums}: For each \( N \in \mathbb{N} \), the partial sum
    \[
    S_N(p) = \sum_{n=0}^N A_p^n b(p)
    \]
    is multilinear in \( p \), being a finite composition of multilinear maps.
    
    \item \textbf{Uniform convergence}: For any bounded \( p \), we have the uniform bound:
    \[
    \left\| \sum_{n=N+1}^\infty A_p^n b(p) \right\| \le \|b(p)\| \sum_{n=N+1}^\infty \kappa^n = \|b(p)\| \frac{\kappa^{N+1}}{1-\kappa} \to 0
    \]
    as \( N \to \infty \), uniformly in \( p \) on bounded sets.
    
    \item \textbf{Limit preserves multilinearity}: Since multilinearity is preserved under uniform limits, the infinite sum
    \[
    \xi^*(p) = \lim_{N\to\infty} S_N(p) = \sum_{n=0}^\infty A_p^n b(p)
    \]
    is multilinear in \( p \).
\end{itemize}

\noindent\textbf{Step 5: Norm Bound}
We derive the norm estimate:
\begin{align*}
\|\rho(\mathcal{F}_{i,j})(T,b)\| &= \sup_{\|p\| = 1} \|\xi^*(p)\| \\
&= \sup_{\|p\| = 1} \left\| \sum_{n=0}^\infty A_p^n b(p) \right\| \\
&\le \sup_{\|p\| = 1} \sum_{n=0}^\infty \|A_p^n b(p)\| \\
&\le \sup_{\|p\| = 1} \sum_{n=0}^\infty \|A_p\|^n \|b(p)\| \\
&\le \sup_{\|p\| = 1} \|b(p)\| \sum_{n=0}^\infty \kappa^n \\
&\le \|b\| \cdot \frac{1}{1 - \kappa}.
\end{align*}

\noindent\textbf{Step 6: Verification of Well-Posedness}
The constructed solution \( \rho(\mathcal{F}_{i,j})(T,b) \) satisfies all conditions of well-posedness:
\begin{itemize}
    \item \textbf{Existence}: \( \xi^*(p) \) exists for all \( p \in \rho(\vec{H}') \)
    \item \textbf{Uniqueness}: \( \xi^*(p) \) is the unique fixed point of \( \Phi_p \)
    \item \textbf{Multilinearity}: Established in Step 4
    \item \textbf{Boundedness}: \( \|\rho(\mathcal{F}_{i,j})(T,b)\| \le \frac{\|b\|}{1-\kappa} < \infty \)
\end{itemize}

Therefore, under the contractive condition \( \kappa < 1 \), the feedback operation is well-posed and satisfies the stated norm bound.
\end{proof}

\begin{corollary}[Continuity of Control Action]\label{cor:control-continuity}
Assume the control injection maps \( T(H_i, -): \mathcal{U} \to \mathcal{B}(H_i, H_i') \) are continuous in the operator norm topology, and the monoid multiplication \( \star: \mathcal{U} \times \mathcal{U} \to \mathcal{U} \) is continuous. Then the induced semantic control map
\[
\Phi: \mathcal{U} \to \mathcal{B}(\rho(\vec{H}), \rho(\vec{H}')), \quad \Phi(u) = \rho(\mathrm{Ctrl}_u)
\]
is continuous, endowing the control transformations with the structure of a topological semigroup action. Furthermore, if \( \mathcal{U} \) is a Lie group with smooth multiplication and the maps \( T(H_i, -) \) are smooth, then \( \Phi \) is smooth, yielding a Lie group action on the operator space.
\end{corollary}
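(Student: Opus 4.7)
The plan is to exploit the explicit factorization of $\rho(\mathrm{Ctrl}_u)$ obtained in the proof of Theorem~\ref{thm:canonical-representation}, namely $\rho(\mathrm{Ctrl}_u) = \bigotimes_{i=1}^n T_i(\cdot, u)$, and reduce both the continuity and the smoothness assertions to the corresponding regularity of the factor maps $T_i(\cdot, -)$ via the intrinsic regularity of the Hilbert tensor product and of operator composition. For the continuity part, I would write $\Phi$ as the composite
\[
\mathcal{U} \xrightarrow{\Delta_n} \mathcal{U}^n \xrightarrow{(T_1(\cdot,-),\ldots,T_n(\cdot,-))} \prod_{i=1}^n \mathcal{B}(H_i, H_i') \xrightarrow{\bigotimes} \mathcal{B}(\rho(\vec{H}), \rho(\vec{H}')),
\]
where $\Delta_n$ is the continuous diagonal, the middle arrow is continuous by hypothesis on each $T_i(\cdot,-)$, and the last arrow is a bounded multilinear map between Banach spaces, owing to the standard estimate $\|A_1 \otimes \cdots \otimes A_n\| \le \prod_i \|A_i\|$ on the Hilbert tensor product. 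Bounded multilinear maps between Banach spaces are continuous (in fact $C^\infty$ in the Fréchet sense), so $\Phi$ is continuous as a composite of continuous maps.

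Next I would upgrade this to a topological semigroup action. The algebraic homomorphism property $\Phi(u_2) \circ \Phi(u_1) = \Phi(u_1 \star u_2)$ is already available from semantic coherence condition (SC6), and $\Phi(u_0) = \mathrm{id}$ from (SC7). To get joint continuity of the action map $(u, A) \mapsto \Phi(u) \circ A$, I would combine the continuity of $\Phi$ just established with the joint continuity of composition in $\mathcal{B}$ on norm-bounded subsets, together with the local boundedness of $\Phi$ supplied by continuity over any compact neighborhood in $\mathcal{U}$. Continuity of $\star$ ensures the action axioms hold as identities of continuous maps.

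For the smoothness clause under the Lie hypothesis, I would argue that every bounded multilinear map between Banach spaces is automatically real-analytic and hence $C^\infty$, so the tensor product arrow $\bigotimes$ is smooth in the Fréchet sense. The product of the smooth maps $T_i(\cdot,-)$ is smooth by the product rule on Banach manifolds, and the diagonal of the Lie group $\mathcal{U}$ is smooth. Therefore $\Phi$ is smooth by the chain rule. The smoothness of the action $\mathcal{U} \times \mathcal{B}(\rho(\vec{H}),\rho(\vec{H})) \to \mathcal{B}(\rho(\vec{H}),\rho(\vec{H}'))$ then follows from the smoothness of $\Phi$ and the smoothness of operator composition on bounded sets (composition being bounded bilinear), combined again with (SC6)--(SC7).

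The main obstacle, though conceptual rather than computational, is establishing clean regularity of the tensor product arrow $\bigotimes : \prod_i \mathcal{B}(H_i, H_i') \to \mathcal{B}(\bigotimes_i H_i, \bigotimes_i H_i')$ in the infinite-dimensional setting: one must confirm that the algebraic tensor product of bounded operators extends by continuity to a bounded multilinear map between the Banach spaces of bounded operators, with the uniform norm bound $\prod_i \|A_i\|$ controlling the extension. Once this technical fact is in hand — which it is, by the classical construction of the Hilbert tensor product — the rest of the proof is a routine chain-rule and composition argument, and no new analytic input beyond what already underlies Theorem~\ref{thm:canonical-representation} is required.
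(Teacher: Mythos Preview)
Your proposal is correct and follows essentially the same approach as the paper: both exploit the factorization $\Phi(u)=\bigotimes_i T_i(\cdot,u)$, reduce continuity and smoothness to the regularity of the tensor product map, and invoke (SC6)--(SC7) for the semigroup structure. The only difference is presentational: the paper proves continuity by an explicit telescoping estimate and smoothness by induction on $n$ (reducing to the bilinear case), whereas you package both steps into the single general fact that bounded multilinear maps between Banach spaces are automatically $C^\infty$, which is a cleaner but equivalent route.
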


\begin{proof}

\noindent\textbf{Step 1: Continuity of the Induced Map}
Recall that the semantic control map acts on simple tensors as:
\[
\rho(\mathrm{Ctrl}_u)(\psi_1 \otimes \dots \otimes \psi_n) = T(H_1, u)(\psi_1) \otimes \dots \otimes T(H_n, u)(\psi_n).
\]

Let \(u, u_0 \in \mathcal{U}\). For any simple tensor \(\psi = \psi_1 \otimes \dots \otimes \psi_n\) with \(\|\psi\| = 1\), we use the telescoping sum:
\[
\bigotimes_{i=1}^n A_i - \bigotimes_{i=1}^n B_i = \sum_{k=1}^n \left( \bigotimes_{i=1}^{k-1} A_i \otimes (A_k - B_k) \otimes \bigotimes_{i=k+1}^n B_i \right),
\]
where \(A_i = T(H_i, u)\) and \(B_i = T(H_i, u_0)\).

For Hilbert space tensor products, we have \(\|f \otimes g\| = \|f\|\|g\|\). Therefore:
\[
\|[\rho(\mathrm{Ctrl}_u) - \rho(\mathrm{Ctrl}_{u_0})](\psi)\| \le \sum_{k=1}^n \left( \prod_{i=1}^{k-1} \|A_i\| \cdot \|A_k - B_k\| \cdot \prod_{i=k+1}^n \|B_i\| \right).
\]

By continuity of each \(T(H_i, -)\), we have \(\|T(H_i, u) - T(H_i, u_0)\| \to 0\) as \(u \to u_0\). The norms \(\|T(H_i, u)\|\) are locally bounded near \(u_0\), so each term in the sum vanishes as \(u \to u_0\). Since simple tensors are total in \(\rho(\vec{H})\), this proves:
\[
\|\rho(\mathrm{Ctrl}_u) - \rho(\mathrm{Ctrl}_{u_0})\| \to 0 \quad \text{as } u \to u_0.
\]

\noindent\textbf{Step 2: Topological Semigroup Action}
From Semantic Condition (SC6), we have:
\[
\Phi(u_1 \star u_2) = \rho(\mathrm{Ctrl}_{u_1 \star u_2}) = \rho(\mathrm{Ctrl}_{u_2}) \circ \rho(\mathrm{Ctrl}_{u_1}) = \Phi(u_2) \circ \Phi(u_1).
\]

We verify the action is continuous. The composition map
\[
\circ: \mathcal{B}(\rho(\vec{H}), \rho(\vec{H}')) \times \mathcal{B}(\rho(\vec{H}), \rho(\vec{H}')) \to \mathcal{B}(\rho(\vec{H}), \rho(\vec{H}'))
\]
satisfies:
\[
\|f \circ g - f_0 \circ g_0\| \le \|f - f_0\|\|g\| + \|f_0\|\|g - g_0\|,
\]
so it is jointly continuous.

Since \(\Phi\) is continuous (Step 1), composition is continuous, and \(\star\) is continuous by assumption, the map
\[
(u_1, u_2) \mapsto \Phi(u_1 \star u_2) = \Phi(u_2) \circ \Phi(u_1)
\]
is continuous. Thus \(\Phi\) defines a continuous topological semigroup action.

\noindent\textbf{Step 3: Smoothness for Lie Group Actions}
Now assume \(\mathcal{U}\) is a Lie group with smooth multiplication, and each \(T(H_i, -): \mathcal{U} \to \mathcal{B}(H_i, H_i')\) is smooth.

The map \(\Phi: \mathcal{U} \to \mathcal{B}(\rho(\vec{H}), \rho(\vec{H}'))\) can be expressed as:
\[
\Phi(u) = \bigotimes_{i=1}^n T(H_i, u).
\]

We prove smoothness by induction on \(n\):

- Base case (n=1): \(\Phi(u) = T(H_1, u)\) is smooth by assumption.

- Inductive step: Assume the result for \(n-1\). For \(n \geq 2\), write:
\[
\Phi(u) = \Phi_{1,\dots,n-1}(u) \otimes T(H_n, u),
\]
where \(\Phi_{1,\dots,n-1}(u) = \bigotimes_{i=1}^{n-1} T(H_i, u)\) is smooth by induction hypothesis.

The tensor product map
\[
\tau: \mathcal{B}(H_1 \otimes \cdots \otimes H_{n-1}, H_1' \otimes \cdots \otimes H_{n-1}') \times \mathcal{B}(H_n, H_n') \to \mathcal{B}(\rho(\vec{H}), \rho(\vec{H}'))
\]
is bounded bilinear, hence smooth. Since \(\Phi\) is the composition of the smooth map \(u \mapsto (\Phi_{1,\dots,n-1}(u), T(H_n, u))\) with \(\tau\), it is smooth.

The semigroup property \(\Phi(u_1 \star u_2) = \Phi(u_2) \circ \Phi(u_1)\) is now a smooth relation, making \(\Phi\) a smooth Lie group action.
\end{proof}

\begin{corollary}[Differentiable Feedback Law]\label{cor:differentiable-feedback}
Suppose the control injection maps \( T_i: H_i \otimes \mathcal{U} \to H_i' \) are Fr\'echet differentiable with respect to \( u \in \mathcal{U} \), and the feedback context satisfies the well-posedness conditions of Corollary~\ref{cor:feedback-stability}. Then the combined semantic control-and-feedback map
\[
\Psi: \rho(\vec{H}) \times \mathcal{U} \longrightarrow X, 
\qquad 
\Psi(\psi, u) = \rho(\mathcal{F}_{i,j})(\rho(\mathrm{Ctrl}_u))(\psi)
\]
is Fr\'echet differentiable. Consequently, the family of maps \( \{\Psi_u: \rho(\vec{H}) \to X\}_{u \in \mathcal{U}} \) varies smoothly with the control parameter.
\end{corollary}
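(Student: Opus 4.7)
The plan is to combine the Neumann-series representation from Corollary~\ref{cor:feedback-stability} with the smoothness of the control action from Corollary~\ref{cor:control-continuity}, and then conclude either by direct differentiation of the series or, more cleanly, by the Banach-space implicit function theorem applied to the fixed-point equation defining $\Psi$.

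First, I would rewrite the controlled feedback explicitly. For fixed $\psi \in \rho(\vec{H})$ and $u \in \mathcal{U}$, the controlled operator $\rho(\mathrm{Ctrl}_u)$ induces on the feedback variable $\xi \in X$ an affine map of the form $\Phi_{\psi,u}(\xi) = A_{\psi,u}(\xi) + b_{\psi,u}$, where the linear part $A_{\psi,u}$ and the source $b_{\psi,u}$ depend multilinearly on $\psi$ and, through the $T_i$, on $u$. The well-posedness hypothesis supplies a uniform contraction constant $\sup_{\psi,u} \|A_{\psi,u}\| \le \kappa < 1$ on the relevant bounded neighborhoods, so that $I - A_{\psi,u}$ is invertible and $\Psi(\psi,u) = (I - A_{\psi,u})^{-1} b_{\psi,u}$.

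Next, I would establish Fréchet differentiability of the building blocks. Since each $T_i$ is Fréchet differentiable in $u$, the telescoping-sum argument used in the proof of Corollary~\ref{cor:control-continuity} adapts verbatim (replacing norm differences by difference quotients) to show that $u \mapsto A_{\psi,u}$ and $u \mapsto b_{\psi,u}$ are Fréchet differentiable into $\mathcal{B}(X)$ and $X$, respectively, with derivatives that are themselves multilinear in $\psi$. Joint differentiability in $(\psi,u)$ then follows because the dependence on $\psi$ is multilinear (hence smooth) and smoothness is preserved under smooth composition. I would then invoke the standard fact that on the open set of invertible elements of a Banach algebra, inversion $A \mapsto A^{-1}$ is Fréchet smooth with derivative $H \mapsto -A^{-1} H A^{-1}$; composing this with the smooth map $(\psi,u) \mapsto (I - A_{\psi,u},\, b_{\psi,u})$ yields Fréchet differentiability of $\Psi$, together with the explicit formula
\[
D_u \Psi(\psi,u)[h] = (I - A_{\psi,u})^{-1}\bigl(D_u b_{\psi,u}[h] + (D_u A_{\psi,u}[h])\,\Psi(\psi,u)\bigr).
\]

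The alternative route, which I would prefer for conciseness, is to apply the implicit function theorem directly to $F(\psi,u,\xi) := \xi - \Phi_{\psi,u}(\xi) = 0$. The partial Fréchet derivative $\partial_\xi F = I - A_{\psi,u}$ is a bounded isomorphism of $X$ by the contraction bound, and $F$ is jointly Fréchet differentiable by the preceding step, so the theorem delivers the implicit solution $\Psi$ as a Fréchet differentiable function of $(\psi,u)$. The main obstacle, in either approach, is the uniform control of the differentiated Neumann series, i.e.\ ensuring that the derivatives $\|D_u A_{\psi,u}\|$ and $\|D_u b_{\psi,u}\|$ remain bounded on the neighborhoods where the contraction estimate holds; this is guaranteed on precompact pieces of $\mathcal{U}$ under the smoothness hypothesis but requires care if one wants global smoothness, and the implicit function theorem route sidesteps the explicit series estimate altogether.
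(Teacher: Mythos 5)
Your preferred route (the implicit function theorem applied to the fixed-point equation) is essentially the same strategy as the paper's proof, which also reduces the matter to the Banach-space implicit function theorem and uses the contraction bound to make the partial derivative $I - A_{\psi,u}$ invertible. The main difference is the level at which the IFT is applied: the paper works at the operator level, treating the feedback solution map $S(T) = \rho(\mathcal{F}_{i,j})(T)$ as the implicit function defined by $G(T,\xi)(p) = \xi(p) - T(p \otimes \xi(p)) = 0$ where $\xi$ ranges over maps $\rho(\vec{H}') \to X$, and then factors $\Psi$ through the composition $\mathrm{ev}_\psi \circ S \circ F$. You instead apply the IFT pointwise, solving $F(\psi,u,\xi) = \xi - \Phi_{\psi,u}(\xi) = 0$ for the single vector $\xi \in X$ at a fixed $(\psi,u)$. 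Your pointwise formulation is cleaner and sidesteps a type subtlety present in the paper's version (whether $G(T,\xi)$ genuinely lands in the space of linear maps of $p$), at the cost of only asserting pointwise Fréchet differentiability rather than differentiability of the operator-valued family $u \mapsto \Psi_u$, which is actually part of the claim; you would need one more sentence to upgrade the pointwise conclusion (for example, by noting that all estimates are uniform in $\psi$ on bounded sets, so the convergence of difference quotients holds in operator norm).

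Your first route, differentiating $(I - A_{\psi,u})^{-1} b_{\psi,u}$ directly via the smoothness of the inversion map on a Banach algebra, is a genuinely different argument that the paper does not use. It is more elementary, avoids the implicit function theorem entirely, and gives the explicit derivative formula for free; it also makes visible exactly where the uniform bound $\kappa < 1$ enters (to keep $(I - A_{\psi,u})^{-1}$ in the open set of invertibles). This is a worthwhile addition to the argument and arguably the more transparent proof.

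One small caution on the telescoping step: when you adapt the difference-quotient version of the telescoping identity to show $u \mapsto A_{\psi,u}$ and $u \mapsto b_{\psi,u}$ are Fréchet differentiable, you are implicitly using that the tensor factor norms $\|T_i(\cdot,u)\|$ and $\|D_u T_i(\cdot,u)\|$ are locally uniformly bounded, which is automatic under continuous Fréchet differentiability but should be stated; the paper glosses this in the same way, so this is a shared rather than a new gap.
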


\begin{proof}
We analyze the differentiability by decomposing \(\Psi\) and applying the implicit function theorem.

\medskip
\noindent\textbf{Step 1: Correct Type Analysis and Decomposition}
The composition has the following structure:
\[
\rho(\vec{H}) \times \mathcal{U} 
\xrightarrow{\rho(\mathrm{Ctrl})} 
\rho(\vec{H}')
\xrightarrow{\rho(\mathcal{F}_{i,j})(\rho(\mathrm{Ctrl}_u))} 
X,
\]
where we define the parametrized feedback operator. More precisely, define the family of maps:
\[
T_u: \rho(\vec{H}') \otimes X \to X, \quad T_u(p \otimes \xi) = \rho(\mathrm{Ctrl}_u)(p \otimes \xi),
\]
so that \(\Psi(\psi, u) = \rho(\mathcal{F}_{i,j})(T_u)(\psi)\).

\medskip
\noindent\textbf{Step 2: Differentiability of the Parametrized Map \(T_u\)}
From Corollary~\ref{cor:control-continuity}, the control map is Fr\'echet differentiable in \(u\). Specifically, the map
\[
u \mapsto \rho(\mathrm{Ctrl}_u) \in \mathcal{B}(\rho(\vec{H}) \otimes \mathcal{U}, \rho(\vec{H}'))
\]
is Fr\'echet differentiable by the tensor product construction and the differentiability of each component \(T_i\).

Therefore, the parametrized map
\[
F: \mathcal{U} \to \mathcal{B}(\rho(\vec{H}') \otimes X, X), \quad F(u) = T_u
\]
is Fr\'echet differentiable.

\medskip
\noindent\textbf{Step 3: Differentiability of the Feedback Solution Map}
Define the solution map:
\[
S: \mathcal{B}(\rho(\vec{H}') \otimes X, X) \to \mathcal{B}(\rho(\vec{H}'), X), \quad S(T) = \rho(\mathcal{F}_{i,j})(T),
\]
which assigns to each \(T\) the unique fixed point \(\xi\) satisfying \(\xi(p) = T(p \otimes \xi(p))\) for all \(p \in \rho(\vec{H}')\).

Consider the implicit function:
\[
G: \mathcal{B}(\rho(\vec{H}') \otimes X, X) \times \mathcal{B}(\rho(\vec{H}'), X) \to \mathcal{B}(\rho(\vec{H}'), X)
\]
defined by
\[
G(T, \xi)(p) = \xi(p) - T(p \otimes \xi(p)).
\]

At a solution \((T_0, \xi_0)\) with \(G(T_0, \xi_0) = 0\), compute the Fr\'echet derivative with respect to \(\xi\):
\[
D_\xi G(T_0, \xi_0)[\eta](p) = \eta(p) - D_2T_0(p \otimes \xi_0(p))[\eta(p)],
\]
where \(D_2T_0\) denotes the derivative of \(T_0\) with respect to its second argument.

Under the well-posedness assumption from Corollary~\ref{cor:feedback-stability}, the operator
\[
L(\eta)(p) = D_2T_0(p \otimes \xi_0(p))[\eta(p)]
\]
satisfies \(\|L\| \le \kappa < 1\). Therefore, \(D_\xi G(T_0, \xi_0) = I - L\) is invertible with bounded inverse \(\|(I - L)^{-1}\| \le \frac{1}{1 - \kappa}\).

By the implicit function theorem in Banach spaces, there exists a neighborhood \(U\) of \(T_0\) and a unique Fr\'echet differentiable function
\[
S: U \to \mathcal{B}(\rho(\vec{H}'), X)
\]
such that \(G(T, S(T)) = 0\) for all \(T \in U\). This function is precisely the feedback solution map \(S(T) = \rho(\mathcal{F}_{i,j})(T)\).

\medskip
\noindent\textbf{Step 4: Differentiability of the Composition}
We now have:
\begin{itemize}
    \item \(F: \mathcal{U} \to \mathcal{B}(\rho(\vec{H}') \otimes X, X)\) is Fr\'echet differentiable (Step 2)
    \item \(S: \mathcal{B}(\rho(\vec{H}') \otimes X, X) \to \mathcal{B}(\rho(\vec{H}'), X)\) is Fr\'echet differentiable (Step 3)
\end{itemize}
Therefore, their composition \(S \circ F: \mathcal{U} \to \mathcal{B}(\rho(\vec{H}'), X)\) is Fr\'echet differentiable.

Finally, for fixed \(\psi \in \rho(\vec{H})\), the evaluation map
\[
\mathrm{ev}_\psi: \mathcal{B}(\rho(\vec{H}'), X) \to X, \quad \mathrm{ev}_\psi(f) = f(\psi)
\]
is bounded linear, hence smooth. Therefore, the map
\[
\Psi(\psi, u) = \mathrm{ev}_\psi(S(F(u))) = \rho(\mathcal{F}_{i,j})(\rho(\mathrm{Ctrl}_u))(\psi)
\]
is Fr\'echet differentiable jointly in \((\psi, u)\).

\medskip
\noindent\textbf{Step 5: Smooth Parameter Dependence}
For each \(u \in \mathcal{U}\), define \(\Psi_u: \rho(\vec{H}) \to X\) by \(\Psi_u(\psi) = \Psi(\psi, u)\). The joint Fr\'echet differentiability of \(\Psi\) implies that the map
\[
u \mapsto \Psi_u \in \mathcal{B}(\rho(\vec{H}), X)
\]
is Fr\'echet differentiable, establishing smooth dependence on the control parameter.
\end{proof}

These corollaries bridge the gap between the discrete, combinatorial structure of the synergy operad and the continuous, analytic properties of its representations. They provide essential tools for analyzing the stability and dynamic behavior of complex controlled systems modeled within this framework.

\subsection{Geometric Corollaries}\label{sec:Geometric Corollaries} 

The structure of the synergy operad and its representations reveals deep geometric interpretations, framing system design and control in a topological and fiber-theoretic context.

\begin{corollary}[Diagrammatic Invariance under Coherent Moves]\label{cor:diagrammatic-invariance}
Let $D_1$ and $D_2$ be two wiring diagrams in $\mathbf{S}$ that are related by any sequence of the following diagrammatic moves:
\begin{itemize}
    \item Reordering disconnected components
    \item Sliding operations along wires
    \item Rescaling diagram elements
    \item Repositioning generators without changing connectivity
\end{itemize}
Then their semantic interpretations coincide:
\[
\rho(D_1) = \rho(D_2).
\]
This establishes that the semantic interpretation depends only on the combinatorial structure and connectivity of the diagram, not on its specific geometric layout.
\end{corollary}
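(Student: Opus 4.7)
The plan is to reduce each of the four listed geometric moves to a finite sequence of axiom applications from Axiom~\ref{ax:S3}, thereby obtaining a formal derivation $D_1 =_S D_2$ in the equational theory of $\mathbf{S}$, and then to invoke the Coherence Theorem~\ref{thm:coherence} to conclude $\rho(D_1) = \rho(D_2)$. The underlying principle is that a wiring diagram, modulo planar isotopy, is a purely combinatorial object (a labelled directed graph with marked feedback and control nodes); any two diagrams with the same connectivity data should therefore differ only by moves already accounted for by the symmetric monoidal structure together with the feedback and control naturality axioms.

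Concretely, I would dispatch the four moves in parallel. Reordering disconnected components is an instance of the braiding $\sigma_\pi$ combined with Monoidal Coherence (Axiom~\ref{ax:S3}(i)); by Mac Lane's theorem this corresponds to a unique composite of structural isomorphisms that is semantically identified under (SC1). Sliding a generator along a wire reduces to the standard interchange law $(f \otimes \mathrm{id}) \circ (\mathrm{id} \otimes g) = (\mathrm{id} \otimes g) \circ (f \otimes \mathrm{id})$ of any symmetric monoidal multicategory, together with Axiom~\ref{ax:S3}(iii) and~(v) whenever the slid generator must cross a feedback or control node. Rescaling diagram elements does not alter the underlying syntactic expression and therefore produces a reflexive equality, handled by the base case of Theorem~\ref{thm:coherence}. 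Repositioning generators without changing connectivity can be decomposed into a finite composite of braidings, interchange squares, and associators. Composing the resulting one-step derivations through transitivity and congruence yields $D_1 =_S D_2$, and the Coherence Theorem then delivers the semantic equality.

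The step I expect to be the main obstacle is rigorously formalizing the informal phrases ``sliding operations along wires'' and ``repositioning without changing connectivity'' as bona fide finite derivations in $=_S$. To make this precise I would first fix an explicit combinatorial model for wiring diagrams---for instance as acyclic labelled operadic graphs with distinguished feedback and control vertices---and then argue, via a Joyal--Street-style normal form argument for symmetric monoidal multicategories, that any two diagrams representing the same labelled graph are related by a finite composite of braidings, associators, and interchanges. The feedback and control generators are not themselves monoidal, so the real work is to establish a ``slide past any generator'' lemma which shows that $\mathcal{F}_{i,j}$ and $\mathrm{Ctrl}_u$ can always be moved across the monoidal skeleton using Axioms~\ref{ax:S3}(iii) and~(v); once that lemma is in place, the corollary follows mechanically from Theorem~\ref{thm:coherence}.
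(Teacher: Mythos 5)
Your proposal follows essentially the same route as the paper's proof: translate each of the four geometric moves into applications of the coherence relations in Axiom~\ref{ax:S3}, chain these via transitivity and congruence to derive $D_1 =_S D_2$, and then invoke Theorem~\ref{thm:coherence}. The only cosmetic divergence is that you treat rescaling as pure syntactic reflexivity while the paper models it via monoidal unit and associativity laws (e.g.\ $f \equiv_S f \otimes \mathrm{id}$); both readings are defensible, and your explicit flagging of the need for a Joyal--Street-style normal form and a ``slide past any generator'' lemma is a more candid acknowledgment of a gap that the paper's Step~2 also leaves informal.
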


\begin{proof}
We prove this by showing that each allowed diagrammatic move corresponds to applications of the coherence relations in Axiom~\ref{ax:S3}.

\noindent\textbf{Step 1: Explicit Correspondence Between Moves and Axioms}

\begin{enumerate}
    \item \textbf{Reordering disconnected components}: If $D_1$ and $D_2$ differ only by swapping two disconnected subsystems, then:
    \[
    D_1 = f \otimes g \quad\text{and}\quad D_2 = g \otimes f
    \]
    By Axiom (i) (monoidal coherence) and the symmetric structure, we have:
    \[
    f \otimes g \equiv_S \sigma \circ (g \otimes f)
    \]
    where $\sigma$ is the appropriate permutation isomorphism.

    \item \textbf{Sliding operations along wires}: If $D_1$ has a control operation before a permutation and $D_2$ has it after:
    \[
    D_1 = \mathrm{Ctrl}_u \circ \sigma_\pi \quad\text{and}\quad D_2 = \sigma_\pi \circ \mathrm{Ctrl}_u
    \]
    then Axiom (v) (Control-Permutation Naturality) directly gives:
    \[
    \mathrm{Ctrl}_u \circ \sigma_\pi \equiv_S \sigma_\pi \circ \mathrm{Ctrl}_u
    \]

    \item \textbf{Rescaling diagram elements}: Changes in relative sizes of diagram components correspond to applications of the monoidal unit and associativity laws from Axiom (i). For example, expanding a single wire into parallel identity wires:
    \[
    f \equiv_S f \otimes \mathrm{id} \otimes \mathrm{id} \equiv_S (f \otimes \mathrm{id}) \otimes \mathrm{id}
    \]

    \item \textbf{Repositioning generators}: Moving a generator while preserving all connections corresponds to naturality conditions. For instance, sliding a feedback operation past tensor products uses the compatibility of feedback with the monoidal structure.
\end{enumerate}

\noindent\textbf{Step 2: Formal Generation by Coherence Relations}

By the coherence theorem for symmetric monoidal categories, any two diagrams that are "planar isotopic" in the sense of preserving the combinatorial structure can be connected by a finite sequence of:
\begin{itemize}
    \item Applications of the symmetric monoidal coherence laws (Axiom (i))
    \item Applications of the generator-specific naturality conditions (Axioms (ii)--(vii))
    \item Applications of the defining equations of the symmetric monoidal structure
\end{itemize}

More formally: if there exists a continuous deformation from $D_1$ to $D_2$ that preserves:
\begin{itemize}
    \item The number and types of generators
    \item The connectivity pattern of wires
    \item The ordering of external ports
\end{itemize}
then the deformation can be decomposed into elementary moves, each of which is an instance of some relation in Axiom~\ref{ax:S3}.

\noindent\textbf{Step 3: Concrete Example}

Consider two diagrams differing only by sliding a control operation along a wire:

\begin{center}
\begin{tikzpicture}[node distance=1cm]
    \node[draw, rectangle] (ctrl1) at (0,0) {$\mathrm{Ctrl}_u$};
    \node[draw, rectangle] (perm1) at (2,0) {$\sigma_\pi$};
    \draw[->] (-1,0) -- (ctrl1);
    \draw[->] (ctrl1) -- (perm1);
    \draw[->] (perm1) -- (3,0);
    
    \node[draw, rectangle] (perm2) at (0,-1.5) {$\sigma_\pi$};
    \node[draw, rectangle] (ctrl2) at (2,-1.5) {$\mathrm{Ctrl}_u$};
    \draw[->] (-1,-1.5) -- (perm2);
    \draw[->] (perm2) -- (ctrl2);
    \draw[->] (ctrl2) -- (3,-1.5);
\end{tikzpicture}
\end{center}

These represent $D_1 = \mathrm{Ctrl}_u \circ \sigma_\pi$ and $D_2 = \sigma_\pi \circ \mathrm{Ctrl}_u$. By Axiom (v), we have $D_1 \equiv_S D_2$.

\noindent\textbf{Step 4: Application of Coherence Theorem}

Since each diagrammatic move corresponds to applications of relations in Axiom~\ref{ax:S3}, we have:
\[
D_1 \equiv_S D_2
\]
By Theorem~\ref{thm:coherence}, this implies:
\[
\rho(D_1) = \rho(D_2)
\]

\noindent\textbf{Step 5: Completeness of the Argument}

The converse is not necessarily true: there may be syntactically different diagrams that are semantically equivalent. However, the forward direction is what matters for diagrammatic invariance: diagrams that look the same in terms of their combinatorial structure must yield the same semantic interpretation.

This establishes that $\rho$ is well-defined on equivalence classes of diagrams modulo the allowed geometric deformations, making the diagrammatic calculus sound for reasoning about quantum control systems.
\end{proof}

\begin{corollary}[Fibration-Like Structure of System Spaces]\label{cor:fibration}
The canonical representation $\rho: \mathbf{S} \to \mathbf{HilbMult}$ induces a forgetful functor 
\[
p: \mathbf{S} \to \mathbf{Hilb}, \quad p(\vec{H}) = \rho(\vec{H}) = H_1 \otimes \cdots \otimes H_n,
\]
which exhibits fibration-like properties. For each Hilbert space $\mathcal{H} \in \mathbf{Hilb}$, the isomorphism class fiber
\[
p^{-1}[\mathcal{H}] = \bigl\{\vec{H} = (H_1, \dots, H_n) \in \mathrm{Ob}(\mathbf{S}) \ \big|\ H_1 \otimes \cdots \otimes H_n \cong \mathcal{H}\bigr\}
\]
consists of all syntactic configurations that realize aggregate systems isomorphic to $\mathcal{H}$.  
\end{corollary}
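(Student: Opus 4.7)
The plan is to construct $p$ as the composition $p = U \circ \rho$, where $\rho$ is the canonical representation functor of Theorem~\ref{thm:canonical-representation} and $U: \mathbf{HilbMult} \to \mathbf{Hilb}$ is the underlying-Hilbert-space functor that sends a tuple $(H_1,\dots,H_n)$ to its completed tensor product $H_1 \otimes \cdots \otimes H_n$ and each bounded multimorphism $T: H_1 \times \cdots \times H_n \to K$ to its curried bounded linear counterpart $\tilde T: H_1 \otimes \cdots \otimes H_n \to K$ via the universal property of the Hilbert tensor product (the currying data is exactly Axiom~\ref{ax:H6}). Functoriality of $p$ then follows from functoriality of $\rho$ (established in Step~3 of the proof of Theorem~\ref{thm:canonical-representation}) together with compatibility of the currying isomorphism with composition from Axiom~\ref{ax:H6}(a).

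Next I would unpack the fiber description. By the definition of $p$, a tuple $\vec{H}$ lies in $p^{-1}[\mathcal{H}]$ exactly when there is a unitary isomorphism $H_1 \otimes \cdots \otimes H_n \cong \mathcal{H}$, so that the fiber collects all syntactic port decompositions whose aggregate realization matches $\mathcal{H}$. Since separable Hilbert spaces are classified up to isomorphism by Hilbert dimension, fibers are indexed by multisets of cardinals whose tensor dimension matches $\dim \mathcal{H}$; the syntactic fiber is therefore typically enormous, containing refactorings of $\mathcal{H}$ into port tuples of every admissible arity and order.

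To substantiate the fibration-like claim I would next exhibit horizontal lifts of base morphisms. Given $f: \mathcal{H} \to \mathcal{K}$ in $\mathbf{Hilb}$ and objects $\vec{H} \in p^{-1}[\mathcal{H}]$, $\vec{K} \in p^{-1}[\mathcal{K}]$, the universal property of the tensor product produces a unique bounded multilinear representative, which by Theorem~\ref{thm:canonical-representation} lifts to a morphism $\tilde f \in \mathrm{Hom}_{\mathbf{S}}(\vec{H}; \vec{K})$ built from the generators of Axiom~\ref{ax:S2}. Two such lifts agree modulo $\equiv_S$ by Theorem~\ref{thm:coherence}, and hence descend to a single morphism in $\mathbf{S}/{\equiv_S}$ by Corollary~\ref{cor:functorial-factorization}. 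This gives the lifting property that justifies calling $p$ fibration-like: every base morphism admits lifts, and the space of lifts is coherent under $\equiv_S$.

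The main obstacle I anticipate is making \emph{fibration-like} precise without overreaching to a full Grothendieck fibration. The subtlety is that morphisms between tuples of different arities exist in $\mathbf{S}$ only when the wiring syntax permits it, so there is no canonical cleavage but rather a groupoid of reassociation and permutation isomorphisms within each fiber, generated by the structural isomorphisms of Axiom~\ref{ax:H5}. My plan is to formalize this by showing that each fiber carries the structure of the symmetric monoidal coherence groupoid, and that horizontal lifts exist but are determined only up to this groupoid action; invoking Mac Lane coherence through Theorem~\ref{thm:coherence} then collapses the ambiguity at the semantic level, so the induced map $p: \mathbf{S}/{\equiv_S} \to \mathbf{Hilb}$ satisfies the isofibration-style lifting property the corollary asserts.
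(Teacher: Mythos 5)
Your proposal follows the same overall architecture as the paper's proof: promote $p$ to a functor, describe the isomorphism-class fibers and their stratification by arity, exhibit lifts of base morphisms, and note that the structure is only \emph{fibration-like} rather than a strict Grothendieck fibration. Your intermediate factoring $p = U \circ \rho$ through a currying functor $U$ is a harmless elaboration: by Definition~\ref{def:representation} the object map $\rho(\vec H) = H_1 \otimes \cdots \otimes H_n$ already lands in a single Hilbert space, and a morphism $\rho(D)$ between such objects sits in a unary hom-space, \ie{} is a bounded linear map; the paper thus sets $p(D) = \rho(D)$ directly with no currying step.

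There is, however, a logical slip in your lifting argument. You cite Theorem~\ref{thm:coherence} to conclude that two lifts $\tilde f_1, \tilde f_2$ of a base morphism ``agree modulo $\equiv_S$.'' Theorem~\ref{thm:coherence} is a \emph{soundness} statement, $D_1 \equiv_S D_2 \Rightarrow \rho(D_1) = \rho(D_2)$; uniqueness of lifts requires the \emph{converse}, namely that $\rho(D_1) = \rho(D_2)$ forces $D_1 \equiv_S D_2$. That faithfulness is supplied (to the extent it is supplied at all) by the injectivity claim in Corollary~\ref{cor:functorial-factorization}, which your next sentence also cites; the logical weight should rest there, not on Theorem~\ref{thm:coherence}. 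Separately, both you and the paper treat the \emph{existence} of a lift $\tilde f$ of an arbitrary bounded linear map $f: \mathcal H \to \mathcal K$ as a consequence of Theorem~\ref{thm:canonical-representation}, but that theorem only constructs the functor $\rho$ and does not assert surjectivity or essential surjectivity --- the generators of Axiom~\ref{ax:S2} need not realize every bounded linear map. Since the paper's own proof shares this gap, your proposal matches its level of rigor, but the appeal to the representation theorem overreaches in both places.
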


\begin{proof}
We establish the fibration-like structure by verifying several key properties.

\noindent\textbf{Step 1: Functoriality and Fiber Definition}
The map $p$ extends to a functor by defining $p(D) = \rho(D)$ for morphisms $D: \vec{H} \to \vec{K}$. For each Hilbert space $\mathcal{H} \in \mathbf{Hilb}$, we define the \emph{isomorphism class fiber} $p^{-1}[\mathcal{H}]$ as the collection of objects $\vec{H} \in \mathrm{Ob}(\mathbf{S})$ with $p(\vec{H}) \cong \mathcal{H}$. 

Note that this is not a categorical fiber in the strict sense, but rather an equivalence class under Hilbert space isomorphism. Different elements of $p^{-1}[\mathcal{H}]$ represent distinct architectural decompositions of quantum systems that are isomorphic at the aggregate level.

\noindent\textbf{Step 2: Lifting of Isomorphisms}
A key fibration-like property is the lifting of isomorphisms: for any Hilbert space isomorphism $f: \mathcal{H} \xrightarrow{\cong} \mathcal{H}'$ and any $\vec{H} \in p^{-1}[\mathcal{H}]$, there exists a lift $D: \vec{H} \to \vec{K}$ with $p(D) = f$ and $\vec{K} \in p^{-1}[\mathcal{H}']$.

This follows from the completeness properties established in Theorem~\ref{thm:canonical-representation}. Since $f: p(\vec{H}) \to \mathcal{H}'$ is a bounded linear map between Hilbert spaces, the representation theorem guarantees we can construct a syntactic morphism $D$ realizing this semantic transformation.

\noindent\textbf{Step 3: Internal Structure of Fibers}
Within each isomorphism class fiber $p^{-1}[\mathcal{H}]$, we have rich categorical structure:

\begin{itemize}
    \item \textbf{Objects}: Different syntactic configurations $(H_1, \dots, H_n)$ with $H_1 \otimes \cdots \otimes H_n \cong \mathcal{H}$
    \item \textbf{Morphisms}: Control operations $D: \vec{H} \to \vec{K}$ where $p(D): p(\vec{H}) \xrightarrow{\cong} p(\vec{K})$ is an isomorphism
    \item \textbf{Equivalence}: Morphisms $D_1, D_2: \vec{H} \to \vec{K}$ with $\rho(D_1) = \rho(D_2)$ represent different syntactic patterns realizing the same semantic transformation
\end{itemize}

The coherence structure from Theorem~\ref{thm:coherence} ensures that within each fiber, we can identify:
\begin{itemize}
    \item Alternative feedback arrangements (Axiom (ii))
    \item Control reorganizations (Axiom (vi)) 
    \item Tensor reorderings (Axiom (i))
\end{itemize}
that yield identical semantic operators.

\noindent\textbf{Step 4: Stratification by Decomposition Complexity}
The fibers carry a natural stratification by the length $n$ of the tuple $\vec{H} = (H_1, \dots, H_n)$:
\[
p^{-1}[\mathcal{H}] = \bigcup_{n \geq 1} \{\vec{H} \in p^{-1}[\mathcal{H}] \mid \vec{H} \text{ has length } n\}
\]
Different levels represent decompositions into different numbers of subsystems. The symmetric group action (Axiom~\ref{ax:H5}(c)) creates orbits of isomorphic decompositions at each level.

\noindent\textbf{Step 5: Physical Interpretation - Multiple Realizability}
This fibration-like structure captures the fundamental physical phenomenon of \emph{multiple realizability} in quantum systems:
\begin{itemize}
    \item Different points in $p^{-1}[\mathcal{H}]$ represent alternative hardware decompositions or architectural designs
    \item The same aggregate quantum system $\mathcal{H}$ can be decomposed into subsystems in multiple ways
    \item Control operations provide pathways between different architectural realizations
    \item The lifting property ensures semantic transformations can be realized syntactically
\end{itemize}

\noindent\textbf{Step 6: Fibration-Like Versus Strict Fibration}
While $p: \mathbf{S} \to \mathbf{Hilb}$ exhibits several fibration-like features, it is not a strict Grothendieck fibration because:
\begin{itemize}
    \item The fibers are defined up to isomorphism rather than equality
    \item The cartesian lifting property holds for isomorphisms but not necessarily for arbitrary morphisms
    \item The multicategorical framework introduces additional structure beyond ordinary category theory
\end{itemize}

However, the essential conceptual features remain:
\begin{itemize}
    \item A projection functor relating syntactic and semantic levels
    \item Structured fibers parameterizing architectural realizations  
    \item Lifting of semantic transformations to syntactic operations
    \item Rich connectivity within and between fibers via control operations
\end{itemize}

In conclusion, $p: \mathbf{S} \to \mathbf{Hilb}$ establishes $\mathbf{S}$ as a fibered system over $\mathbf{Hilb}$, where the isomorphism class fibers $p^{-1}[\mathcal{H}]$ parameterize the space of architectural realizations of quantum systems isomorphic to $\mathcal{H}$. This structure provides a mathematical framework for studying multiple realizability and architectural flexibility in quantum information processing.
\end{proof}

\begin{remark}
While this is not a Grothendieck fibration in the strictest sense (due to the multicategorical nature of $\mathbf{S}$), it captures all the essential features: a projection functor with structured fibers and lifting properties. The coherence relations ensure that the fibration structure respects the semantic equivalences.
\end{remark}

\begin{corollary}[Control as Monoid Action on Operator Spaces]\label{cor:control-monoid-action}
The control operation $\mathrm{Ctrl}: (\vec{H}, \mathcal{U}) \to (\vec{H}')$ induces, via the canonical representation $\rho$, a family of maps:
\[
\Phi: \mathcal{U} \to \mathrm{Hom}_{\mathbf{HilbMult}}(\rho(\vec{H}), \rho(\vec{H}')), \quad \Phi(u) = \rho(\mathrm{Ctrl}_u).
\]
This family forms a monoid action of $(\mathcal{U}, \star, u_0)$ on the operator spaces, where:
\begin{itemize}
    \item The control space $\mathcal{U}$ serves as the \textbf{acting monoid}
    \item Each control parameter $u \in \mathcal{U}$ specifies a transformation $\Phi_u: \rho(\vec{H}) \to \rho(\vec{H}')$
    \item The sequential composition law $\mathrm{Ctrl}_{u_2} \circ \mathrm{Ctrl}_{u_1} = \mathrm{Ctrl}_{u_1 \star u_2}$ defines the \textbf{action compatibility}
\end{itemize}
This establishes control operations as structured transformations on the space of operator architectures.
\end{corollary}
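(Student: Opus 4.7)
The plan is to reduce the statement to a direct consequence of the Canonical Representation Theorem together with semantic coherence conditions (SC6) and (SC7), and then verify the two monoid action axioms componentwise. Because the target of $\Phi$ is a hom-space between (potentially) distinct objects $\rho(\vec{H})$ and $\rho(\vec{H}')$, the monoid action should first be formulated precisely: when $\vec{H}' = \vec{H}$, one obtains a genuine action on the endomorphism algebra $\mathcal{B}(\rho(\vec{H}))$; in the general case, one obtains a semigroupoid-style action on a family of hom-spaces indexed by iterated applications of $\mathrm{Ctrl}$. I would begin by isolating the endomorphism case, then extend.

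First, I would record the two defining identities. By (SC7),
\[
\Phi(u_0) \;=\; \rho(\mathrm{Ctrl}_{u_0}) \;=\; \mathrm{id}_{\rho(\vec{H})},
\]
which verifies the identity axiom of a monoid action. By (SC6),
\[
\Phi(u_2) \circ \Phi(u_1) \;=\; \rho(\mathrm{Ctrl}_{u_2}) \circ \rho(\mathrm{Ctrl}_{u_1}) \;=\; \rho(\mathrm{Ctrl}_{u_1 \star u_2}) \;=\; \Phi(u_1 \star u_2),
\]
which verifies the composition axiom. Well-definedness of each $\Phi(u)$ as an element of $\mathrm{Hom}_{\mathbf{HilbMult}}(\rho(\vec{H}), \rho(\vec{H}'))$ was already established in Step~4(c) of the Canonical Representation Theorem, which proved that $\rho(\mathrm{Ctrl}_u)$ is a bounded multilinear map.

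Next, I would invoke Corollary~\ref{cor:tensorial-distributivity} to strengthen this to a symmetric monoidal action: distributivity (SC4) and permutation naturality (SC5) guarantee that $\Phi(u)$ intertwines the tensor and symmetry structure of $\mathbf{HilbMult}$, so each $\Phi(u)$ is not merely a map of operator spaces but a symmetric monoidal endomorphism of the relevant component of $\rho(\mathbf{S})$. I would also note, via the Coherence Theorem~\ref{thm:coherence}, that independence of the syntactic choice of derivation $\mathrm{Ctrl}_{u_2} \circ \mathrm{Ctrl}_{u_1} \equiv_S \mathrm{Ctrl}_{u_1 \star u_2}$ ensures $\Phi$ factors through the quotient $\mathbf{S}/{\equiv_S}$, so the assignment $u \mapsto \Phi(u)$ is unambiguous.

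The main obstacle I anticipate is the subtle type-mismatch issue: as stated, $\mathrm{Ctrl}$ changes the object from $\vec{H}$ to $\vec{H}'$, so iterated composition $\Phi(u_2) \circ \Phi(u_1)$ only makes literal sense when the codomain of $\Phi(u_1)$ equals the domain of $\Phi(u_2)$. I would resolve this either by (a) restricting to the self-adjoint case where the control preserves the object (so $\vec{H}' = \vec{H}$ and $\Phi$ lands in $\mathcal{B}(\rho(\vec{H}))$), recovering a strict monoid action; or (b) interpreting the ``monoid action'' in the indexed/fibered sense along the lines of Corollary~\ref{cor:fibration}, viewing $\{\Phi(u)\}_{u \in \mathcal{U}}$ as a family of morphisms in a composable sequence with consistent domain-codomain chaining. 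In either interpretation, axioms (SC6) and (SC7) supply exactly the two equations needed, and the proof concludes.
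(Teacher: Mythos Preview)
Your proposal is correct and follows essentially the same approach as the paper: both reduce the monoid-action axioms to the semantic coherence conditions (SC6) and (SC7), then invoke (SC4)--(SC5) for compatibility with the symmetric monoidal structure. The paper additionally cites currying (Axiom~\ref{ax:H6}) to formalize the passage from $\rho(\mathrm{Ctrl})$ to the parametrized family $\Phi$, while your explicit treatment of the domain--codomain type mismatch is, if anything, more careful than the paper's own handling of that subtlety.
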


\begin{proof}
We demonstrate that the control operations induce a well-defined monoid action on the semantic spaces.

The control generator $\mathrm{Ctrl}: (\vec{H}, \mathcal{U}) \to (\vec{H}')$ from Axiom~\ref{ax:S2} under the canonical representation $\rho$ from Theorem~\ref{thm:canonical-representation} gives:
\[
\rho(\mathrm{Ctrl}): \rho(\vec{H}) \otimes \mathcal{U} \to \rho(\vec{H}').
\]
By currying (Axiom~\ref{ax:H6}), we obtain the parameterized family:
\[
\Phi: \mathcal{U} \to \mathrm{Hom}_{\mathbf{HilbMult}}(\rho(\vec{H}), \rho(\vec{H}')), \quad \Phi(u) = \rho(\mathrm{Ctrl}_u).
\]

We verify this defines a monoid action. First, the identity control $u_0 \in \mathcal{U}$ acts as the identity transformation. By Axiom~\ref{ax:S3}(vii) and Semantic Condition (SC7):
\[
\Phi(u_0) = \rho(\mathrm{Ctrl}_{u_0}) = \mathrm{id}_{\rho(\vec{H})}.
\]

Second, the action respects monoid composition. By Axiom~\ref{ax:S3}(vi) and Semantic Condition (SC6):
\[
\mathrm{Ctrl}_{u_2} \circ \mathrm{Ctrl}_{u_1} \equiv_S \mathrm{Ctrl}_{u_1 \star u_2},
\]
which under the representation $\rho$ gives:
\[
\Phi(u_2) \circ \Phi(u_1) = \rho(\mathrm{Ctrl}_{u_2}) \circ \rho(\mathrm{Ctrl}_{u_1}) = \rho(\mathrm{Ctrl}_{u_1 \star u_2}) = \Phi(u_1 \star u_2).
\]

This establishes that $\Phi: \mathcal{U} \to \mathrm{End}(\rho(\vec{H}))$ is a monoid homomorphism, where $\mathrm{End}(\rho(\vec{H}))$ denotes the monoid of endomorphisms under composition.

The control action respects the additional structure through the coherence relations. Control-Permutation Naturality (Axiom (v)) ensures compatibility with symmetric rearrangements:
\[
\mathrm{Ctrl}_u \circ \sigma_{\pi} \equiv_S \sigma_{\pi} \circ \mathrm{Ctrl}_u.
\]
Control-Tensor Distributivity (Axiom (iv)) shows the action respects independent subsystems:
\[
\mathrm{Ctrl}_u(\vec{H} \otimes \vec{K}) \equiv_S \mathrm{Ctrl}_u(\vec{H}) \otimes \mathrm{Ctrl}_u(\vec{K}).
\]

For the special case where control operations preserve the total Hilbert space (i.e., $\rho(\vec{H}') = \rho(\vec{H})$), we obtain an action on the endomorphism space $\mathrm{End}_{\mathbf{HilbMult}}(\rho(\vec{H}))$. The set of all such control-induced transformations forms a submonoid:
\[
\mathcal{C}_{\vec{H}} = \{\Phi(u) \mid u \in \mathcal{U},\ \rho(\mathrm{Ctrl}_u(\vec{H})) = \rho(\vec{H})\} \subseteq \mathrm{End}(\rho(\vec{H})).
\]

This submonoid captures all internal reorganizations achievable through control while preserving the aggregate system. If the control monoid has inverses, this becomes a group action; otherwise, it remains a monoid action capturing the directed nature of control transformations.

Physically, this structure provides a mathematical framework for understanding control operations as parameterized transformations on quantum systems. The monoid structure reflects the sequential nature of control protocols, while the coherence conditions ensure these transformations respect the architectural constraints of the system. This establishes control theory within $\mathbf{S}$ as the study of structured monoid actions on operator spaces, with the control parameters $\mathcal{U}$ indexing the available transformations.
\end{proof}

The geometric corollaries establish a powerful framework for quantum control system design by revealing deep structural properties of the synergy operad. First, in Corollary~\ref{cor:diagrammatic-invariance}, we answer the question about when diagrammatic reasoning becomes sound: semantically equivalent wiring diagrams yield identical quantum operations, enabling intuitive visual design while maintaining mathematical rigor. Second,  we investigate the multiple realizability of quantum systems through a fibration-like structure in Corollary~\ref{cor:fibration}. Each fiber represents the space of alternative architectural decompositions for the same aggregate Hilbert space. Finally, at Corollary~\ref{cor:control-monoid-action}, we characterize control operations as structured monoid actions, providing an algebraic foundation for control protocols where sequential composition corresponds to monoid multiplication. The results discussed in this section transform quantum control design from ad-hoc construction to systematic exploration of architectural possibilities, with guaranteed semantic consistency across different implementation choices and control strategies.

\subsection{Topological Corollaries}\label{sec:Topological Corollaries}

Based on above theoretical framework, here are several corollaries that develop the topological aspects of the synergy operad and its representation:

\begin{lemma}[Continuity of Composition]\label{lem:composition-continuity}
Let $\Gamma_{n,m}$ and $\Gamma_{m,k}$ be finite boundary-collared graphs representing wiring skeletons.
Then the operadic composition
\[
    \circ : \mathrm{Emb}_\partial(\Gamma_{m,k}, \mathbb{R}^2) \times
            \mathrm{Emb}_\partial(\Gamma_{n,m}, \mathbb{R}^2)
            \longrightarrow 
            \mathrm{Emb}_\partial(\Gamma_{n,k}, \mathbb{R}^2)
\]
defined by boundary gluing is continuous with respect to the $C^1$-Whitney topology.
\end{lemma}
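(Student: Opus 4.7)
The plan is to reduce continuity of the gluing operation to three continuous sub-operations and then combine them. Write a boundary-collared embedding as a pair $(\phi, \phi|_{\mathcal N})$ where $\mathcal N$ is a fixed collar neighborhood of the boundary of the underlying graph. By the collaring hypothesis, there are fixed collar parametrizations $c^{\mathrm{out}}_\phi : \partial^{\mathrm{out}}\Gamma_{n,m} \times [0,\varepsilon) \to \Gamma_{n,m}$ and $c^{\mathrm{in}}_\psi : \partial^{\mathrm{in}}\Gamma_{m,k} \times (-\varepsilon, 0] \to \Gamma_{m,k}$, so every embedding is smoothly pinned down on these collars by the port identifications.

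First, I would factor the composition $\circ$ as the successive application of (i) a reparametrization $R$ that aligns the outgoing collar of $\phi$ with the incoming collar of $\psi$ via the fixed port matching, (ii) a rigid translation/rotation $A(\phi,\psi)$ chosen to place the incoming collar of $\psi$ on top of the image of the outgoing collar of $\phi$, and (iii) a smooth interpolation $I$ on the overlap using a fixed partition of unity $\{\eta, 1-\eta\}$ supported in the collar. Step (i) is continuous in the $C^1$-Whitney topology because reparametrization by a fixed diffeomorphism is a topological-linear operation on $C^1(\Gamma, \mathbb{R}^2)$. Step (ii) is continuous because $A(\phi, \psi)$ depends continuously (in fact smoothly) on the collar jets of $\phi$ and $\psi$, which in turn depend $C^1$-continuously on $\phi, \psi$ by the very definition of the Whitney topology. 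Step (iii) is the linear map $(f,g) \mapsto \eta f + (1-\eta) g$ on the overlap, which is continuous and even bounded in $C^1$ because $\eta$ is smooth with fixed bounded derivative.

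Next, I would verify that the composite map indeed lands in $\mathrm{Emb}_\partial(\Gamma_{n,k}, \mathbb{R}^2)$ rather than merely in $C^1(\Gamma_{n,k}, \mathbb{R}^2)$. Injectivity and immersivity are open conditions in the $C^1$-Whitney topology on a finite graph, and for the nominal pair $(\phi, \psi)$ whose gluing is an embedding by assumption, these open conditions persist on a neighborhood. Hence the image of $\circ$ actually lies in the subspace of embeddings, which is what the lemma requires.

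The main obstacle, and the step I expect to take the most care with, is step (iii): showing that the interpolation preserves the embedding property and does so uniformly over a neighborhood of $(\phi,\psi)$. The smooth boundary-collar hypothesis is the crucial tool here, because on the collar both $R(\phi)$ and $A(\phi,\psi) \cdot \psi$ agree with the fixed boundary port embedding up to the collar width, so their $C^1$-difference is $O(\|\phi - \phi_0\|_{C^1} + \|\psi - \psi_0\|_{C^1})$. Consequently the interpolant differs from either input by the same small $C^1$-amount, and the Jacobian remains of full rank and the map injective in a neighborhood. Combining the continuity of the three sub-operations with this local stability of the embedding condition yields continuity of $\circ$ in the $C^1$-Whitney topology, completing the proof.
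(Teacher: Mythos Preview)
Your proposal is correct and follows essentially the same route as the paper: both construct the glued map by bump-function interpolation on fixed collar neighborhoods and deduce $C^1$-Whitney continuity from the smooth (in fact linear) dependence of the interpolant on the two inputs, treating the regions away from the collar separately. The paper omits your rigid-motion step (ii) because the $\partial$-subscript already pins the boundary values (so $A(\phi,\psi)$ is the identity there), and it is less explicit than you are about checking that the interpolant lands in the embedding space via openness of injectivity and immersivity, but the overall architecture is the same.
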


\begin{proof}
We prove continuity by constructing a local coordinate system for the gluing and analyzing the dependence on input embeddings.

\noindent\textbf{Step 1: Standardized Gluing Construction}
Fix a standard gluing data: for the common $m$-element boundary, choose collar neighborhoods $U \subset \Gamma_{n,m}$ of $\partial_{\mathrm{out}}(\Gamma_{n,m})$ and $V \subset \Gamma_{m,k}$ of $\partial_{\mathrm{in}}(\Gamma_{m,k})$, both diffeomorphic to $m$ copies of $[0,1)$, with coordinates $(t,i)$ where $t \in [0,1)$ and $i \in \{1,\dots,m\}$ indexes boundary components.

The composition $\Gamma_{n,k} = \Gamma_{n,m} \cup \Gamma_{m,k}$ is defined by identifying $(t,i) \in U$ with $(1-t,i) \in V$ for $t \in (0,1)$, and the boundaries at $t=0$.

\noindent\textbf{Step 2: Composition Map Definition}
Given $f \in \mathrm{Emb}_\partial(\Gamma_{n,m}, \mathbb{R}^2)$ and $g \in \mathrm{Emb}_\partial(\Gamma_{m,k}, \mathbb{R}^2)$, the composite $g \circ f \in \mathrm{Emb}_\partial(\Gamma_{n,k}, \mathbb{R}^2)$ is defined by:
\[
(g \circ f)(x) = 
\begin{cases}
f(x) & \text{if } x \in \Gamma_{n,m} \setminus U \\
(1-\alpha(t))f(t,i) + \alpha(t)g(1-t,i) & \text{if } x = (t,i) \in U \\
g(x) & \text{if } x \in \Gamma_{m,k} \setminus V
\end{cases}
\]
where $\alpha: [0,1] \to [0,1]$ is a fixed smooth bump function with $\alpha(0)=0$, $\alpha(1)=1$, and all derivatives vanishing at endpoints.

\noindent\textbf{Step 3: Whitney Topology Framework}
Recall the $C^1$-Whitney topology has basis neighborhoods:
\[
N(f; \{(K_i, \epsilon_i)\}) = \{g \in \mathrm{Emb}_\partial : \|g - f\|_{C^1(K_i)} < \epsilon_i \text{ for all } i\}
\]
where $\{K_i\}$ is a locally finite family of compact sets covering the domain.

\noindent\textbf{Step 4: Local Continuity Analysis}
Fix $(f_0, g_0)$ and consider a neighborhood of $h_0 = g_0 \circ f_0$. We cover $\Gamma_{n,k}$ with three compact families:
\begin{itemize}
    \item $A$: $\Gamma_{n,m} \setminus U$ (away from gluing region)
    \item $B$: A compact neighborhood of the gluing region in $U$
    \item $C$: $\Gamma_{m,k} \setminus V$ (away from gluing region)
\end{itemize}

On region $A$, $(g \circ f)|_A = f|_A$, so continuity follows from continuity of restriction.

On region $C$, $(g \circ f)|_C = g|_C$, similarly continuous.

On region $B$, the composite is:
\[
(g \circ f)(t,i) = (1-\alpha(t))f(t,i) + \alpha(t)g(1-t,i)
\]
This is a smooth function of $f$ and $g$ and their derivatives, hence continuous in the $C^1$ topology.

\noindent\textbf{Step 5: Uniform Estimates}
For any compact $K \subset \Gamma_{n,k}$, we can find $\delta > 0$ such that if $\|f - f_0\|_{C^1} < \delta$ and $\|g - g_0\|_{C^1} < \delta$ on appropriate domains, then $\|g \circ f - g_0 \circ f_0\|_{C^1(K)} < \epsilon$.

The key observation is that on each compact set, the composition depends smoothly on finitely many $C^1$ norms of $f$ and $g$, giving local Lipschitz bounds.

\noindent\textbf{Step 6: Boundary Condition Preservation}
The construction preserves boundary conditions: input boundaries come from $\Gamma_{n,m}$ embedded via $f$, output boundaries from $\Gamma_{m,k}$ embedded via $g$, and the gluing interpolation preserves $C^1$ matching at boundaries due to the bump function properties.

Since continuity holds for each compact set in a locally finite cover, the composition is continuous in the Whitney topology.
\end{proof}

\begin{lemma}[Lipschitz Continuity of Feedback Operation]\label{lem:feedback-continuity}
Let $F : H \times U \to H$ be a map between Hilbert spaces, satisfying the following properties:
1.  Uniform Contraction in $x$: $F(\cdot, u)$ is a contraction uniformly in $u$, i.e., there exists $\alpha < 1$ such that for all $x, y \in H$ and all $u \in U$:
    \[
    \|F(x, u) - F(y, u)\| \le \alpha \|x - y\|.
    \]
2.  Lipschitz Continuity in $u$: $F$ is Lipschitz continuous in $u$, uniformly in $x$, i.e., there exists $L > 0$ such that for all $x \in H$ and all $u_1, u_2 \in U$:
    \[
    \|F(x, u_1) - F(x, u_2)\| \le L \|u_1 - u_2\|.
    \]

Then, the induced fixed-point map $\Phi : U \to H$, defined by $\Phi(u) = x^*(u)$ where $x^*(u)$ is the unique solution to $F(x^*(u), u) = x^*(u)$, is Lipschitz continuous. Specifically, for any $u_1, u_2 \in U$:
\[
\|\Phi(u_1) - \Phi(u_2)\| \le \frac{L}{1-\alpha} \|u_1 - u_2\|.
\]
\end{lemma}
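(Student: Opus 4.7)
The plan is to invoke the Banach fixed-point theorem first to confirm that $\Phi$ is well-defined as a single-valued map $U \to H$: assumption (1) says that for each fixed $u$, the map $F(\cdot,u)$ is an $\alpha$-contraction on the Banach space $H$, so a unique $x^*(u)$ exists. This preliminary step is essentially free but worth stating because the rest of the argument manipulates these fixed points as though they are already known to exist.

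Next, I would directly estimate $\|\Phi(u_1) - \Phi(u_2)\|$ for arbitrary $u_1, u_2 \in U$. Writing $x_i := \Phi(u_i) = F(x_i, u_i)$ for $i=1,2$, the key trick is to insert the intermediate term $F(x_2, u_1)$ and apply the triangle inequality:
\[
\|x_1 - x_2\| = \|F(x_1, u_1) - F(x_2, u_2)\| \le \|F(x_1,u_1) - F(x_2,u_1)\| + \|F(x_2,u_1) - F(x_2,u_2)\|.
\]
The first term is controlled by assumption (1) (uniform contraction in the state variable) yielding $\alpha\|x_1-x_2\|$, and the second is controlled by assumption (2) (Lipschitz in the parameter) yielding $L\|u_1-u_2\|$.

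Then I would absorb the $\alpha\|x_1 - x_2\|$ term onto the left-hand side, obtaining $(1-\alpha)\|x_1-x_2\| \le L\|u_1-u_2\|$. Since $\alpha < 1$, dividing through by $1-\alpha$ yields the claimed bound $\|\Phi(u_1)-\Phi(u_2)\| \le \tfrac{L}{1-\alpha}\|u_1-u_2\|$.

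I do not anticipate any serious obstacle here: the argument is the classical parameter-dependent Banach fixed-point estimate and each step uses only an assumption already available in the hypotheses. The only subtle point worth flagging is that the triangle-inequality insertion must be chosen so that the contraction bound applies to matching $u$-values and the $u$-Lipschitz bound applies to matching $x$-values; insertion of $F(x_1,u_2)$ instead of $F(x_2,u_1)$ works symmetrically and yields the same final constant $L/(1-\alpha)$.
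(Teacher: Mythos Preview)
Your proposal is correct and matches the paper's own proof essentially step for step: invoke Banach's fixed-point theorem for well-definedness, insert the intermediate term $F(x_2,u_1)$ in the triangle inequality, apply the contraction bound and the $u$-Lipschitz bound to the two pieces, and absorb the $\alpha\|x_1-x_2\|$ term to the left. There is nothing to add or correct.
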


\begin{proof}
By the Banach Fixed-Point Theorem, the uniform contraction property (1) guarantees that for every $u \in U$, the map $F(\cdot, u)$ has a unique fixed point $x^*(u)$.

Now, let $u_1, u_2 \in U$ be arbitrary. Using the fixed-point property and the triangle inequality, we have:
\[
\begin{aligned}
\|x^*(u_1) - x^*(u_2)\| &= \|F(x^*(u_1), u_1) - F(x^*(u_2), u_2)\| \\
&\le \|F(x^*(u_1), u_1) - F(x^*(u_2), u_1)\| + \|F(x^*(u_2), u_1) - F(x^*(u_2), u_2)\|.
\end{aligned}
\]

We now apply the two given properties to bound each term:
\begin{itemize}
\item By the uniform contraction in $x$ (Property 1):
    \[
    \|F(x^*(u_1), u_1) - F(x^*(u_2), u_1)\| \le \alpha \|x^*(u_1) - x^*(u_2)\|.
    \]
\item By the uniform Lipschitz continuity in $u$ (Property 2):
    \[
    \|F(x^*(u_2), u_1) - F(x^*(u_2), u_2)\| \le L \|u_1 - u_2\|.
    \]
\end{itemize}

Substituting these inequalities back gives:
\[
\|x^*(u_1) - x^*(u_2)\| \le \alpha \|x^*(u_1) - x^*(u_2)\| + L \|u_1 - u_2\|.
\]

Rearranging terms, we find:
\[
(1 - \alpha) \|x^*(u_1) - x^*(u_2)\| \le L \|u_1 - u_2\|,
\]
and since $(1 - \alpha) > 0$, we conclude:
\[
\|x^*(u_1) - x^*(u_2)\| \le \frac{L}{1-\alpha} \|u_1 - u_2\|.
\]

This establishes that $\Phi$ is Lipschitz continuous, which in particular implies continuity in the norm topology.
\end{proof}

\begin{lemma}[Continuity of the Representation]\label{lem:rho-continuity}
Let $\rho : \mathrm{Hom}_{\mathbf{S}}(\vec{H}, \vec{K}) \to \mathcal{B}(\mathcal{H})$ be the representation mapping wiring diagrams to their induced operators via the prescribed compositional rules. Assume:
\begin{enumerate}
    \item The space of wiring diagrams $\mathrm{Hom}_{\mathbf{S}}(\vec{H}, \vec{K})$ carries the quotient topology induced from the $C^1$-Whitney topology on smooth embeddings, modulo isotopy.
    \item For each atomic wiring diagram (generator), the operator assignment is norm-continuous.
    \item The compositional rules (specifically, tensor product and composition) are defined by continuous operations in the diagram topology.
\end{enumerate}
Then $\rho$ is continuous with respect to the operator-norm topology on $\mathcal{B}(\mathcal{H})$.
\end{lemma}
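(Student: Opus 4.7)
The plan is to prove continuity by structural induction on the syntactic construction of wiring diagrams in $\mathbf{S}$. Since every morphism in $\mathrm{Hom}_{\mathbf{S}}(\vec{H}, \vec{K})$ is built by finite application of the primitive generators in Axiom~\ref{ax:S2} (identity, tensor, permutation, feedback, control) combined via sequential composition and tensoring, I would first fix a diagram $D_0$ together with a numerical tolerance $\epsilon > 0$ and represent $D_0$ as a finite syntactic tree whose leaves are atomic generators and whose internal nodes are combinators. The base case is then immediate from hypothesis~(2), which supplies a norm-continuous operator assignment on each atom, and I would express the rest of the argument as a propagation of continuity moduli along the tree.

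Next, I would handle each combinator individually. For sequential composition $D_2 \circ D_1$, I would invoke Lemma~\ref{lem:composition-continuity} to control the diagrammatic perturbation, and use that the map $(A,B) \mapsto A \circ B$ is jointly norm-continuous on norm-bounded subsets of $\mathcal{B}(\mathcal{H})$; hypothesis~(2) together with Step~4 of Theorem~\ref{thm:canonical-representation} supplies local boundedness along each branch of the tree. For the tensor combinator I would use that $\|A \otimes B\| = \|A\|\|B\|$ and the bilinearity of $\otimes$ to get joint norm-continuity. Permutation is interpreted via a fixed unitary, so its assignment is trivially continuous. For the feedback combinator I would invoke Lemma~\ref{lem:feedback-continuity}: under the contractive well-posedness condition already established in Step~4(b) of Theorem~\ref{thm:canonical-representation}, the fixed-point solution operator depends Lipschitz continuously on its defining data, and composing this Lipschitz dependence with the continuity of the inner assignment yields continuity of the feedback node. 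The control combinator is handled analogously via Corollary~\ref{cor:control-continuity}.

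The main obstacle is reconciling the quotient topology on $\mathrm{Hom}_{\mathbf{S}}(\vec{H},\vec{K})$ with the norm topology on $\mathcal{B}(\mathcal{H})$. By hypothesis~(1), diagrams carry the quotient topology from $C^1$-Whitney on embeddings modulo isotopy, so I must first verify that $\rho$ is well-defined on equivalence classes; this follows from the Coherence Theorem~\ref{thm:coherence}, which ensures that isotopic (hence $\equiv_S$-related) diagrams produce the same operator. I would then appeal to the universal property of the quotient topology: a map from the quotient is continuous if and only if its lift to the pre-quotient space of embeddings is continuous. The inductive argument above establishes continuity of the lift, and the coherence-theoretic invariance ensures factorization through the quotient.

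Finally, I would assemble the pieces by a straightforward $\epsilon/N$ distribution across the finite syntactic tree of $D_0$, choosing neighborhoods of each internal assignment small enough that the cumulative error under the continuous combinators stays below $\epsilon$. Since the tree is finite and each of the finitely many combinators contributes a continuous modulus, this yields an open neighborhood of $D_0$ in the quotient topology whose image under $\rho$ lies within the $\epsilon$-ball of $\rho(D_0)$, establishing continuity at $D_0$ and, by arbitrariness of $D_0$, on the whole morphism space.
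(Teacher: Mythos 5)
Your proof is correct and follows essentially the same structural-induction route as the paper: base case from hypothesis~(2) on atomic generators, propagation through composition and tensoring via joint norm-continuity on bounded sets, and passage to the quotient topology via well-definedness on isotopy classes. The only cosmetic differences are that you treat feedback and control explicitly as combinators (invoking Lemmas~\ref{lem:feedback-continuity} and Corollary~\ref{cor:control-continuity}), whereas the paper folds them into the atomic case covered by hypothesis~(2), and you spell out the universal-property step for the quotient topology a bit more carefully than the paper does; neither change alters the argument in substance.
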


\begin{proof}
We prove continuity by induction on the structure of wiring diagrams.

\noindent\textbf{Step 1: Base Case (Atomic Diagrams).} 
The generators of the operad are the atomic wiring diagrams. By \textbf{hypothesis (2)}, the map $w \mapsto \rho(w)$ is norm-continuous for each atomic element $w$. This establishes the base case.

\noindent\textbf{Step 2: Continuity of Algebraic Operations.}
We recall that the algebraic operations used to build operators are well-behaved in the norm topology:
\begin{itemize}
    \item \textbf{Composition:} For operators $A, A', B, B'$ with $\|A\|, \|A'\|, \|B\|, \|B'\| \leq M$, we have
    \[
    \|A \circ B - A' \circ B'\| \leq \|A\|\|B - B'\| + \|A - A'\|\|B'\| \leq M(\|B - B'\| + \|A - A'\|).
    \]
    \item \textbf{Tensor Product:} Similarly,
    \[
    \|A \otimes B - A' \otimes B'\| \leq \|A\|\|B - B'\| + \\|A - A'\|\\|B'\| \leq M(\|B - B'\| + \|A - A'\|).
    \]
\end{itemize}
Thus, both operations are jointly continuous on bounded subsets.

\noindent\textbf{Step 3: Inductive Step (Composite Diagrams).}
Every composite wiring diagram is obtained from atomic elements through finitely many applications of composition ($\circ$) and tensor product ($\otimes$). The representation $\rho$ respects this structure:
\[
\rho(D_1 \circ D_2) = \rho(D_1) \circ \rho(D_2), \quad \rho(D_1 \otimes D_2) = \rho(D_1) \otimes \rho(D_2).
\]

We proceed by induction on the number of operations:
\begin{itemize}
    \item \textbf{Base case:} Atomic diagrams (established in Step 1).
    \item \textbf{Inductive step:} Assume $\rho$ is continuous for diagrams $D_1$ and $D_2$. Then:
    \begin{itemize}
        \item For $D_1 \circ D_2$: By \textbf{hypothesis (3)}, composition of diagrams is continuous. By the inductive hypothesis and continuity of operator composition (Step 2), $\rho(D_1 \circ D_2)$ depends continuously on $D_1$ and $D_2$.
        \item For $D_1 \otimes D_2$: Similarly, by \textbf{hypothesis (3)} and continuity of the operator tensor product, $\rho(D_1 \otimes D_2)$ is continuous.
    \end{itemize}
\end{itemize}

\noindent\textbf{Step 4: Handling the Quotient Topology.}
By \textbf{hypothesis (1)}, the topology on wiring diagrams is the quotient topology from the $C^1$-Whitney topology modulo isotopy. Since $\rho$ is well-defined on isotopy classes (being a functorial representation), the continuity properties established in the previous steps for representatives implies continuity on the quotient space. More precesie, if $[D_n] \to [D]$ in the quotient topology, then there exist representatives $D_n' \to D'$ with $[D_n'] = [D_n]$ and $[D'] = [D]$, and thus $\rho(D_n') \to \rho(D')$ in norm.

\noindent\textbf{Step 5: Global Continuity via Structural Induction.}
The finite generation property of the wiring diagram operad ensures that every diagram is obtained through finitely many applications of the compositional operations. Since:
\begin{itemize}
\item The base case (atomic diagrams) is continuous by (2),
\item Each compositional operation is continuous by (3) and preserves operator continuity (Step 2),
\item The quotient topology (1) correctly captures the physical equivalence of diagrams,
\end{itemize}
the representation $\rho$ is continuous globally. The inductive argument covers all diagrams uniformly because the number of operations needed to construct any given diagram is finite, and the operator norms remain locally bounded throughout the construction process.

Therefore, under hypotheses (1)--(3), $\rho$ is continuous.
\end{proof}

\begin{corollary}[Topological Realization of the Synergy Operad]\label{cor:topological-realization}
Let $\mathcal{H}$ be a fixed separable Hilbert space, and let $\mathbf{S}_0$ denote the synergy operad defined algebraically in Section~\ref{sec:The synergy operad}. We construct a topological operad $\mathbf{S}$ as follows:

\begin{enumerate}
    \item \textbf{Objects:} The space of objects is
    \[
        \mathrm{Ob}(\mathbf{S}) = \bigsqcup_{n \ge 0} \mathrm{Gr}(\mathcal{H})^n / \Sigma_n,
    \]
    where $\mathrm{Gr}(\mathcal{H})$ is the Grassmannian endowed with the \emph{gap topology} (equivalently, the topology induced by the norm topology on orthogonal projections).

    \item \textbf{Morphisms:} For objects $\vec{H} = (H_1, \dots, H_n)$ and $\vec{K} = (K_1, \dots, K_m)$, the morphism space is
    \[
        \mathrm{Hom}_{\mathbf{S}}(\vec{H}, \vec{K})
        = \mathrm{Emb}_\partial(\Gamma_{n,m}, \mathbb{R}^2) / \!\!\sim,
    \]
    where $\mathrm{Emb}_\partial$ carries the $C^1$-Whitney topology and $\sim$ denotes planar isotopy rel boundary. The quotient is endowed with the quotient topology.

    \item \textbf{Control Space:} For control operations, we assume a topological monoid $\mathcal{U}$ acting continuously on wiring diagrams via maps continuous in the Whitney topology.
\end{enumerate}

Assume the following technical results:
\begin{itemize}
    \item[(L1)] Operadic composition of boundary-collared embeddings is continuous (Lemma~\ref{lem:composition-continuity}).
    \item[(L2)] The fixed-point map for feedback operations is continuous (Lemma~\ref{lem:feedback-continuity}).
    \item[(L3)] The operator representation $\rho$ for atomic elements is norm-continuous (Lemma~\ref{lem:rho-continuity}).
\end{itemize}

Furthermore, assume all operadic operations (composition, tensor, feedback, control) are \emph{isotopy-invariant}: if $f \sim f'$ and $g \sim g'$ are isotopic embeddings, then their images under any operadic operation remain isotopic.

Then:
\begin{enumerate}
    \item $\mathbf{S}$ is a \textbf{topological operad}.
    \item The connected components classify wiring diagrams up to planar isotopy:
    \[
        \pi_0\left(\mathrm{Hom}_{\mathbf{S}}(\vec{H}, \vec{K})\right)
        \cong \left\{ \text{Planar isotopy classes of wiring diagrams from $\vec{H}$ to $\vec{K}$} \right\}.
    \]
    \item The representation $\rho$ extends to a \textbf{continuous operad morphism}
    \[
        \rho : \mathbf{S} \longrightarrow \mathbf{HilbMult},
    \]
    where $\mathbf{HilbMult}$ has the operator-norm topology.
\end{enumerate}
\end{corollary}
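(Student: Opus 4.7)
The plan is to establish the three assertions sequentially, treating (1) as the structural foundation, (2) as a direct consequence of the quotient construction, and (3) as an inductive extension of Lemma~\ref{lem:rho-continuity} via the Coherence Theorem~\ref{thm:coherence}.

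First I would verify that $\mathbf{S}$ is a topological operad. The algebraic operad structure on $\mathbf{S}_0$ gives, on each morphism set, the required composition, unit, symmetric group action, feedback, and control operations. The issue is purely topological: I must show each of these operations descends to a continuous map on the quotient spaces $\mathrm{Emb}_\partial(\Gamma_{n,m},\mathbb{R}^2)/\!\sim$. For composition, continuity on the unquotiented Whitney-topologized embedding spaces is exactly Lemma~\ref{lem:composition-continuity}; the isotopy-invariance hypothesis then shows the composition descends to the quotient, and the universal property of the quotient topology gives continuity of the induced map. The symmetric group action on labeled ports is continuous because relabelings act by homeomorphisms on $\mathrm{Emb}_\partial$. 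The feedback operation is continuous by Lemma~\ref{lem:feedback-continuity} combined with the assumed continuity of the gluing embedding (a special case of Lemma~\ref{lem:composition-continuity}). The control action's continuity is built into the assumption that $\mathcal{U}$ acts continuously in the Whitney topology. The associativity, unit, and equivariance axioms required of a topological operad already hold in $\mathbf{S}_0$ and are preserved verbatim after topologization.

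Second I would address the classification of connected components. By definition of the Whitney $C^1$-topology, $\mathrm{Emb}_\partial(\Gamma_{n,m},\mathbb{R}^2)$ is locally path-connected, and two embeddings lie in the same path-component precisely when they are connected by a continuous one-parameter family of embeddings, i.e., by a planar isotopy rel boundary. Thus the path-components of $\mathrm{Emb}_\partial(\Gamma_{n,m},\mathbb{R}^2)$ are exactly the planar isotopy classes. Passing to the quotient by $\sim$ collapses each isotopy class to a single point, so the resulting quotient space has $\pi_0$ in bijection with the set of isotopy classes of wiring diagrams from $\vec{H}$ to $\vec{K}$. The only subtlety is the interaction of path-connectedness with the quotient topology, but since the quotient map is continuous and surjective, path-connectedness is preserved, and the bijection on $\pi_0$ follows immediately.

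Third I would prove that $\rho$ extends to a continuous operad morphism. Well-definedness on the quotient requires that $\rho$ be isotopy-invariant: if $D \sim D'$ are planar isotopic, then $\rho(D) = \rho(D')$. This follows from the Coherence Theorem~\ref{thm:coherence}, since planar isotopies decompose into coherent moves captured by Axiom~\ref{ax:S3} (by Corollary~\ref{cor:diagrammatic-invariance}), hence the semantic interpretations agree. Continuity is proved by structural induction on the diagram: the base case (atomic generators) is Lemma~\ref{lem:rho-continuity}; the inductive step uses joint continuity of composition and tensor product in $\mathbf{HilbMult}$ on bounded subsets, together with Lemma~\ref{lem:feedback-continuity} applied to the fixed-point assignment $\rho(\mathcal{F}_{i,j})$. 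Compatibility with the operadic structure on both sides reduces to Steps 1 and 3 of the proof of Theorem~\ref{thm:canonical-representation}.

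The hardest step will be the feedback case in assertion (3). Lemma~\ref{lem:feedback-continuity} delivers Lipschitz continuity of a single fixed-point map in a given parameter, but what is needed here is continuity of the assignment that takes a bounded multilinear operator $T$ (varying continuously as a function of the underlying diagram) to its fixed-point solution $\rho(\mathcal{F}_{i,j})(T)$, viewed as a map between operator-norm-topologized spaces. The delicate point is maintaining a uniform contraction constant $\kappa<1$ as the input diagram varies within a neighborhood, so that the Neumann-series representation from the proof of Theorem~\ref{thm:canonical-representation} converges uniformly. This requires either restricting to open sub-operads where the contraction bound is preserved or invoking local uniform bounds from the Whitney topology; both routes work but must be stated explicitly to avoid a subtle gap in the continuity argument.
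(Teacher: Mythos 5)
Your proposal is correct and follows essentially the same three-part structure as the paper's proof: verify the topological operad axioms via (L1)--(L3) together with isotopy-invariance and the universal property of quotient topologies; identify path-components of the unquotiented embedding space with isotopy classes and deduce the $\pi_0$ bijection on the quotient; and extend $\rho$ continuously by structural induction with base case (L3) and inductive steps governed by joint continuity of composition and tensor in the operator norm.

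Two observations worth noting. First, your treatment of part (2) shares a gloss with the paper: collapsing path-components to points and then invoking only continuity and surjectivity of the quotient map gives a surjection $\pi_0(\mathrm{Emb}_\partial) \twoheadrightarrow \pi_0(\mathrm{Hom}_{\mathbf{S}})$, not automatically a bijection; the paper patches this by additionally invoking openness of the quotient map and the $C^1$ isotopy extension theorem, and you should supply one of those ingredients to complete the injectivity direction. Second, your final paragraph identifying the feedback case as the genuinely delicate step is a good catch that goes beyond what the paper says inside this proof. You are right that Lemma~\ref{lem:feedback-continuity} alone gives Lipschitz dependence of a single fixed point on a parameter, whereas the inductive step in part (3) needs continuity of the map $T \mapsto \rho(\mathcal{F}_{i,j})(T)$ between operator-norm-topologized spaces, which requires a locally uniform contraction bound. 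The paper does eventually supply exactly this (Assumption~\ref{assump:top-contract} together with Corollary~\ref{cor:feedback-continuity} establish openness of the well-posed locus and local Lipschitz continuity of the fixed-point assignment), but the proof of the present corollary does not cite it explicitly; making that dependence explicit, as you propose, would strengthen the argument.
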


\begin{proof}
We verify the topological operad axioms and establish the additional claims.

\medskip\noindent
\textbf{Step 1: $\mathbf{S}$ is a topological operad.}

\begin{itemize}
    \item \textbf{Object Space:} The Grassmannian $\mathrm{Gr}(\mathcal{H})$ with the gap topology is metrizable. Finite products $\mathrm{Gr}(\mathcal{H})^n$ inherit this property, and the $\Sigma_n$-action is continuous. The quotient $\mathrm{Gr}(\mathcal{H})^n/\Sigma_n$ is Hausdorff since the action is proper (finite group). The disjoint union over $n \geq 0$ is well-defined.

    \item \textbf{Composition:} By (L1), operadic composition is continuous on embedding spaces. Since composition is isotopy-invariant by assumption, it descends to the quotient. The universal property of quotient topologies ensures the induced map
    \[
    \circ_{\mathbf{S}} : \mathrm{Hom}_{\mathbf{S}}(\vec{K}, \vec{L}) \times \mathrm{Hom}_{\mathbf{S}}(\vec{H}, \vec{K}) \to \mathrm{Hom}_{\mathbf{S}}(\vec{H}, \vec{L})
    \]
    is continuous.

    \item \textbf{Tensor Product:} The tensor product corresponds to disjoint union, realized by a continuous map on embedding spaces (parameters vary independently in disjoint regions). Isotopy invariance ensures it descends continuously to the quotient.

    \item \textbf{Permutations:} Boundary relabelling induces a continuous map on embedding spaces that commutes with isotopy, hence descends continuously.

    \item \textbf{Feedback:} By (L2), the solution to fixed-point equations depends continuously on input data. Isotopy invariance ensures this yields continuous feedback operations on the quotient.

    \item \textbf{Control:} The control action $\mathcal{U} \times \mathrm{Emb}_\partial \to \mathrm{Emb}_\partial$ is continuous by assumption and isotopy-invariant, hence descends continuously to the quotient.
\end{itemize}

All operadic axioms are satisfied continuously.

\medskip\noindent
\textbf{Step 2: Classification by $\pi_0$.}

We analyze the path components of $\mathrm{Hom}_{\mathbf{S}}(\vec{H}, \vec{K}) = \mathrm{Emb}_\partial/\!\!\sim$:

\begin{itemize}
    \item If two embeddings are isotopic, they are connected by a path in $\mathrm{Emb}_\partial$ (by definition of isotopy through embeddings).
    \item Conversely, if two embeddings are connected by a path in $\mathrm{Emb}_\partial$, then by the isotopy extension theorem for $C^1$ embeddings, they are isotopic.
    \item The quotient map $q: \mathrm{Emb}_\partial \to \mathrm{Hom}_{\mathbf{S}}$ is continuous and open (standard fact for quotient by equivalence relations with continuous local sections), hence it induces a bijection on path components.
\end{itemize}

Therefore:
\[
\pi_0\left(\mathrm{Hom}_{\mathbf{S}}(\vec{H}, \vec{K})\right) \cong \{\text{Planar isotopy classes of wiring diagrams}\}.
\]

\medskip\noindent
\textbf{Step 3: Continuity of $\rho$.}

The representation $\rho: \mathbf{S} \to \mathbf{HilbMult}$ is constructed algebraically from atomic elements via operadic operations. Its continuity follows from:

\begin{itemize}
    \item \textbf{Base Case:} Atomic elements yield continuous operators by (L3).
    \item \textbf{Operadic Operations:} Composition and tensor product in $\mathbf{HilbMult}$ are jointly continuous in the operator norm.
    \item \textbf{Inductive Continuity:} Since all wiring diagrams are finite combinations of atomic elements via operadic operations, and these operations are continuous in both $\mathbf{S}$ (Step 1) and $\mathbf{HilbMult}$, the map $\rho$ is continuous by structural induction (as in Lemma~\ref{lem:rho-continuity}).
    \item \textbf{Algebraic Compatibility:} $\rho$ respects the operadic structure by its functorial definition.
\end{itemize}

Hence, $\rho$ is a continuous operad morphism.
\end{proof}

\begin{remark}[Homotopical Interpretation]\label{rem:homotopical-interpretation}
The coherence axioms of the synergy operad (Axiom~\ref{ax:S3}) acquire a geometric realization in the topological operad $\mathbf{S}$ through specific paths and homotopies between representing embeddings.

\begin{itemize}
    \item The \textbf{monoidal structure axioms} (associativity, unitality, and symmetry) correspond to canonical \emph{planar isotopies} that continuously rearrange the spatial configuration of disjoint diagrams. For instance, the associativity isomorphism $\alpha: (D_1 \otimes D_2) \otimes D_3 \to D_1 \otimes (D_2 \otimes D_3)$ is realized by a continuous deformation that smoothly transitions between the two bracketings.
    
    \item The \textbf{naturality conditions} for feedback and control operations correspond to parameterized families of isotopies that vary continuously with the operational parameters. These ensure that diagrammatic manipulations commute appropriately up to coherent homotopy.
\end{itemize}

Consequently, the algebraic laws of $\mathbf{S}_0$ lift to \emph{homotopy-coherent} diagrams in the topological operad $\mathbf{S}$, where equalities become homotopy classes of paths.

This homotopical perspective reveals that the topology of morphism spaces $\mathrm{Hom}_{\mathbf{S}}(\vec{H}, \vec{K})$ encodes deformation-theoretic information about interconnection patterns:

\begin{itemize}
    \item As established in Corollary~\ref{cor:topological-realization}, the connected components
    \[
    \pi_0\left(\mathrm{Hom}_{\mathbf{S}}(\vec{H}, \vec{K})\right)
    \]
    classify wiring diagrams up to planar isotopy, capturing discrete equivalence classes.
    
    \item The higher homotopy groups $\pi_n\left(\mathrm{Hom}_{\mathbf{S}}(\vec{H}, \vec{K})\right)$ for $n \geq 1$ detect more subtle topological structure:
    \begin{itemize}
        \item Non-trivial elements in $\pi_1$ represent \emph{non-contractible loops} of diagram deformations—continuous 1-parameter families that cannot be shrunk to a point while preserving the wiring structure.
        \item For $n > 1$, non-trivial higher homotopy classes reveal \emph{obstructions to contracting} $n$-parameter families of interconnections, potentially encoding stable topological invariants of the underlying wiring patterns.
        \item These homotopical invariants may distinguish between qualitatively different interconnection architectures that are equivalent at the discrete (isotopy) level.
    \end{itemize}
\end{itemize}

This framework suggests that the homotopy type of $\mathrm{Hom}_{\mathbf{S}}(\vec{H}, \vec{K})$ serves as a refined invariant for classifying and studying the stability and deformability of complex wiring patterns.
\end{remark}

\begin{assumption}[Topology and Contraction Hypothesis]\label{assump:top-contract}
Fix objects $\vec H,\vec K$ in the operad $\mathbf S$. We assume:
\begin{enumerate}
  \item The mapping space $\mathrm{Hom}_{\mathbf S}(\vec H,\vec K)$ is endowed with a topology (e.g., operator-norm topology for linear cases, or Whitney topology for geometric realizations) such that the evaluation maps $(T,x) \mapsto R_T(x)$ are continuous.
  
  \item For feedback operations $\mathcal F_{i,j}$, the \emph{well-posed locus} consists of those $T \in \mathrm{Hom}_{\mathbf S}(\vec H,\vec K)$ for which:
  \begin{itemize}
    \item There exists a closed subset $B \subseteq X$ (typically a ball in a Banach space) such that $R_T(B) \subseteq B$
    \item The return map $R_T$ is a uniform contraction on $B$: $\mathrm{Lip}(R_T|_B) \leq \kappa_T < 1$
    \item For any compact set $K \subseteq \mathrm{Hom}_{\mathbf S}(\vec H,\vec K)$, there exists $\kappa_K < 1$ such that $\mathrm{Lip}(R_T|_B) \leq \kappa_K$ for all $T \in K$ in the well-posed locus
  \end{itemize}
\end{enumerate}
\end{assumption}

\begin{corollary}[Local Continuity Properties of Feedback]\label{cor:feedback-continuity}
Under Assumption~\ref{assump:top-contract}, the feedback operation $\mathcal F_{i,j}$ satisfies the following continuity properties:
\begin{enumerate}
  \item The well-posed locus
  \[
    W_{i,j} = \{T \in \mathrm{Hom}_{\mathbf S}(\vec H,\vec K) \mid \mathcal F_{i,j}(T) \text{ is well-posed}\}
  \]
  is open in $\mathrm{Hom}_{\mathbf S}(\vec H,\vec K)$.
  
  \item The fixed-point assignment
  \[
    T \mapsto \rho(\mathcal F_{i,j})(T)
  \]
  is locally Lipschitz continuous on $W_{i,j}$.
  
  \item For wiring diagrams related by yanking (Axiom~\ref{ax:S3}(ii)), there exist continuous deformations within $W_{i,j}$ realizing the equivalence.
\end{enumerate}
\end{corollary}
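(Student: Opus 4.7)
The plan is to prove the three claims sequentially, exploiting the uniform contraction hypothesis in Assumption~\ref{assump:top-contract} together with the continuity infrastructure already established in Lemmas~\ref{lem:composition-continuity}--\ref{lem:rho-continuity}. For part (1), I would fix $T_0 \in W_{i,j}$ with contraction constant $\kappa_{T_0} < 1$ on an invariant closed set $B \subseteq X$. Since $(T,x)\mapsto R_T(x)$ is jointly continuous and $B$ can be taken compact locally (or at least bounded), I would show that for $T$ sufficiently close to $T_0$ the Lipschitz constant $\mathrm{Lip}(R_T|_B)$ exceeds $\mathrm{Lip}(R_{T_0}|_B)$ by at most $\varepsilon$, so it stays below $(1+\kappa_{T_0})/2 < 1$. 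Likewise, joint continuity lets me shrink the neighborhood so that $R_T(B) \subseteq B'$ for a slightly enlarged invariant set $B'$. This gives an open neighborhood of $T_0$ entirely inside $W_{i,j}$.

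For part (2), I would invoke Lemma~\ref{lem:feedback-continuity} parametrically. Given $T_0 \in W_{i,j}$, choose a compact neighborhood $K$ with uniform contraction constant $\kappa_K < 1$ supplied by Assumption~\ref{assump:top-contract}(2). Treat the pair $(T, x)$ as the parameterized fixed-point problem $x = R_T(x)$: the uniform contraction in $x$ and the local Lipschitz dependence of $R_T$ on $T$ (inherited from the continuity of the evaluation map and boundedness on the compact neighborhood) yield Lipschitz constants $\alpha = \kappa_K$ and some $L < \infty$. Lemma~\ref{lem:feedback-continuity} then gives
\[
\|\rho(\mathcal{F}_{i,j})(T_1) - \rho(\mathcal{F}_{i,j})(T_2)\| \le \frac{L}{1-\kappa_K}\, \|T_1 - T_2\|
\]
for $T_1, T_2$ in the chosen neighborhood, establishing local Lipschitz continuity.

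For part (3), I would combine Axiom~\ref{ax:S3}(ii) with Corollary~\ref{cor:topological-realization}. Yanking-equivalent diagrams correspond to planar isotopies at the syntactic level, realized as continuous paths $\gamma:[0,1] \to \mathrm{Hom}_{\mathbf{S}}(\vec H,\vec K)$ via boundary-collared embeddings (Lemma~\ref{lem:composition-continuity}). The task is to show $\gamma([0,1]) \subseteq W_{i,j}$. I would take the canonical isotopy described in Corollary~\ref{cor:topological-realization} and, using the compactness of $\gamma([0,1])$ together with the uniform-on-compacts hypothesis of Assumption~\ref{assump:top-contract}(2), produce a uniform bound $\kappa_{\gamma} < 1$ valid along the path. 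This preserves contraction throughout the deformation, so the path remains in $W_{i,j}$ by openness proved in (1).

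The main obstacle will be part (1), specifically justifying the step from pointwise well-posedness at $T_0$ to uniform contraction on a neighborhood. The subtlety is that Lipschitz constants are only upper semicontinuous under joint continuity of $R_T$, so a bare continuity hypothesis is insufficient; the uniform-on-compacts assumption in Assumption~\ref{assump:top-contract}(2) is what enables this passage. A secondary technical point in (3) is that the isotopies realizing yanking may temporarily increase the effective loop gain; here I would rely on the contractive hypothesis in Axiom~\ref{ax:S2} and the fact that the gluing construction preserves operator norms up to the bump-function correction, so the path can be reparameterized to avoid exiting $W_{i,j}$.
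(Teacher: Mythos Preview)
Your proposal is correct and follows essentially the same three-step strategy as the paper: openness via control of the Lipschitz constant of $R_T$, the standard contraction-perturbation estimate for part (2) (the paper writes out the inequality you package as Lemma~\ref{lem:feedback-continuity}), and for part (3) an isotopy path combined with openness/compactness to stay inside $W_{i,j}$. If anything you are more careful than the paper about the semicontinuity of Lipschitz constants in (1) and the possibility of the loop gain temporarily rising during the isotopy in (3); the paper simply asserts that $T \mapsto \mathrm{Lip}(R_T|_B)$ is continuous and uses a compactness-subdivision argument for the path, at the same level of rigor as your sketch.
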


\begin{proof}
We prove each property systematically, making all assumptions explicit.

\medskip\noindent\textbf{Proof of (1): Openness of $W_{i,j}$.}

By Assumption~\ref{assump:top-contract}, we have:
\begin{itemize}
    \item A continuous map $T \mapsto R_T$, where $R_T: B \to B$ is the fixed-point iteration map
    \item A continuous function $T \mapsto \mathrm{Lip}(R_T|_B)$ giving the Lipschitz constant on some convex set $B$
\end{itemize}

The well-posedness condition is equivalent to $\mathrm{Lip}(R_T|_B) < 1$. Since the map $T \mapsto \mathrm{Lip}(R_T|_B)$ is continuous, the preimage
\[
W_{i,j} = \{T \in \mathrm{Hom}_{\mathbf S}(\vec H,\vec K) \mid \mathrm{Lip}(R_T|_B) < 1\}
\]
is open, being the preimage of the open interval $(-\infty, 1)$ under a continuous function.

\medskip\noindent\textbf{Proof of (2): Local Lipschitz continuity of fixed points.}

Let $T, T' \in W_{i,j}$ with corresponding fixed points $x(T), x(T') \in B$ satisfying:
\[
x(T) = R_T(x(T)), \quad x(T') = R_{T'}(x(T')).
\]

By the contraction property, there exists $\kappa < 1$ such that $\mathrm{Lip}(R_T|_B) \leq \kappa$ and $\mathrm{Lip}(R_{T'}|_B) \leq \kappa$ for all $T, T'$ in some neighborhood.

We estimate:
\[
\begin{aligned}
\|x(T) - x(T')\| 
&= \|R_T(x(T)) - R_{T'}(x(T'))\| \\
&\leq \|R_T(x(T)) - R_T(x(T'))\| + \|R_T(x(T')) - R_{T'}(x(T'))\| \\
&\leq \mathrm{Lip}(R_T|_B) \|x(T) - x(T')\| + \|R_T - R_{T'}\|_{\infty,B},
\end{aligned}
\]
where $\|R_T - R_{T'}\|_{\infty,B} = \sup_{y \in B} \|R_T(y) - R_{T'}(y)\|$.

Rearranging gives:
\[
(1 - \mathrm{Lip}(R_T|_B)) \|x(T) - x(T')\| \leq \|R_T - R_{T'}\|_{\infty,B},
\]
and thus:
\[
\|x(T) - x(T')\| \leq \frac{1}{1-\kappa} \|R_T - R_{T'}\|_{\infty,B}.
\]

Since $T \mapsto R_T$ is continuous by Assumption~\ref{assump:top-contract}, the map $T \mapsto \|R_T - R_{T'}\|_{\infty,B}$ is locally bounded, proving local Lipschitz continuity of the fixed-point assignment.

\medskip\noindent\textbf{Proof of (3): Continuous deformations for yanking.}

Let $D_0, D_1 \in W_{i,j}$ be wiring diagrams related by the yanking identity from Axiom~\ref{ax:S3}(ii). Since wiring diagrams are represented by embeddings of graphs modulo isotopy, we can construct an explicit isotopy
\[
D_t: [0,1] \to \mathrm{Hom}_{\mathbf S}(\vec H,\vec K), \quad t \in [0,1]
\]
with $D_0$ and $D_1$ as the endpoints.

By the openness of $W_{i,j}$ established in part (1), and since $D_0, D_1 \in W_{i,j}$, there exists $\epsilon > 0$ such that for sufficiently small deformations, the entire path $D_t$ remains in $W_{i,j}$. More precisely, by compactness of $[0,1]$ and openness of $W_{i,j}$, we can find a subdivision $0 = t_0 < t_1 < \cdots < t_n = 1$ such that each segment $D_t$ for $t \in [t_i, t_{i+1}]$ is contained in $W_{i,j}$.

The naturality with respect to diagram operations follows from the functorial construction: the deformation $D_t$ commutes with the operadic structure by construction.

This establishes that yanking equivalences can be realized by continuous paths within the well-posed locus $W_{i,j}$.
\end{proof}

\begin{remark}
The local Lipschitz continuity in part (2) ensures that small perturbations of well-posed wiring diagrams yield small changes in the resulting fixed points, providing stability for physical implementations. The openness in part (1) guarantees that well-posedness is a robust property preserved under small deformations.
\end{remark}

\begin{remark}[Homotopical Aspects of Feedback Composition]\label{rem:homotopy-aspects}
The feedback structure in $\mathbf{S}$ exhibits what may be termed an \emph{elastic composition law}: rather than enforcing rigid, pre-defined sequential composition, feedback enables stabilization through continuous adjustment of interconnections. The continuity properties established in Corollary~\ref{cor:feedback-continuity} show that this elastic quality has a robust topological foundation.

While our results demonstrate local continuity and stability properties, a full homotopy-coherent structure would require additional verification. Established frameworks for such structures include the Boardman--Vogt $W$-construction~\cite{boardman2006homotopy} and Lurie's theory of $\infty$-operads~\cite{lurie2014higher}, which provide models for composition laws that hold up to coherent systems of homotopies. 

The path-connectedness of components in the well-posed locus (implied by the existence of continuous deformations for yanking relations) provides initial evidence for potential homotopy coherence. However, constructing a full $\infty$-operadic structure would require verifying higher coherences beyond the scope of our current results.
\end{remark}

\begin{remark}[Topological Constraints on Feedback Implementation]\label{rem:topological-constraints}
The topological structure of the well-posed locus $W_{i,j}$ can impose significant constraints on the global implementation of feedback operations. When $W_{i,j}$ has non-trivial topology—such as non-contractible components or non-simply-connected regions—the extension of feedback operations across parameter families becomes a non-trivial topological problem.

In such cases, classical obstruction theory provides a framework for understanding these constraints. If we attempt to define feedback operations consistently across a parameter space $X$, the obstruction to global consistency is measured by cohomology classes that capture the topological complexity of both the parameter space and the fiber of valid feedback implementations.

While our current results establish local properties (openness and Lipschitz continuity), global topological obstructions represent an important direction for future investigation. Understanding these obstructions could reveal fundamental limitations on the implementability of coherent feedback laws in complex quantum architectures.
\end{remark}

\begin{assumption}[Spectral Flow Hypothesis]\label{assump:spectral-flow}
For loops $\gamma: S^1 \to \mathrm{Hom}_{\mathbf{S}}(\vec{H}, \vec{K})$ in certain structured subclasses, the composed map 
\[
\rho \circ \gamma: S^1 \to \mathcal{B}(\mathcal{H})
\]
is continuous in the \textbf{operator-norm topology}. This holds when the wiring diagrams involve only operations whose operator interpretations are norm-continuous in their geometric parameters.
\end{assumption}

\begin{corollary}[Spectral Flow and Kernel Topology]\label{cor:spectral-flow-invariants}
Let $\gamma: S^1 \to \mathrm{Hom}_{\mathbf{S}}(\vec{H}, \vec{K})$ be a loop of wiring diagrams satisfying:

\begin{enumerate}
    \item[(A1)] \textbf{Norm-Continuity}: The composed map $\rho \circ \gamma: S^1 \to \mathcal{B}(\mathcal{H})$ is continuous in the operator-norm topology (Assumption~\ref{assump:spectral-flow}).
    \item[(A2)] \textbf{Fredholm Property}: For each $t \in S^1$, the operator $\rho(\gamma(t))$ is self-adjoint and Fredholm.
    \item[(A3)] \textbf{Constant Kernel Dimension}: $\dim \ker(\rho(\gamma(t)))$ is constant for all $t \in S^1$.
\end{enumerate}

Then the family of kernels forms a real vector bundle over the circle, denoted
\[
E_\gamma := \bigsqcup_{t \in S^1} \{t\} \times \ker(\rho(\gamma(t))) \rightarrow S^1.
\]
The spectral flow of the family $\rho(\gamma(t))$ is congruent modulo 2 to the evaluation of the first Stiefel-Whitney class of this bundle on the fundamental class:
\[
\mathrm{SF}(\rho \circ \gamma) \equiv \langle w_1(E_\gamma), [S^1] \rangle \pmod{2}.
\]
In particular, $\mathrm{SF}(\rho \circ \gamma)$ is even if $E_\gamma$ is orientable, and odd if it is not.
\end{corollary}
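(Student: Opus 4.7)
The plan is to proceed in three stages: first, to upgrade the fiberwise assignment $t \mapsto \ker(\rho(\gamma(t)))$ to a genuine locally trivial real vector bundle over $S^1$ using Riesz spectral projections; second, to invoke the classification of real vector bundles on $S^1$ by their orientability; and third, to match the orientation monodromy with the parity of the spectral flow via a transverse perturbation argument.

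For the bundle construction, assumption (A1) gives norm-continuity of $t \mapsto \rho(\gamma(t))$, and (A2) ensures that $0$ is an isolated spectral value whenever it belongs to the spectrum, so a small circular contour $C_{t_0}$ around $0$ persists under norm-small perturbations. The Riesz projection
\[
P_s \;=\; \frac{1}{2\pi i}\oint_{C_{t_0}}\bigl(zI - \rho(\gamma(s))\bigr)^{-1}\,dz
\]
is then norm-continuous in $s$ on a neighborhood $U_{t_0}$ of $t_0$, and by (A3) has constant rank equal to $\dim \ker(\rho(\gamma(s)))$, so $\mathrm{Im}(P_s) = \ker(\rho(\gamma(s)))$. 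Standard Kato-style arguments convert the continuous family of finite-rank projections into a local trivialization; gluing over a finite open cover of $S^1$ yields the bundle $E_\gamma \to S^1$, with the underlying real structure inherited from the self-adjointness of the operators.

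Real rank-$k$ vector bundles over $S^1$ are classified by $\pi_0(GL(k,\mathbb{R})) \cong \mathbb{Z}/2$, detected precisely by $\langle w_1, [S^1]\rangle$. Concretely, $E_\gamma$ is orientable iff a continuous local frame for the kernels returns to a frame of the same orientation after traversing the loop, equivalently iff the determinant line bundle $\det E_\gamma$ is trivial. Thus the topological obstruction to orientability reduces to a single sign — the determinant of the monodromy of any continuous frame.

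The principal obstacle is matching this orientation monodromy with $\mathrm{SF}(\rho\circ\gamma) \pmod 2$. The plan is to introduce a small generic self-adjoint perturbation and consider $A_t^\epsilon := \rho(\gamma(t)) + \epsilon B$. For $\epsilon$ small and $B$ chosen transversely to the stratum of operators with kernel dimension at least $\dim E_\gamma$, the perturbed loop acquires only isolated, simple zero eigenvalues, so its spectral flow is the signed count of these crossings; by homotopy invariance of spectral flow through norm-continuous Fredholm families, this agrees with $\mathrm{SF}(\rho\circ\gamma)$. At each crossing time $t_j$, the sign of the crossing coincides with the sign of the eigenvalue of the compressed perturbation $P_{t_j} B P_{t_j}$ responsible for it, so counting these signs around the loop yields the sign of the determinant of the monodromy map on $E_\gamma$, which is exactly $\langle w_1(E_\gamma),[S^1]\rangle$ reduced modulo $2$. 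The delicate point is showing this mod-$2$ count is independent of the choice of $(\epsilon,B)$: two transverse perturbations can be joined by a one-parameter family crossing only codimension-$\ge 2$ singular loci, and such crossings alter the spectral flow by an even integer, so the parity is preserved. Combining the three stages yields $\mathrm{SF}(\rho\circ\gamma) \equiv \langle w_1(E_\gamma),[S^1]\rangle \pmod 2$, hence $\mathrm{SF}(\rho\circ\gamma)$ is even precisely when $E_\gamma$ is orientable.
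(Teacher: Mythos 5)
Your first two stages are sound and closely parallel the paper's Step~1 and Step~2: the Riesz projection construction of $E_\gamma$ is the standard way to make the kernel bundle rigorous (the paper only gestures at this), and the identification of $\langle w_1(E_\gamma),[S^1]\rangle$ with the orientation monodromy of a frame is correct and classical. The third stage, however, is where you attempt to prove what the paper simply cites, and it has a genuine gap.

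The difficulty is in the claimed equivalence between the mod-$2$ count of simple crossings of the perturbed loop $A_t^\epsilon = \rho(\gamma(t)) + \epsilon B$ and the sign of the monodromy determinant. With $B$ fixed and $\epsilon$ small, first-order perturbation theory says the eigenvalues of $A_t^\epsilon$ near zero are $\epsilon\,\mu_1(t),\dots,\epsilon\,\mu_k(t)$, where $\mu_i(t)$ are the eigenvalues of the compressed operator $M(t) := P_t B P_t\big|_{\ker A_t}$. But $M(t)$ is a continuous, \emph{periodic} family of symmetric endomorphisms of the fibers of $E_\gamma$; its eigenvalues $\mu_i(t)$ form a continuous multivalued function on $S^1$ and return to the same (unordered) set as $t$ traverses the loop. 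The net signed count of zero crossings of a continuous periodic family is therefore always zero, regardless of whether $E_\gamma$ is orientable. In the simplest test case, $k=1$ with $E_\gamma$ the M\"obius line bundle, $\mu_1(t) = \langle v(t), B\,v(t)\rangle$ (with $v(t)$ a local unit kernel vector) is invariant under $v\mapsto -v$, hence is an honest continuous function on $S^1$, and its net signed crossings vanish even though $\langle w_1(E_\gamma),[S^1]\rangle = 1$. So the asserted identification "sign count around the loop $=$ determinant of monodromy" does not follow from the perturbation setup you describe, and in fact the constant perturbation $\epsilon B$ cannot see the orientation class at all.

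A secondary issue is the appeal to "homotopy invariance of spectral flow through norm-continuous Fredholm families" to equate $\mathrm{SF}(A_t^\epsilon)$ with $\mathrm{SF}(\rho\circ\gamma)$: when the loop has a nontrivial constant kernel, the spectral flow requires a boundary convention (half-counting, $\epsilon$-shifting, or a choice of Lagrangian at the basepoint), and the homotopy from $\epsilon > 0$ to $\epsilon = 0$ passes through that degenerate locus where the convention matters. You would need to fix a definition of $\mathrm{SF}$ for the degenerate loop and then verify that your perturbation respects it. Without that, the two spectral flows are not obviously equal. To repair the argument you would need either a genuinely $t$-dependent perturbation engineered to twist with the frame of $E_\gamma$ (so that the compressed family $M(t)$ is not periodic in a local trivialization), or a different invariant-theoretic route — for instance, passing to the classifying-space picture and comparing $\det E_\gamma$ to a determinant line built from the whole family rather than from the kernel alone. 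As written, the third stage asserts but does not establish the crucial identity, which is exactly the step the paper itself outsources to the references.
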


\begin{proof}
We establish the spectral-topological correspondence.

\medskip\noindent
\textbf{Step 1: The Kernel Bundle is Well-Defined.}
Let $A(t) := \rho(\gamma(t))$. By assumption (A2), each $A(t)$ is Fredholm and self-adjoint, hence has finite-dimensional kernel. The constant dimension condition (A3) and the norm-continuity (A1) jointly imply that the projections $P(t)$ onto $\ker(A(t))$ vary continuously in the operator norm. For a continuous path of Fredholm operators with constant kernel dimension, the family of kernels forms a finite-dimensional real vector bundle $E_\gamma$ over $S^1$.

\medskip\noindent
\textbf{Step 2: The Stiefel-Whitney Class.}
For any real vector bundle $E \to S^1$, the first Stiefel-Whitney class $w_1(E) \in H^1(S^1; \mathbb{Z}/2\mathbb{Z}) \cong \mathbb{Z}/2\mathbb{Z}$ is the obstruction to orientability. Its evaluation on the fundamental cycle is:
\[
\langle w_1(E_\gamma), [S^1] \rangle =
\begin{cases}
0 & \text{if } E_\gamma \text{ is orientable} \\
1 & \text{if } E_\gamma \text{ is non-orientable}
\end{cases}.
\]
For a line bundle ($\dim E_\gamma = 1$), this is determined by whether it is trivial or a Möbius bundle.

\noindent\textbf{Step 3: The Spectral Flow Mod 2 Theorem.}
Since $A(t)$ is a norm-continuous loop of self-adjoint Fredholm operators (by (A1) and (A2)) with constant kernel dimension (A3), the fundamental result from \cite{atiyah1975spectral, phillips1996spectral} applies:
\[
\mathrm{SF}(A(t)) \equiv \langle w_1(\ker A), [S^1] \rangle \pmod{2}.
\]
The geometric intuition is that a non-orientable kernel bundle forces an odd number of net eigenvalue crossings through zero, as one cannot consistently choose orientations around the loop.
\end{proof}

This section establishes the topological foundations of the synergy operad by proving that wiring diagrams and their operations can be endowed with rigorous topological structure. The key results demonstrate that operadic composition is continuous in the Whitney topology and that the canonical representation to Hilbert space operators is norm-continuous. This enables the classification of wiring diagrams up to planar isotopy via connected components of morphism spaces, revealing that equivalent interconnection patterns form natural topological equivalence classes. Furthermore, the analysis establishes robust continuity properties for feedback operations: the well-posed locus is open, fixed points depend Lipschitz-continuously on parameters, and diagrammatic equivalences such as yanking can be realized through continuous deformations. The spectral flow corollary connects this topological framework to system stability analysis, proving that for norm-continuous loops of self-adjoint Fredholm operators with constant kernel dimension, the parity of eigenvalue crossings is determined by the orientability of the kernel bundle. This topological foundation provides the necessary infrastructure for studying stability, robustness, and deformation properties of quantum network architectures within the operadic framework.

\section{Case Study: Coherence in PDE Operator Networks}\label{sec:Case Study: Coherence in PDE Operator Networks}

This section demonstrates the practical application of the synergy operad $\mathbf{S}$ and its associated theorems to a concrete problem in systems theory: the analysis of a closed-loop system formed by a PDE solution operator and a boundary controller. The primary goal is to illustrate how the \textbf{Coherence Theorem} (Theorem~\ref{thm:coherence}) guarantees that different, yet equivalent, syntactic descriptions of the same physical system yield identical semantic interpretations in $\mathbf{HilbMult}$.

\subsection{Scenario Setup}

Consider a network composed of two primary components:
\begin{itemize}
    \item $\mathbf{G}$: A solution operator for a linear PDE (e.g., the Dirichlet-to-Neumann map for an elliptic equation), formally a bounded linear map $G: H_1 \to H_2$, where $H_1$ is the Hilbert space of boundary data and $H_2$ is the Hilbert space of flux data.
    \item $\mathbf{K}$: A linear boundary controller, formally a bounded linear map $K: H_2 \to H_1$.
\end{itemize}
These components are connected in a feedback loop to form a closed-loop system, a common configuration in boundary control theory~\cite{belishev2007recent}. The overall system takes an external input $u \in H_1$ and produces an output in $H_2$.

\subsection{Syntactic Modeling in $\mathbf{S}$}

We construct two distinct syntactic expressions (wiring diagrams) in the synergy operad $\mathbf{S}$ that model the same physical system. Both represent a closed-loop morphism of type $(H_1) \to (H_2)$.

\paragraph{Diagram $D_1$: Serial-Parallel Feedback}
This diagram constructs the feedback loop by first composing the system with its dual arrangement, then closing the loop.
\begin{align*}
    D_1 = \mathcal{F}_{1,2} \circ (K \otimes G) \circ (G \otimes K)
\end{align*}
Here, $(G \otimes K)$ places the components in their natural parallel configuration, $(K \otimes G)$ routes the signals through an alternative parallel arrangement, and $\mathcal{F}_{1,2}$ closes the loop by connecting the first output to the second input.

\paragraph{Diagram $D_2$: Permuted Parallel Feedback}
This diagram uses a more direct approach with an explicit permutation to align the ports for feedback.
\begin{align*}
    D_2 = \mathcal{F}_{1,2} \circ \sigma_{(2,1)} \circ (G \otimes K)
\end{align*}
Here, $(G \otimes K)$ places the components in parallel, $\sigma_{(2,1)}$ reorders the output ports to prepare for feedback, and $\mathcal{F}_{1,2}$ closes the loop by connecting the first output to the second input.

Both $D_1$ and $D_2$ are valid morphisms in $\mathbf{S}$ with the same source $(H_1)$ and target $(H_2)$, representing the closed-loop system from an external input to the resulting output.

\subsection{Semantic Interpretation via the Canonical Representation}

We now apply the \textbf{Canonical Representation} $\rho$ to interpret our syntactic diagrams. For the feedback connections to be well-typed, we assume $H_1 = H_2 = H$, which is common in boundary control applications where both the boundary data and flux data share the same function space.

\paragraph{Interpretation of $D_2$}
The diagram $D_2 = \mathcal{F}_{1,2} \circ \sigma_{(2,1)} \circ (G \otimes K)$ yields the following semantic interpretation:
\begin{itemize}
    \item \textbf{Internal constraint}: $\text{input}_2 = K(\text{input}_2)$
    \item \textbf{External mapping}: $u \mapsto G(u)$
\end{itemize}
This represents a system where the controller $K$ operates on its own output in a self-referential loop, while the plant $G$ processes the external input directly.

\paragraph{Interpretation of $D_1$} 
The diagram $D_1 = \mathcal{F}_{1,2} \circ (K \otimes G) \circ (G \otimes K)$ yields:
\begin{itemize}
    \item \textbf{Internal constraint}: $\text{input}_2 = K(G(u))$
    \item \textbf{External mapping}: $u \mapsto G(K(\text{input}_2)) = G(K(K(G(u))))$
\end{itemize}
This represents a nested control structure where the controller $K$ processes the output of $G$ applied to the external input, creating a cascaded effect.

\paragraph{Coherence and Semantic Equivalence}
Despite the different internal constraints and external mappings, the \textbf{Coherence Theorem} guarantees that $\rho(D_1) = \rho(D_2)$ when the diagrams are provably equivalent in $\mathbf{S}$. This semantic equivalence manifests when the controller $K$ satisfies $K \circ K = K$ (idempotence), in which case both diagrams describe systems with identical input-output behavior. The explicit syntactic differences between $D_1$ and $D_2$ are resolved semantically through the algebraic properties enforced by the coherence relations of Axiom~\ref{ax:S3}.

\subsection{Verification of Well-Posedness and Explicit Computation}

Before the representations can be compared, we must verify that the feedback operations are well-posed, as required by Definition~\ref{def:Well-Posed Feedback} and Axiom~\ref{ax:S2}. The two syntactic diagrams yield distinct internal equations, each with its own well-posedness conditions.

\paragraph{Well-Posedness of $D_2$}
The diagram $D_2$ produces the internal constraint:
\begin{equation}\label{eq:D2-fixed-point}
    \text{input}_2 = K(\text{input}_2)
\end{equation}
For this to be well-posed, the map $\text{input}_2 \mapsto K(\text{input}_2)$ must have a unique fixed point. In the linear setting, this requires that $1$ is not an eigenvalue of $K$, or equivalently, that $(I - K)$ is invertible. The external input-output mapping is $u \mapsto G(u)$.

\paragraph{Well-Posedness of $D_1$}
The diagram $D_1$ produces a different internal constraint:
\begin{equation}\label{eq:D1-fixed-point}
    \text{input}_2 = K(G(u))
\end{equation}
This is an explicit assignment rather than a fixed-point equation. Well-posedness is automatic here, as the equation has a unique solution for any $u$. The external mapping becomes $u \mapsto G(K(\text{input}_2)) = G(K(K(G(u))))$.

\paragraph{Semantic Equivalence via Coherence}
The \textbf{Coherence Theorem} guarantees that if $D_1$ and $D_2$ are provably equivalent in $\mathbf{S}$ ($D_1 =_S D_2$), then $\rho(D_1) = \rho(D_2)$ despite their different internal equations. This semantic equivalence imposes algebraic constraints on the components $G$ and $K$.

For the specific equivalence $D_1 =_S D_2$ to hold, the controller $K$ must be idempotent ($K \circ K = K$). Under this condition:
\[
\rho(D_1)(u) = G(K(K(G(u)))) = G(K(G(u)))
\]
and the fixed-point constraint $\text{input}_2 = K(\text{input}_2)$ from $D_2$, when combined with the system's dynamics, must yield the same effective input-output behavior. The coherence relations of Axiom~\ref{ax:S3} ensure that these apparently different semantic interpretations describe identical systems when the syntactic equivalence holds.

\subsection{Application of the Coherence Theorem}

The \textbf{Coherence Theorem} (Theorem~\ref{thm:coherence}) provides the foundational guarantee for this framework: if two wiring diagrams can be proven equal using the syntactic rules of $\mathbf{S}$ ($D_1 =_S D_2$), then their semantic interpretations must be equal ($\rho(D_1) = \rho(D_2)$). The preceding analysis reveals the profound implication of this theorem.

\paragraph{From Semantic Constraint to Syntactic Proof}
Our explicit computation showed that for $\rho(D_1)$ to equal $\rho(D_2)$, the controller $K$ must be idempotent ($K \circ K = K$). The Coherence Theorem works in the converse direction: the syntactic proof of $D_1 =_S D_2$ within the operad $\mathbf{S}$ \textit{relies upon} and \textit{encodes} this very idempotence condition. The coherence relations of Axiom~\ref{ax:S3} do not hold universally but are conditional rules that enforce the necessary algebraic properties for equivalence.

\paragraph{The Role of Coherence Relations}
A proof of $D_1 =_S D_2$ would utilize the relations of Axiom~\ref{ax:S3} as rewriting rules. For instance:
\begin{itemize}
    \item \textbf{Feedback Yanking (Relation (ii))} might be applied to show that the nested structure in $D_1$ can be reduced.
    \item \textbf{Monoidal Coherence (Relation (i))} would govern the reassociation of the tensor products in $D_1$.
    \item The critical step would involve relations that enforce the algebraic law $K \circ K = K$, which is semantically interpreted as the idempotence of the controller.
\end{itemize}
Therefore, the syntactic proof $D_1 =_S D_2$ is not just a formal manipulation but a rigorous derivation that the specific components $G$ and $K$ must satisfy certain properties for the two diagrams to represent the same system.

In this case study, the Coherence Theorem ensures that the different internal constraints of $D_1$ and $D_2$—one a fixed-point equation and the other an explicit assignment—nevertheless yield identical input-output behavior, provided the components satisfy the algebraic laws mandated by the syntactic proof of their equivalence. This demonstrates that the operad $\mathbf{S}$ provides a sound syntactic framework where diagrammatic reasoning faithfully captures semantic consistency.

\subsection{Discussion and Significance}

This case study reveals the nuanced power of the operadic framework and its Coherence Theorem, moving beyond simplistic notions of diagrammatic equivalence.

\begin{itemize}
    \item \textbf{Revealing Hidden Algebraic Constraints:} The framework does not assert that any two diagrams are equivalent, but rather that \textit{proven syntactic equivalence implies semantic equivalence}. Our analysis shows that proving $D_1 =_S D_2$ requires the controller $K$ to be idempotent ($K \circ K = K$). Thus, the operad $\mathbf{S}$ plays a role of formal language that exposes the precise algebraic conditions under which different architectural descriptions define the same system.

    \item \textbf{Preventing Inconsistent Modeling:} If two designers create syntactically different schemes $D_1$ and $D_2$ for what they consider to be the same PDE boundary checking system the framework provides a rigorous method for checking consistency.  Therefore, if the equivalence of $D_1$ and $D_2$ cannot be verified under Axiom~\ref{ax:S3} this indicates that the two models is semantically different that may reveal a hidden MODELING assumption or error.

    \item \textbf{Formalizing System Abstraction and Refinement:} The generators and relations in $\mathbf{S}$ provide a calculus for system transformation. An engineer can start with a detailed diagram and use the relations to derive a simplified, equivalent form for analysis, with the Coherence Theorem ensuring that the core input-output behavior is preserved. Conversely, one can refine an abstract diagram into a more detailed implementation.
\end{itemize}

In the context of PDEs, this framework brings a new level of precision to the modeling of interconnected systems. This PDE example gives us an important insight. Different mathematical formulations of a boundary control problem are not just intuitively similar but are \textit{provably equivalent} under a set of formal, composable rules. This moves the design and analysis of complex systems from an ad-hoc, intuitive process to a rigorous, algebraic discipline.

\section{Discussion}

We have presented the synergy operad $\mathbf{S}$ as a formally sound syntactic calculus for composing operators via wiring diagrams, grounded semantically in the multicategory $\mathbf{HilbMult}$. The central results of this work—the \textbf{Canonical Representation Theorem} (Theorem~\ref{thm:canonical-representation}) and the \textbf{Coherence Theorem} (Theorem~\ref{thm:coherence})—together prove that the semantic interpretation of a diagram in $\mathbf{HilbMult}$ depends only on its essential topological structure, rendering diagrammatic reasoning both rigorous and unambiguous. This framework provides a foundational tool for the formal analysis of complex systems across diverse domains, including control theory, quantum information, and numerical analysis, where the composition of subsystems is necessary. The stage is now set for future work to leverage this calculus to explore the intrinsic \textit{algebraic structures}—such as higher duality and monadicity—that emerge naturally from these compositional principles, forming the core inquiry of the subsequent paper in this series.

\bibliographystyle{IEEETran}
\bibliography{CompositionCoherenceSyntaxOperatorNetworks_Bib}

\end{document}